\documentclass[11pt]{article}

\usepackage{mathpazo}

\usepackage{microtype}
\microtypesetup{expansion=false}
\renewcommand{\ge}{\geqslant}
\renewcommand{\le}{\leqslant}
\usepackage{amsmath,amssymb,amsthm,mathtools}
\allowdisplaybreaks
\usepackage{booktabs}
\usepackage{array}
\usepackage{longtable}
\usepackage{needspace}
\usepackage{enumitem}
\usepackage{hyperref}
\hypersetup{hypertexnames=false,colorlinks=true,linkcolor=blue,citecolor=blue,filecolor=blue,urlcolor=blue,pdftitle={Certifying non-existence of integer points on curves},pdfauthor={Bogdan Grechuk and Eugene Go}}
\usepackage{xcolor}

\newtheorem{theorem}{Theorem}
\newtheorem{proposition}[theorem]{Proposition}

\newtheorem{corollary}[theorem]{Corollary}
\theoremstyle{definition}
\newtheorem{assumption}{Assumption}

\newtheorem{example}[theorem]{Example}
\newtheorem{problem}[theorem]{Problem}

\newcommand{\Z}{\mathbb Z}
\newcommand{\Q}{\mathbb Q}

\newcommand{\leg}[2]{\left(\frac{#1}{#2}\right)}
\newcommand{\Res}{\operatorname{Res}}
\newcommand{\Norm}{\operatorname{N}}

\newcommand{\edgerow}[2]{\mbox{$\mathbf{#1}{:}$}\nobreak\hspace{0.35em}$#2$\hspace{1.15em plus .3em minus .15em}\allowbreak}

\title{\vspace{-1.5cm}{\bfseries Certifying non-existence of integer points on curves}}
\author{Bogdan Grechuk and Eugene Go}
\date{12 October 2026}

\begin{document}
\maketitle

\begin{abstract}
	We present an elementary but powerful method for proving that a two-variable equation has no integer solutions. The method produces quadratic identities which every integer solution must satisfy. Once these identities are established, a standard quadratic reciprocity argument finishes the proof. We illustrate the power of the method by resolving $25$ out of $30$ two-variable equations of size up to $34$ for which the problem of integer solubility remained open. We also resolve the remaining $5$ equations by deeper methods, and further determine the integer solubility of all two-variable equations of size up to $35$ and of length up to $12$, with two exceptions. Before this work, there were $211$ equations in this category that resisted the previously known methods, and we proved that $209$ of them have no integer solutions, with all proofs formalized in Lean. In addition, we show that our method can be used to solve certain two-variable equations whose integer solution set is non-empty. 
\end{abstract}

\section{Introduction}\label{sec:intro}

In 2018, MathOverflow user Zidane asked\footnote{\url{https://mathoverflow.net/questions/316708}} what the smallest unsolved Diophantine equation is. To make the word ``smallest'' rigorous, Zidane defined the \emph{size} of a polynomial Diophantine equation 
\begin{equation}\label{eq:diofgen}
	P(x_1, \dots, x_n) = 0
\end{equation} 
where the polynomial $P$ consists of $k$ non-similar monomials with integer coefficients $a_1, \dots, a_k$ and degrees $d_1, \dots, d_k$, respectively, as
\begin{equation*}
	h(P)=\sum_{i=1}^k |a_i| 2^{d_i}.
\end{equation*}
To define the word ``unsolved'' rigorously, we may consider Hilbert's tenth problem.

\begin{problem}\label{prob:main}
	Given a Diophantine equation, determine whether it has an integer solution.
\end{problem}

Then ``the smallest unsolved equation'' is the equation of the form \eqref{eq:diofgen} with the smallest size $h(P)$ for which Problem \ref{prob:main} remains open. 

In 2021, the first author started the project of solving Problem \ref{prob:main} systematically for all equations \eqref{eq:diofgen} ordered by size $h$. This started as a recreational project, but it quickly went from elementary equations to much more interesting ones. All equations of size $h\leq 12$ either have easy-to-find integer solutions, or have no integer solutions for the obvious reason that they either have no real solutions, or no solutions modulo some integer $m$. Equation
$$
	x^2y + 2y + 1 = 0
$$ 
of size $h=13$ is the smallest equation solvable in the reals and modulo every integer, but not in integers. Such equations are called counterexamples to the integral Hasse principle. This equation, while solvable in the reals, has all its real solutions in the interval $-1<y<0$, which obviously implies integer insolubility. The smallest counterexample to the integral Hasse principle which has real solutions with $\min\{|x|,|y|\}$ unbounded is the equation
$$
	y^2=x^3-3
$$
of size $h=15$. To see that it has no integer solutions, we may either use the theory of elliptic curves, or rewrite it as
$$
	(x+1)(x^2-x+1)=y^2+4,
$$ 
use the law of quadratic reciprocity to show that the only positive divisors of the right-hand side are $0,1,2,4$ or $5$ modulo $8$, but then observe that the equation has no solutions modulo $8$ for which both factors of the left-hand side satisfy these conditions. The described methods allow us to proceed up to size $h=22$, where the equation
$$
	x^2 + y^2 - z^2 = xyz - 2
$$
requires a different method: Vieta jumping. For larger equations, deeper and deeper methods were required to prove their insolubility. By August 2021, the smallest equations for which Problem \ref{prob:main} remained open were equations
$$
	y^2-xyz+z^2=x^3-5
$$ 
and 
$$
	x(x^2+y^2+1)=z^3-z+1
$$
of size $h=29$. The first equation has been popularized under the name ``fruit Diophantine equation''. 
It was solved by Majumdar and Sury \cite{MR4610191} and has since led to several related works \cite{vaishya2022class, prakash2024generalized, sahoo2024generalized}. The second equation resisted all attempts to prove insolubility, and then Andrew R. Booker found a solution
$$
	(x,y,z)=(4280795, 4360815, 5427173).
$$
Then the frontier moved to $h=31$ and the equation
$$
	y^2-x^3y+z^3+3=0.
$$
In February 2024, Denis Shatrov used cubic reciprocity to prove that this equation has no integer solutions\footnote{\url{https://mathoverflow.net/questions/400714/}}. After this, four equations
$$
y^3-y=x^4+2x-2
$$
$$
y^3+y=x^4+x+4
$$
$$
y^3+xy=x^4+4
$$
$$
y^3+xy=x^4+x+2
$$
of size $h=32$ became the smallest open ones. And here, the project seemed to hit a wall. 

While the listed four equations became the smallest open equations only in 2024, they were known as the smallest open \emph{two-variable} equations from the start of the project in 2021; hence they had been open for over 5 years. In fact, the full list of $30$ two-variable equations of size $h\leq 34$, see Table \ref{tab:2varH34}, was discovered in 2021 and published in 2024 \cite[Section 7.3.1]{grechuk2024polynomial}, but every single equation from this list remained open until this work. 

\begin{table}
	\begin{center}
		\setlength{\tabcolsep}{4pt}
		\begin{tabular}{ |c|c|c|c|c|c| } 
			\hline
			$h$ & Equation & $h$ & Equation & $h$ & Equation \\ 
			\hline\hline
			$32$ & $y^3-y=x^4+2x-2$  & $34$ & $y^3-y^2=x^4+2x+2$ & $34$ & $y^3+xy-y=x^4-4$ \\ 
			\hline
			$32$ & $y^3+y=x^4+x+4$  & $34$ & $y^3+y^2=x^4+2x-2$  & $34$ & $y^3+xy=x^4+x+4$ \\ 
			\hline
			$32$ & $y^3+xy=x^4+4$ & $34$ & $y^3-y^2+xy=x^4+2$ & $34$ & $y^3+xy=x^4-x-4$ \\ 
			\hline
			$32$ & $y^3+xy=x^4+x+2$ & $34$ & $y^3+y=x^4+x+6$ & $34$ & $y^3+xy=x^4+x^2+2$ \\ 
			\hline
			$33$ & $y^3+y^2+xy=x^4+1$ & $34$ & $y^3+y=x^4+x-6$ & $34$ & $y^3-y^2+y=x^4+x+2$ \\ 
			\hline	
			$33$ & $y^3+xy=x^4+x+3$ & $34$ & $y^3+2y=x^4+x+4$ & $34$ & $y^3+y^2+y=x^4+x+2$ \\ 
			\hline	
			$33$ & $y^3+xy=x^4-x-3$ & $34$ & $y^3+y=x^4+2x+4$ & $34$ & $y^3+y^2-y=x^4+x+2$ \\ 
			\hline
			$33$ & $y^3+xy=x^4+x-3$ & $34$ & $y^3+y=x^4+2x-4$ & $34$ & $y^3+xy-y=x^4+x+2$ \\ 
			\hline
			$33$ & $y^3+xy+y=x^4-x+1$ & $34$ & $y^3+xy+y=x^4+4$ & $34$ & $y^3-y=x^4-x^2+x+2$ \\ 
			\hline
			$34$ & $y^3+y^2=x^4+x+4$ & $34$ & $y^3+xy-y=x^4+4$ & $34$ & $y^3-y=x^4-x^2+x-2$ \\ 
			\hline					
		\end{tabular}
		\caption{\label{tab:2varH34} Two-variable equations of size $h\leq 34$ for which Problem \ref{prob:main} was open.}
	\end{center} 
\end{table}

One difficulty is that, for equations in three variables listed in this introduction, a heuristic argument suggests that the number of solutions with $\max\{|x|,|y|,|z|\}\leq B$ ``should'' grow to infinity with $B$. Hence, either we will find a solution if we search long enough, or there should be an ``obstruction'' to the existence of solutions that prevents the heuristic from working. By discovering this obstruction, we may hope to prove that the equation is unsolvable in integers.

For the equations in Table \ref{tab:2varH34}, however, the heuristic suggests that the number of solutions should be finite. In fact, a classical 1929 theorem of Siegel \cite{siegel1929uber} states that every two-variable equation describing a curve of positive genus must have finitely many integer solutions. The proof, however, is not effective, and does not provide an algorithm for finding these solutions. In 1970, Baker and Coates \cite{baker1970integer} proved an effective version of Siegel's theorem for genus 1 curves. For genus $2$ and higher, the fundamental theorem of Faltings \cite{faltings1983endlichkeitssatze}, which confirms an earlier conjecture of Mordell, states that even the number of rational solutions is finite. With an effective version of this result, one could find all rational solutions, and check whether any of them are integers, thus resolving Problem \ref{prob:main}. However, an effective proof of Mordell's conjecture remains elusive. For genus $2$ curves, the general methods for computing all rational points are not available, but there are methods that work well for many families of equations \cite{stoll2026determining}. However, the equations in Table \ref{tab:2varH34} have genus $3$. They also do not belong to any of the families of two-variable equations for which the effective algorithms are known, such as equations quadratic in one of the variables \cite[Proposition 3.65]{grechuk2024polynomial}, super-elliptic equations \cite[Proposition 3.67]{grechuk2024polynomial}, or equations satisfying Runge's condition \cite[Section 3.4.7]{grechuk2024polynomial}. Hence, we had every reason to believe that the equations in Table \ref{tab:2varH34} are out of reach, until the effective Mordell conjecture is proved. And, even if it is proved, the resulting algorithm may still be impractical.

Given that the project of solving Problem \ref{prob:main} in the order of size $h$ has stalled at equations from Table \ref{tab:2varH34}, the focus recently moved to adjacent projects, such as considering only cubic equations \cite{grechuk2026cubic}, arranging the equations in a different order \cite{grechuk2026shortest}, or studying alternative problems such as deciding whether the solution set is finite or infinite \cite{grechuk2026polynomial}. 

This work develops a new method for certifying non-existence of integer points on curves. The method produces, for each equation $F(x,y)=0$, quadratic identities of the form 
$$
H_i(x,y) \cdot T_i(x,y) = U_i(x)^2 + d_{i} \cdot V_i(x)^2, \quad i=1,\dots, k,
$$ 
that must be satisfied by every solution. Once these identities are produced, the standard quadratic reciprocity argument can be used to restrict possible prime factors of $H_i(x,y)$, which may lead to a contradiction. This method, while simple, is surprisingly powerful, and we use it to resolve $25$ of the $30$ equations in Table~\ref{tab:2varH34}. We then resolve the remaining $5$ equations by deeper methods, thus proving that all equations in Table~\ref{tab:2varH34} have no integer solutions. We also extend the analysis to equations of size $35$, equations ordered by a different parameter called \emph{length} defined in \eqref{eq:ldef} below, as well as equations which have some obvious integer solutions, where the problem is to show that there are no others. All our insolubility proofs for the equations that resisted the previously known methods are formalized in Lean \cite{moura2021lean,mathlib2020}. 

This work is organized into six sections. Section \ref{sec:method} describes our method for finding quadratic identities, suitable for applying quadratic reciprocity. Section \ref{sec:smallest} uses this method to prove the integer insolubility of $25$ out of $30$ equations in Table~\ref{tab:2varH34}, including all four previously smallest open equations of size $h=32$. Section \ref{sec:extensions} shows that the same method can be used to solve higher-degree equations, as well as determine the complete set of integer solutions for certain equations for which this set is non-empty. Section \ref{sec:graphs} presents the deeper method of cubic reciprocity graphs, and uses it to solve the remaining $5$ equations from Table~\ref{tab:2varH34}. Section \ref{sec:open} presents further extensions, discussion, a description of AI use, and open questions.  

\section{The quadratic identities method}\label{sec:method}

This section presents a method for certifying non-existence of integer points on curves, which is the main contribution of this work. 

\subsection{The quadratic reciprocity step}\label{sec:repstep}

All equations in Table \ref{tab:2varH34} are of the form
\begin{equation}\label{eq:shape}
        F(x,y):=y^3+a(x)y^2+b(x)y+c(x)=0,
\end{equation}
where $a(x)$, $b(x)$ and $c(x)$ are polynomials with integer coefficients of degree at most $4$. Our aim is to prove that equation \eqref{eq:shape} has no integer solutions. The idea of the method is to prove that equation \eqref{eq:shape} implies one or several identities of the form 
\begin{equation}\label{eq:identity}
	H(x,y) \cdot T(x,y) = U(x)^2 + d \cdot V(x)^2,
\end{equation}
where $H,T,U,V$ are polynomials with integer coefficients, and $d$ is either a fixed nonzero integer, which is the case in most examples, or, in advanced cases, a polynomial in $(x,y)$ of degree at most $2$. 

Once \eqref{eq:identity} is established, the law of quadratic reciprocity can often be used to restrict possible prime divisors of $H$, which may lead to a contradiction with \eqref{eq:shape}. Specifically, we will use the following result. Here $J(a,n)$ denotes the Jacobi symbol \cite[Section 7.3.2]{grechuk2024polynomial} for positive odd $n$, with $J(a,1)=1$, and $v_p(n)$ for nonzero $n$ denotes the exponent with which prime $p$ enters the prime factorization of $|n|$.

\begin{proposition}\label{prop:norm-common-divisor}
	Let $d,H,T,U,V$ be integers with $d\ne0$ and
	\begin{equation}\label{eq:identity2}
	HT=U^2+dV^2\ne0.
	\end{equation}
	Let $H_0$ be the largest positive divisor of $|H|$ coprime to $2d$. If $J(-d,H_0)=-1$, then there is an odd prime $p$ such that
	\[
	J(-d,p)=-1,\qquad p\mid H,T,U,V,
	\]
	and both $v_p(H)$ and $v_p(T)$ are odd. The same statement holds with $H$ and $T$ interchanged.
\end{proposition}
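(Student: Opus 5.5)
Since $J(-d,\cdot)$ is multiplicative in the modulus, $J(-d,H_0)=\prod_p J(-d,p)^{v_p(H)}$, where the product runs over primes $p\mid H$ with $p\nmid 2d$. As this product equals $-1$, there is at least one odd prime $p\nmid d$ with $J(-d,p)=-1$ and $v_p(H)$ odd. Here $J(-d,p)$ is the Legendre symbol, so $-d$ is a quadratic nonresidue mod $p$. We fix such a $p$. It remains to prove $p\mid T,U,V$ and that $v_p(T)$ is odd.

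The key lemma is the classical descent fact. Let $p\nmid d$ be an odd prime with $\leg{-d}{p}=-1$, and let $p\mid U^2+dV^2$. Then $p\mid U$ and $p\mid V$. Indeed, if $p\nmid V$, then $(UV^{-1})^2\equiv -d \pmod p$, which contradicts the choice of $p$. So $p\mid V$, and then $p\mid U$. Iterating, write $N=U^2+dV^2\neq 0$ and $e=\min(v_p(U),v_p(V))$. Then $U=p^eU'$, $V=p^eV'$ with $p\nmid\gcd(U',V')$, and $N=p^{2e}(U'^2+dV'^2)$. By the lemma, $p\nmid U'^2+dV'^2$. Hence $v_p(N)=2e$ is even. (The hypothesis $N\ne0$ guarantees that the valuations are finite.)

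Now $v_p(H)+v_p(T)=v_p(N)=2e$ is even, and $v_p(H)$ is odd, so $v_p(T)$ is odd. In particular $v_p(T)\ge1$, so $p\mid T$. Also $2e\ge v_p(H)\ge1$ gives $e\ge1$, so $p\mid U$ and $p\mid V$. This proves the statement.

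The statement with $H$ and $T$ interchanged follows by symmetry of \eqref{eq:identity2} in $H$ and $T$. There is no serious obstacle. The only points needing care are bookkeeping: primes dividing $2d$ are excluded from $H_0$, which ensures that the chosen $p$ is odd and coprime to $d$ so that the lemma applies; and one must use $N\neq0$ to make $v_p(N)$ well defined.
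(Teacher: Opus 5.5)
Your proof is correct and follows essentially the same route as the paper: multiplicativity of the Jacobi symbol yields an odd prime $p\nmid d$ with $J(-d,p)=-1$ and $v_p(H)$ odd, and the nonresidue argument, applied once to $(U,V)$ and once to the reduced pair, shows $v_p(U^2+dV^2)$ is even, which forces $v_p(T)$ to be odd. The only cosmetic difference is that you define $e=\min(v_p(U),v_p(V))$ first and deduce $e\ge1$ from $2e\ge v_p(H)\ge1$, whereas the paper first shows $p\mid U,V$ directly from $p\mid H$.
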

\begin{proof}
	The multiplicativity of the Jacobi symbol \cite[Proposition 7.26]{grechuk2024polynomial} implies that
	\[
	-1=J(-d,H_0)=\prod_{p\mid H_0}J(-d,p)^{v_p(H)},
	\]
	where we have used that $v_p(H_0)=v_p(H)$ whenever $p\mid H_0$. Thus some odd prime $p\nmid d$ satisfies $J(-d,p)=-1$ and has odd exponent in $H$. 
	Since $p\mid H$, reducing \eqref{eq:identity2} modulo $p$ gives
	\[
	U^2\equiv-dV^2\pmod p.
	\]
	Suppose that $p\nmid V$. Then $V$ has a multiplicative inverse $V^{-1}$ modulo $p$, so multiplying the congruence by $(V^{-1})^2$ gives
	\[
	(UV^{-1})^2\equiv-d\pmod p.
	\]
	Hence, $-d$ is a quadratic residue modulo $p$, which contradicts $J(-d,p)=-1$. Therefore $p\mid V$, and then \eqref{eq:identity2} implies that $p\mid U$. Thus, $p$ divides both $U$ and $V$. 
		
	Write $U=p^rU_0$, $V=p^rV_0$, with at least one of $U_0,V_0$ not divisible by $p$. If $p\mid U_0^2+dV_0^2$, the same argument would imply that  $p$ divides both $U_0$ and $V_0$, hence we must have $p\nmid U_0^2+dV_0^2$, and therefore
	\[
	v_p(H)+v_p(T)=v_p(U^2+dV^2)=2r.
	\]
	Since $v_p(H)$ is odd, $v_p(T)$ is odd as well. This proves the claim.
\end{proof}

In most examples, we will apply Proposition \ref{prop:norm-common-divisor} with $|d|\leq 3$, in which case it reduces to the following corollary.

\begin{corollary}\label{cor:norm-congruence-cases}
	Let $H,T,U,V$ be integers satisfying $HT=U^2+dV^2\ne0$, where $d\in\{1,2,-2,3,-3\}$, and let $H_0$ be the largest positive divisor of $|H|$ coprime to $2d$. If the condition in the corresponding row holds, then $H,T,U,V$ have a common odd prime divisor $p$ with the indicated congruence, and both $v_p(H)$ and $v_p(T)$ are odd.
	\begin{center}
		\begin{tabular}{@{}ccl@{}}
			\toprule
			$d$ & Condition on $H_0$ & Congruence of the odd prime $p$\\
			\midrule
			$1$  & $H_0\equiv3\pmod4$       & $p\equiv3\pmod4$\\
			$2$  & $H_0\equiv5,7\pmod8$     & $p\equiv5,7\pmod8$\\
			$-2$ & $H_0\equiv3,5\pmod8$     & $p\equiv3,5\pmod8$\\
			$3$  & $H_0\equiv2\pmod3$       & $p\equiv2\pmod3$\\
			$-3$ & $H_0\equiv5,7\pmod{12}$  & $p\equiv5,7\pmod{12}$\\
			\bottomrule
		\end{tabular}
	\end{center}
\end{corollary}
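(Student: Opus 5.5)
The plan is to derive the corollary directly from Proposition~\ref{prop:norm-common-divisor}. For each $d\in\{1,2,-2,3,-3\}$ two things must be checked. First, the condition on $H_0$ in the table implies $J(-d,H_0)=-1$. Second, any odd prime $p\nmid d$ with $J(-d,p)=-1$ satisfies the congruence in the third column. Given these, the proposition supplies an odd prime $p$ dividing $H,T,U,V$ with $v_p(H)$ and $v_p(T)$ odd and $J(-d,p)=-1$. Since $J(-d,p)=-1\neq 0$, we get $p\nmid d$, and the congruence follows.

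Both checks come from one formula for $J(-d,n)$ as a function of $n$ modulo $4|d|$, valid for odd positive $n$ coprime to $d$. We use the supplementary laws $J(-1,n)=(-1)^{(n-1)/2}$ and $J(2,n)=(-1)^{(n^2-1)/8}$, together with Jacobi reciprocity for $J(3,n)$ (cf.\ \cite[Section 7.3.2]{grechuk2024polynomial}). Going through the cases:
\begin{itemize}
\item $d=1$: $J(-1,n)=-1$ exactly when $n\equiv 3\pmod 4$.
\item $d=2$: $J(-2,n)=J(-1,n)J(2,n)=-1$ exactly when $n\equiv 5,7\pmod 8$.
\item $d=-2$: $J(2,n)=-1$ exactly when $n\equiv 3,5\pmod 8$.
\item $d=3$: reciprocity gives $J(-3,n)=J(n,3)$, which is $-1$ exactly when $n\equiv 2\pmod 3$.
\item $d=-3$: $J(3,n)=(-1)^{(n-1)/2}J(n,3)$, which is $-1$ exactly when $n\equiv 5,7\pmod{12}$.
\end{itemize}
These formulas hold for all odd positive $n$ coprime to $d$, with $J(\cdot,1)=1$, because both sides are multiplicative in $n$ and agree on primes. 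Applying them with $n=H_0$ gives the first check, since $H_0$ is odd, positive and coprime to $2d$ by definition. Applying them with $n=p$ gives the second check.

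In the case $d=3$, the row reads ``$H_0\equiv 2\pmod 3$''. Here $H_0$ is odd and coprime to $3$, so this is equivalent to $H_0\equiv 5\pmod 6$, consistent with the computation above.

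I expect no real obstacle. The only point needing care is that the residue-class descriptions are stated for composite $H_0$, not just primes. This is handled by the multiplicativity remark: the formulas are multiplicative characters modulo $4|d|$ (in the cases $d=\pm3$ they factor through $n \bmod 12$, respectively $n\bmod 3$), so they hold for every admissible $n$.
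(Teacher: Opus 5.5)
Your proposal is correct and matches the paper's proof. Both use the supplementary laws and reciprocity (in particular $J(-3,n)=J(n,3)$ and $J(3,n)=(-1)^{(n-1)/2}J(n,3)$) to turn $J(-d,\cdot)=-1$ into the listed residue conditions, and then apply this first to $H_0$ and then to the prime supplied by Proposition~\ref{prop:norm-common-divisor}. There is one cosmetic slip in your last paragraph: the ``respectively'' is reversed, since the $d=3$ character factors through $n\bmod 3$ and the $d=-3$ character through $n\bmod 12$.
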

\begin{proof}
	For positive odd $n$ coprime to $2d$, quadratic reciprocity and its
	supplementary laws make $J(-d,n)=-1$ equivalent to the residue condition
	in the corresponding row. In particular, $J(-3,n)=J(n,3)$ and
	$J(3,n)=(-1)^{(n-1)/2}J(n,3)$. Apply these rules first to $H_0$, then
	to the prime supplied by Proposition~\ref{prop:norm-common-divisor}.
\end{proof}

Once identity \eqref{eq:identity} is established, the analysis of equation \eqref{eq:shape} may then proceed as follows. First we use \eqref{eq:shape} to derive suitable congruence conditions on $H_0$. Then we use Proposition~\ref{prop:norm-common-divisor} or Corollary \ref{cor:norm-congruence-cases} to conclude the existence of a common prime factor $p$ of $H,T,U,V$ satisfying the corresponding congruence conditions. If polynomials $U(x)$ and $V(x)$ in \eqref{eq:identity} are coprime, then polynomial division and clearing coefficient denominators give the B\'ezout identity
\[
E(x)U(x)+G(x)V(x)=c
\] 
for some polynomials $E,G\in\Z[x]$ and a nonzero integer $c$. Hence, $p \mid U(x)$ and $p \mid V(x)$ imply that $p \mid c$, which leaves only finitely many options for $p$. For each candidate $p$, we can solve the system
$$
F(x,y) \equiv H(x,y) \equiv T(x,y) \equiv U(x) \equiv V(x) \equiv 0 \pmod p,
$$  
and, if it has no solutions, obtain the desired contradiction. If $U(x)$ and $V(x)$ are not coprime, and can be written as $U(x)=R(x)U'(x)$ and $V(x)=R(x)V'(x)$ with $R,U',V'\in\Z[x]$ and coprime $U',V'$, then either $p \mid R(x)$ or $p$ is a common divisor of $U'(x)$ and $V'(x)$. The first case should be considered separately, while the second one reduces to finite enumeration in the same way as above.  

This argument, while it may be long, is usually standard and routine. The key step in the method is to derive a suitable identity \eqref{eq:identity}. This is the subject of the next subsections. 

\subsection{Choosing \texorpdfstring{$H$}{H} linear in \texorpdfstring{$y$}{y}}\label{sec:methodlin}
    
In the simplest version of the method, we try to find an auxiliary polynomial $H$ of the form
\begin{equation}\label{eq:Hlin}
	H(x,y)=qy+R(x)
\end{equation}
for some nonzero integer $q$ and polynomial $R(x)\in\Z[x]$. We regard $F$ and $H$ as polynomials in $y$, with $x$ as a parameter, perform polynomial division, and then, if necessary, multiply by $q^3$ to clear denominators. This gives an identity of the form
$$
q^3 F(x,y) = H(x,y)T_0(x,y) + P(x),
$$
for some polynomials $T_0(x,y)$ and $P(x)$ with integer coefficients. Hence, $F(x,y)=0$ implies that
$$
H(x,y)T_0(x,y) = -P(x).
$$
We then choose $R(x)$ so that $-\kappa P(x)=U^2+dV^2$ for a nonzero integer $\kappa$. Setting 
\begin{equation}\label{eq:Tlin}
	T=\kappa T_0
\end{equation}
gives \eqref{eq:identity}. 

We demonstrate this simple approach by solving the following equation, which has been left open in \cite[Section 8.3.2]{grechuk2024polynomial}.

\begin{example}\label{ex:2y3pxypx4p1}
	The equation
	\begin{equation}\label{eq:2y3pxypx4p1}
		2y^3 + xy + x^4 + 1 = 0
	\end{equation}
	has no integer solutions.
\end{example}
\begin{proof}
	Analysis modulo $4$ of this equation shows that $x$ is odd and $y \equiv 2 \,(\text{mod}\, 4)$. If $y \geq 0$, then the left-hand side of \eqref{eq:2y3pxypx4p1} is positive, hence we must have $y<0$. The equation can be rewritten as
	$$
		(-y)(2y^2+x) = (x^2)^2+1.
	$$ 
	Because $-y>0$ and $x$ is odd, the integer $A:=2y^2+x$ is positive and odd. Corollary \ref{cor:norm-congruence-cases} with $H=A$, $T=-y$, $U=x^2$ and $d=V=1$ implies that $A \equiv 1 \,(\text{mod}\, 4)$. Because $y$ is even, this implies that $x \equiv 1 \,(\text{mod}\, 4)$. 
	
	To eliminate this remaining case, let us apply the described method. Let $F(x,y)$ be the left-hand side of \eqref{eq:2y3pxypx4p1}, and let us choose 
	$$
		H(x,y)=x-y.
	$$
	Polynomial division of $F$ by $H$ gives
	$$
		F(x,y)=-(x-y)(2y^2+2xy+2x^2+x)+x^4+2x^3+x^2+1.
	$$
	Hence, $F(x,y)=0$ implies that
	\begin{equation}\label{eq:2y3pxypx4p1_ident}
		(x-y)(2y^2+2xy+2x^2+x) = x^4+2x^3+x^2+1 = x^2(x+1)^2 + 1.
	\end{equation}
	This is an identity of the form \eqref{eq:identity} with $d=1$, $U=x(x+1)$ and $V=1$. Since the right-hand side of \eqref{eq:2y3pxypx4p1_ident} is positive and 
	$$
		2y^2+2xy+2x^2+x = y^2+(x+y)^2+x(x+1) \geq 0, 	
	$$
	we conclude that $x-y>0$. We earlier established that $x \equiv 1 \,(\text{mod}\, 4)$ and $y \equiv 2 \,(\text{mod}\, 4)$, hence $x-y \equiv 3 \,(\text{mod}\, 4)$. However, Corollary \ref{cor:norm-congruence-cases} implies that every odd positive divisor of $U^2+1$ is congruent to $1$ modulo $4$, so \eqref{eq:2y3pxypx4p1_ident} implies that $x-y \equiv 1 \,(\text{mod}\, 4)$. This contradiction shows that equation \eqref{eq:2y3pxypx4p1} has no integer solutions.
\end{proof}

\subsection{Choosing \texorpdfstring{$H$}{H} quadratic in \texorpdfstring{$y$}{y}}\label{sec:methodquad}

The choice of $H$ to be linear in $y$, as in \eqref{eq:Hlin}, is quite restrictive. We next investigate a much more flexible choice
\begin{equation}\label{eq:Hquad}
	H(x,y)=qy^2+B(x)y+C(x)
\end{equation}
for some nonzero integer $q$ and polynomials $B(x)$ and $C(x)$ with integer coefficients. Polynomial division of $F(x,y)$ in \eqref{eq:shape} by $H(x,y)$, followed by multiplication by $q^2$ to clear denominators, now gives 
\begin{equation}\label{eq:division}
	q^2F(x,y)= S(x,y) H(x,y) + L(x) y + M(x)
\end{equation}
for some polynomials $S(x,y)$, $L(x)$ and $M(x)$. In fact, these polynomials can be written explicitly as 
\begin{equation}\label{eq:SLM}
	\begin{aligned}
		S(x,y)&=qy+qa(x)-B(x),\\
		L(x)&=q^2b(x)-qC(x)-qa(x)B(x)+B(x)^2,\\
		M(x)&=q^2c(x)-qa(x)C(x)+B(x)C(x),
	\end{aligned}
\end{equation}
where these formulas follow directly from polynomial division.

We now divide the quadratic polynomial \(H(x,y)\) by the linear polynomial
\(L(x)y+M(x)\), regarding both as polynomials in \(y\) with parameter $x$. Let us assume for a moment that $L(x)\neq 0$. Since the root of \(L(x)y+M(x)\) is
$$
y=-\frac{M(x)}{L(x)},
$$
the remainder of polynomial division is
$$
H\left(x,-\frac{M(x)}{L(x)}\right)
=
\frac{qM(x)^2-B(x)L(x)M(x)+C(x)L(x)^2}{L(x)^2}.
$$

More explicitly, polynomial division gives
\begin{equation}\label{eq:seconddivision}
	H(x,y)
	=
	\frac{qL(x)y+B(x)L(x)-qM(x)}{L(x)^2}
	\bigl(L(x)y+M(x)\bigr)
	+
	\frac{R_H(x)}{L(x)^2},
\end{equation}
where
\begin{equation}\label{eq:RH}
	R_H(x)
	=
	qM(x)^2-B(x)L(x)M(x)+C(x)L(x)^2.
\end{equation}
Clearing denominators in \eqref{eq:seconddivision}, we obtain the polynomial
identity
\begin{equation}\label{eq:seconddivision-cleared}
	L(x)^2H(x,y)
	=
	\bigl(qL(x)y+B(x)L(x)-qM(x)\bigr)
	\bigl(L(x)y+M(x)\bigr)
	+
	R_H(x).
\end{equation}
Although \eqref{eq:seconddivision} was derived under the assumption $L(x)\neq0$, the cleared relation \eqref{eq:seconddivision-cleared} is a polynomial identity and therefore remains valid when $L(x)=0$.

Now suppose that \((x,y)\) is an integer solution of \(F(x,y)=0\).  From
\eqref{eq:division} we then have
\begin{equation}\label{eq:linear-remainder-on-curve}
	L(x)y+M(x)=-S(x,y)H(x,y).
\end{equation}
Substituting \eqref{eq:linear-remainder-on-curve} into
\eqref{eq:seconddivision-cleared} gives

$$
L(x)^2H(x,y)
=
-S(x,y)\bigl(qL(x)y+B(x)L(x)-qM(x)\bigr)H(x,y)
+
R_H(x).
$$

Hence every integer solution of \(F(x,y)=0\) satisfies
\begin{equation}\label{eq:HTRH}
	H(x,y)T_0(x,y)=R_H(x),
\end{equation}
where
\begin{equation*}
	T_0(x,y)
	=
	L(x)^2
	+
	S(x,y)\bigl(qL(x)y+B(x)L(x)-qM(x)\bigr).
\end{equation*}

The objective is to choose $q$, $B(x)$, $C(x)$ in \eqref{eq:Hquad} for which $R_H(x)$ can be represented as 
\begin{equation}\label{eq:kappaRH}
 \kappa R_H(x)=U(x)^2+dV(x)^2,
\end{equation}
for some nonzero integer $\kappa$ and polynomials $U(x), V(x)$. Multiplying \eqref{eq:HTRH} by $\kappa$ gives the identity
\eqref{eq:identity}, with 
\begin{equation}\label{eq:Tgeneral}
 T=\kappa T_0
  =\kappa\bigl[L^2+S(qLy+BL-qM)\bigr].
\end{equation}

Once identity \eqref{eq:identity} is established, we then proceed as in Section \ref{sec:repstep}. If one identity \eqref{eq:identity} does not suffice for a contradiction, we may keep trying different linear \eqref{eq:Hlin} and quadratic \eqref{eq:Hquad} functions $H$ to derive more identities of the form \eqref{eq:identity}. As a result, we conclude that \eqref{eq:shape} implies that
\begin{equation}\label{eq:identitymany}
H_i(x,y) \cdot T_i(x,y) = U_i(x)^2 + d_i \cdot V_i(x)^2, \qquad i=1, \dots, k,
\end{equation}
and then apply Proposition \ref{prop:norm-common-divisor} for each of the $k$ identities in \eqref{eq:identitymany}.

This method is surprisingly powerful. In the remaining sections, we use it to solve many equations that were left open in the previous literature. 

\section{The smallest open equations}\label{sec:smallest}

This section demonstrates the power of the method described in Section \ref{sec:method} by using it to prove that $25$ out of $30$ equations in Table~\ref{tab:2varH34} have no integer solutions. The insolubility of the remaining five equations will be proved in Section \ref{sec:graphs}. Together, this will prove the following result.

\begin{theorem}\label{prop:table-one}
	All equations in Table~\ref{tab:2varH34} have no integer solutions.
\end{theorem}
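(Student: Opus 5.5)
The proof goes equation by equation through Table~\ref{tab:2varH34}. For each of the thirty equations $F(x,y)=0$ we will produce one or more identities of the form \eqref{eq:identitymany} and then run the reciprocity step of Section~\ref{sec:repstep}.

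For a given equation, the first step is a preliminary local analysis. We reduce modulo small powers of $2$ and of $3$ (and modulo $5,7$ where useful) to pin down the residues of $x$ and $y$. We also use a sign argument to fix the sign of $y$, or of a chosen factor, from the dominant terms: for $y^3\approx x^4$ we get $y>0$ when $|x|$ is large, and the finitely many small $x$ are checked directly.

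Next we search for an auxiliary $H$. We try linear $H=qy+R(x)$ first, with $\deg R\le 2$, since the remainder $P(x)$ must have degree at most $8$ in order to be a norm form. If that fails, we try quadratic $H=qy^2+B(x)y+C(x)$ as in \eqref{eq:Hquad}, with small $q$ and low-degree $B,C$. In each case we compute $L,M$ from \eqref{eq:SLM} and $R_H$ from \eqref{eq:RH}, and ask whether $\kappa R_H=U^2+dV^2$ with $|d|\le 3$ and small $\kappa$.

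Finding such representations is essentially a finite search over coefficient boxes. The condition is that $\kappa R_H$ factors over $\Z[\sqrt{-d}]$ as a norm. Practically, we impose that $R_H$ is a perfect square plus $d$ times a square, and solve the resulting polynomial coefficient equations. Example~\ref{ex:2y3pxypx4p1} is the prototype. For $y^3+xy=x^4+4$, for instance, the natural first attempt is $H=y-x^2$ or $H=y^2-x^3+\dots$, chosen so that $x^4+4=(x^2+2)^2-4x^2$ gives a norm of the required shape.

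Once the identities are in hand, we use \eqref{eq:shape} together with the local information to compute $H_0$ modulo $4$, $8$ or $12$, as required by the relevant row of Corollary~\ref{cor:norm-congruence-cases}. The aim is to exhibit a case where $J(-d,H_0)=-1$. The corollary then forces a prime $p$ with the stated congruence dividing $H,T,U,V$. A B\'ezout identity $EU+GV=c$ bounds $p\mid c$, and for each such $p$ we solve $F\equiv H\equiv T\equiv U\equiv V\equiv0\pmod p$ and find no solutions. Often a single identity leaves residual residue classes, as happened with $x\equiv1\pmod 4$ in Example~\ref{ex:2y3pxypx4p1}. Different $H_i$ then kill different classes, and we combine several identities until every class modulo the common modulus is excluded.

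The main obstacle is the search for $H$. There is no guarantee that any $H$ of bounded degree makes $R_H$ a norm form, and I expect this to genuinely fail for a handful of equations; the paper indicates five. For those we would allow $d$ to be a polynomial in $(x,y)$ of degree at most $2$, as permitted in \eqref{eq:identity}. Failing that, we would need to fall back on a cubic-reciprocity argument in the cubic field defined by $F$ as a polynomial in $y$. That is the deeper method deferred to Section~\ref{sec:graphs}, and the proof of the theorem is only complete once those five cases are also handled.
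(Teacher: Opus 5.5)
Your proposal restates the method of Section~\ref{sec:method}, but it does not prove Theorem~\ref{prop:table-one}. The theorem's content is the existence of explicit certificates for thirty specific equations, and you exhibit none. Asserting that a finite search over coefficient boxes will find $H$ with $\kappa R_H=U^2+dV^2$ does not show that the search succeeds.

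The one concrete suggestion you make would fail. For $y^3+xy=x^4+4$, the representation $x^4+4=(x^2+2)^2-(2x)^2$ is a difference of squares. So $-d$ is a square, $J(-d,H_0)=1$ identically, and Proposition~\ref{prop:norm-common-divisor} can never be triggered. Moreover, for linear $H=qy+R$ the remainder is $q^3F(x,-R/q)$, not $x^4+4$. What works there is $H=y^2+y+4A$ with $A=x^2+2x+2$. This gives $R_H=A^2Q$ and $4Q=U^2+3V^2$, and the primes dividing the common factor $A$ of the norm coordinates must be handled separately.

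Your search box $|d|\le3$ also excludes the certificates that actually work for many rows of Table~\ref{tab:compact-norms} ($d=7,-5,\pm6,-21,-210$).

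Most importantly, for nonconstant $d$ you give no way to evaluate $J(-d,H_0)$. That symbol is then not determined by $H_0$ modulo any fixed modulus. The missing idea is to pair identities whose norm coefficients are each other's first factor. For $y^3+xy=x^4+x+2$ these are $H_1T_1=U_1^2+H_2V_1^2$ and $H_2T_2=U_2^2+H_1V_2^2$. Each identity forces its own symbol, $J(-H_2,h_1)$ or $J(-H_1,h_2)$, to equal $1$. Quadratic reciprocity then forces their product to be $-1$ on the surviving classes. This is a coupling between identities, not the class-by-class exclusion you describe. It is also needed for $y^3+2y=x^4+x+4$ and $y^3+y=x^4+2x-4$.

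For the five equations of Table~\ref{tab:remaining}, the gap is complete, since you specify no argument. Cubic reciprocity requires the cube roots of unity, so it lives in $\mathcal E=\Z[\omega]$. It does not live in ``the cubic field defined by $F$'', which for varying $x$ is not even a fixed number field.

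The paper's argument has these ingredients:
\begin{itemize}
\item auxiliary polynomials $H_i\in\mathcal E[x,y]$ with rational charts;
\item cube identities modulo $F_i$, which make $c_i\prod_jH_j^{\widetilde\rho_{ij}}$ a cube at primes dividing $H_i$;
\item nonvanishing edge resultants and a finite exceptional set $S$;
\item exhaustive local covers at each $p\in S$.
\end{itemize}
The contradiction then comes from the product formula $\sum_pB_p=0$. These certificates are large, with up to $98$ vertices and $766$ edges per graph. For $y^3+xy-y=x^4-4$ no single graph suffices: three cubic graphs and two quadratic graphs must be evaluated jointly on each congruence class, together with the real place.

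As written, your plan is a search strategy. It leaves all thirty equations unverified and gives no route at all for five of them.
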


We give a detailed proof of Theorem \ref{prop:table-one} for the four smallest equations, of size $h=32$. For the remaining $21$ equations solved in this section, we present the key identities in the form of compact tables, and provide references to the detailed arguments and Lean formalizations.   

Section~\ref{sec:easy} solves three equations of size $h=32$ by deriving and using a single identity \eqref{eq:identity} with quadratic $H$ and constant $d$. Section~\ref{sec:variable-norm} treats the remaining more advanced equation of size $h=32$ by using two identities \eqref{eq:identity} with nonconstant $d$. Section~\ref{sec:H33-34} gives a short outline for the solved equations of sizes $33$ and $34$. 

\subsection{Three proofs with one norm identity}\label{sec:easy}

\begin{proposition}\label{prop:h32-2xminusy}
The equation
\begin{equation}\label{eq:x4py3p2xmym2}
    y^3-y=x^4+2x-2
\end{equation}
has no integer solutions.
\end{proposition}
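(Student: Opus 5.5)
We plan to use a single identity \eqref{eq:identity} with $H$ quadratic in $y$, as in Section~\ref{sec:methodquad}, and to arrange that $d$ is a constant. Here $F(x,y)=y^3-y-x^4-2x+2$, so in \eqref{eq:shape} we have $a=0$, $b=-1$ and $c(x)=-x^4-2x+2$. We take $q=1$ in \eqref{eq:Hquad}. By \eqref{eq:SLM}, $L=-1-C+B^2$ and $M=c+BC$.

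The key simplification is to force $L\equiv-1$, that is, $C=B^2$. Then \eqref{eq:RH} gives $R_H=M^2+BM+B^2$, and hence
\[
4R_H=(2M+B)^2+3B^2,\qquad M=c+B^3 .
\]
This is a representation \eqref{eq:kappaRH} with $\kappa=4$, $d=3$, $U=2M+B$ and $V=B$. The identity has the form
\[
H\cdot T=U^2+3V^2,\qquad H=y^2+B(x)y+B(x)^2,
\]
and it holds for an arbitrary $B\in\Z[x]$, with $T$ given by \eqref{eq:Tgeneral}. So the real work is choosing $B$. Guided by the shape of the equation, I would first try linear choices such as $B(x)=-2x$, which makes $H$ the norm form $y^2-2xy+4x^2$ tied to $2x-y$. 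If that fails, I would search over small linear $B$.

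The criterion for a good $B$ is that the original equation, reduced modulo $3$, $4$, $8$ or $9$, forces the $6$-free part $H_0$ of $H$ to satisfy $H_0\equiv 2\pmod 3$. Since $H=y^2+By+B^2\ge0$, its sign is under control. We also need $H\neq0$, which amounts to excluding $y=B=0$ directly from the equation.

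Once such a $B$ is fixed, Corollary~\ref{cor:norm-congruence-cases} with $d=3$ gives an odd prime $p\equiv2\pmod3$ dividing $H$, $T$, $U$ and $V$. From $p\mid V=B$ and $p\mid U=2M+B$ we get $p\mid M=c+B^3$, hence $p\mid c(x)$. For $B=-2x$ this means $p\mid x$ and $p\mid c(x)$, so $p\mid 2$, which contradicts $p$ being odd. For other choices of $B$, the B\'ezout identity for $B$ and $c$ confines $p$ to divisors of their resultant. Each such candidate $p$ is then excluded by checking that the system $F\equiv H\equiv B\equiv0\pmod p$ has no solutions.

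I expect the main obstacle to be the congruence step: showing from \eqref{eq:x4py3p2xmym2} alone that $H_0\equiv2\pmod3$. Removing the powers of $2$ and $3$ from $H$ may require splitting into cases according to $x$ and $y$ modulo $4$ and $9$ (for instance, separating the cases where $H$ is odd from those where $4\mid H$). This is also where the particular choice of $B$ must be tuned. If no single $B$ covers all residue classes, I would combine two such identities with different $B$, one for each remaining class.
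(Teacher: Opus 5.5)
\textbf{Gap: this family of identities can never yield a contradiction.} The failure is at the step you flagged as the main obstacle. With $C=B^2$, the congruence $H_0\equiv2\pmod3$ is false for \emph{every} integer pair $(x,y)$ and every $B\in\Z[x]$. Hence no analysis modulo $3,4,8,9$, no case split, and no combination of several choices of $B$ can produce it. The reason is that $4H=(2y+B)^2+3B^2$, so $H=\Norm(y-B\omega)$ with $\omega=(-1+\sqrt{-3})/2$ is itself a norm for the same $d=3$. Apply Corollary~\ref{cor:norm-congruence-cases} to the trivial identity $(4H)\cdot1=(2y+B)^2+3B^2$. If $H_0\equiv2\pmod3$, it would give an odd prime dividing $1$. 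Equivalently, primes $p\equiv2\pmod3$ are inert in $\Z[\omega]$ and divide $H$ to even powers, so $H_0\equiv1\pmod3$ always. Interchanging $H$ and $T$ does not help either: $HT=4R_H=4\Norm(M-B\omega)$ and $H$ are both norms, so the part of $T$ coprime to $6$ is also $\equiv1\pmod3$.

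Your identity in fact carries no arithmetic information about the curve. Since $y^3=y-c$ on $F=0$, it is just $(y-B)\,\Norm(y-B\omega)=y^3-B^3=y-M$. This only says that the norm $\Norm(y-B\omega)$ divides the norm $\Norm(M-B\omega)=R_H$. The simplification $L=-1$, which makes $R_H$ a norm for free, is equivalent to $C=B^2$. That is the same as $H$ having discriminant $B^2-4C=-3B^2$, which is exactly what makes $H$ a norm form of the very field you are using.

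\textbf{What is needed instead.} For the reciprocity step to bite, $H$ must not be a norm form of $\Q(\sqrt{-d})$. Then $R_H$ is no longer automatically a norm, so a genuine search over $B$ and $C$ is unavoidable. The paper takes $q=1$, $B=-2x-2$ and $C=4x^2+3$, so that $L=8x$ is not constant. It finds $R_H=U^2+V^2$ with $U=x^4+8x^3-2x^2+12x$ and $V=2x^3+2x^2-4$, so $d=1$. Here $H=y^2-(2x+2)y+4x^2+3$ is positive and is not a sum of two squares at solutions: the only solutions $(0,2)$ and $(2,2)$ modulo $4$ force $H\equiv3\pmod4$. Corollary~\ref{cor:norm-congruence-cases} then gives a prime $p\equiv3\pmod4$ dividing $H,T,U,V$. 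The B\'ezout identity for $U,V$ gives $p\mid1216=2^6\cdot19$, hence $p=19$. At the unique common zero $(1,6)$ of $F$ and $H$ modulo $19$ one has $T\equiv1\pmod{19}$, a contradiction.
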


\begin{proof}
Put $F(x,y)=y^3-y-x^4-2x+2$. It is of the
form \eqref{eq:shape} with
\[
    a(x)=0,\qquad b(x)=-1,\qquad c(x)=-x^4-2x+2.
\]
We choose $H(x,y)$ of the form \eqref{eq:Hquad} with $q=1$ and
\[
    B(x)=-2x-2,\qquad C(x)=4x^2+3.
\]
These polynomials were found by a direct search aimed at obtaining representation \eqref{eq:kappaRH} of the polynomial $R_H(x)$ below. With this choice of $B(x)$ and $C(x)$, \eqref{eq:Hquad} gives
\begin{equation*}
    H(x,y)=y^2-(2x+2)y+4x^2+3.
\end{equation*}
The identity
\[
    3H(x,y)=3(y-x-1)^2+(3x-1)^2+5
\]
shows that $H(x,y)>0$. A direct check shows that the only solutions to \eqref{eq:x4py3p2xmym2} modulo $4$ are $(x,y)=(0,2)$ and $(x,y)=(2,2)$. In both cases
\begin{equation}\label{eq:Hmod4x4py3p2xmym2}
    H(x,y)\equiv3\pmod4.
\end{equation}

Next, formulas \eqref{eq:SLM} give
\[
    S(x,y)=y+2x+2,\qquad L(x)=8x,
\]
\[
    M(x)=-x^4-8x^3-8x^2-8x-4.
\]
Substituting these polynomials into \eqref{eq:RH}, we obtain
\[
\begin{aligned}
R_H(x)&=x^8+16x^7+64x^6+200x^4-64x^3+128x^2+16\\
&=U(x)^2+V(x)^2,
\end{aligned}
\]
where
\[
    U(x)=x^4+8x^3-2x^2+12x,
    \qquad
    V(x)=2x^3+2x^2-4.
\]
This is the desired representation \eqref{eq:kappaRH} with $\kappa=d=1$, so \eqref{eq:Tgeneral} gives 
\[
\begin{aligned}
T(x,y)={}&2x^5+x^4y+18x^4+8x^3y+8x^2y+32x^2\\
&\quad{}+8xy^2+8xy-8x+4y+8.
\end{aligned}
\]
Hence the general identity \eqref{eq:HTRH} shows that every integer solution
of \eqref{eq:x4py3p2xmym2} would satisfy
\begin{equation}\label{eq:mainx4py3p2xmym2}
    H(x,y)T(x,y)=U(x)^2+V(x)^2.
\end{equation}

The system $U(x)=V(x)=0$ has no integer solutions, hence $U(x)^2+V(x)^2>0$. Thus $H>0$ and, by \eqref{eq:Hmod4x4py3p2xmym2}, $H\equiv3\pmod4$. Corollary~\ref{cor:norm-congruence-cases}, with $d=1$, applied to \eqref{eq:mainx4py3p2xmym2} implies the existence of a prime $p\equiv3\pmod4$ dividing $H,T,U,V$, with odd exponent in $H$.

On the other hand, direct polynomial division gives the B\'ezout identity
\[
    E(x)U(x)+G(x)V(x)=1216,
\]
where
\[
    E(x)=28x^2+18x+18,
    \qquad
    G(x)=-14x^3-107x^2+54x-304.
\]
Thus $p\mid1216=2^6\cdot19$. Since $p\equiv3\pmod4$, we must have $p=19$.
The simultaneous congruences
\[
    F(x,y)\equiv H(x,y)\equiv0\pmod{19}
\]
have the unique solution $(x,y)\equiv(1,6)\pmod{19}$, and for this pair
\[
    T(x,y)\equiv1\pmod{19}.
\]
This contradicts $19\mid T$. Therefore, equation \eqref{eq:x4py3p2xmym2} has no integer
solutions.
\end{proof}

\begin{proposition}\label{prop:h32-xyplus4}
The equation
\begin{equation}\label{eq:x4py3pxpyp4}
    y^3+y=x^4+x+4
\end{equation}
has no integer solutions.
\end{proposition}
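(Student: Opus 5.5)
The plan is to follow the template of Proposition~\ref{prop:h32-2xminusy}. Write $F(x,y)=y^3+y-x^4-x-4$, which has the form \eqref{eq:shape} with $a(x)=0$, $b(x)=1$ and $c(x)=-x^4-x-4$.

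\textbf{Step 1: local conditions.} Determine the solutions of \eqref{eq:x4py3pxpyp4} modulo $4$, and if necessary modulo $8$ or $3$. For instance, modulo $2$ we have $y^3+y\equiv0$, so $x^4+x\equiv0$ always holds; this gives no information, so I would work modulo $4$ and $8$ instead. The purpose is to record the residue classes of $(x,y)$ that an actual solution can occupy.

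\textbf{Step 2: search for $H$.} Look for $H(x,y)=qy^2+B(x)y+C(x)$ as in \eqref{eq:Hquad}, with $q=1$ or $2$, $\deg B\le1$ and $\deg C\le2$, chosen so that $H$ is positive definite. The requirement is that $R_H(x)$ from \eqref{eq:RH}, computed via \eqref{eq:SLM}, is a degree-$8$ polynomial representable as $\kappa R_H=U^2+dV^2$ with $d\in\{1,2,-2,3,-3\}$ and small $\kappa$. The search should prefer choices for which $H_0$ is forced by Step~1 into a residue class excluded by the table in Corollary~\ref{cor:norm-congruence-cases}; for example, with $d=1$ we want $H\equiv3\pmod4$ at every local solution.

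\textbf{Step 3: conclude.} With the identity $HT=U^2+dV^2$ in hand, where $T$ is given by \eqref{eq:Tgeneral}, argue as follows.
\begin{itemize}
\item Check that $U=V=0$ has no integer root, so the right-hand side is nonzero.
\item Use positivity of $H$ and the Step~1 congruences to get $J(-d,H_0)=-1$.
\item Invoke Corollary~\ref{cor:norm-congruence-cases} to obtain a prime $p$ in the forbidden class dividing $H,T,U,V$.
\item Compute a B\'ezout identity $EU+GV=c$, so that $p\mid c$. Equivalently, $p$ divides the resultant of $U$ and $V$, after removing any common factor $R(x)$ as described in Section~\ref{sec:repstep}.
\item For each admissible prime $p\mid c$, solve $F\equiv H\equiv0\pmod p$ and check that $T\not\equiv0$ or $U,V\not\equiv0$ modulo $p$. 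This yields the contradiction.
\end{itemize}

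\textbf{Main obstacle.} The hard step is Step~2: finding $(q,B,C)$ for which $R_H$ is a norm form, while the mod-$4$ or mod-$8$ data simultaneously force $H_0$ into a bad class. I would set $B=b_1x+b_0$ and $C=c_2x^2+c_1x+c_0$ with small coefficients and enumerate. For each candidate, test whether $R_H$ is a sum of two squares over $\Z[x]$, or more generally of the form $U^2+dV^2$. This can be done by factoring $R_H$ over $\Q(\sqrt{-d})$ and checking whether it is a norm, and then checking the congruence obstruction. If no single identity works, I would combine two identities with different $d$ (as in Section~\ref{sec:variable-norm}), splitting the residue classes of $x$ between them.
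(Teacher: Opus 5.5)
\textbf{Gap: the certificate is missing.} Your outline accurately summarizes the method of Section~\ref{sec:method}, but it does not prove the proposition. The one step that carries the mathematical content, Step~2, is described as a search and never carried out. You give no $H$, no $U,V,d,\kappa$, no B\'ezout constant and no finite check, so there is nothing to verify. There is also no a priori guarantee the search succeeds: of the $30$ equations in Table~\ref{tab:2varH34}, the paper's elementary method resolved only $25$, and the remaining $5$ needed deeper methods. For a specific equation the certificate \emph{is} the proof. Calling it the ``main obstacle'' is an honest assessment, but it means the argument itself is absent.

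\textbf{The certificate that works.} It lies in your search space: $q=1$, $B=1$, $C=2x^2+4x+5$, that is, $H=y^2+y+2x^2+4x+5$. This $H$ is odd and satisfies $4H=(2y+1)^2+8(x+1)^2+11>0$. Then $S=y-1$, $L=-2x^2-4x-3$, $M=-x^4+2x^2+3x+1$, and $4R_H=U^2+3V^2$ with $U=2x^4-x^2+5x+7$ and $V=2x^3+11x^2+15x+7$.

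\textbf{Your Step~1 targets the wrong prime.} Since $d=3$, the congruence obstruction lives at $3$, not at $2$. Reduction modulo $3$ is not enough, because $x\equiv2\pmod3$ forces $3\mid H$. You need the solutions modulo $9$, which give $H\equiv2,5,6,8\pmod9$ and hence $H_0=H/3^{v_3(H)}\equiv2\pmod3$.

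\textbf{Finishing.} Corollary~\ref{cor:norm-congruence-cases} then gives a prime $p\equiv2\pmod3$ dividing $H,T,U,V$. The B\'ezout identity $EU+GV=1190=2\cdot5\cdot7\cdot17$ leaves $p\in\{5,17\}$. The system $F\equiv H\equiv0$ has no solution modulo $17$, and modulo $5$ its only solution $(3,4)$ gives $T\equiv2\pmod5$, a contradiction.
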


\begin{proof}
Put $F(x,y)=y^3+y-x^4-x-4$. It is of the
form \eqref{eq:shape} with
\[
    a(x)=0,\qquad b(x)=1,\qquad c(x)=-x^4-x-4.
\]
We choose $H(x,y)$ of the form \eqref{eq:Hquad} with $q=1$ and
\[
    B(x)=1,\qquad C(x)=2x^2+4x+5,
\]
so that
\begin{equation*}
    H(x,y)=y^2+y+2x^2+4x+5.
\end{equation*}
The identity
\[
    4H(x,y)=(2y+1)^2+8(x+1)^2+11
\]
shows that $H(x,y)>0$. Also $H$ is odd, because $y(y+1)$ is even. A direct
enumeration of the solutions of \eqref{eq:x4py3pxpyp4} modulo $9$ implies that
\begin{equation}\label{eq:Hmod9-x4py3pxpyp4}
    H\equiv 2,5,6, \, \text{or} \, 8 \pmod 9.
\end{equation}

From \eqref{eq:SLM} we obtain
\[
    S(x,y)=y-1,
    \qquad L(x)=-2x^2-4x-3,
\]
\[
    M(x)=-x^4+2x^2+3x+1.
\]
Formula \eqref{eq:RH} now gives
\begin{equation*}
    4R_H(x)=U(x)^2+3V(x)^2,
\end{equation*}
where
\[
    U(x)=2x^4-x^2+5x+7,
    \qquad
    V(x)=2x^3+11x^2+15x+7.
\]
This is representation \eqref{eq:kappaRH} with $\kappa=4$ and $d=3$. Thus \eqref{eq:Tgeneral} gives 
\[
\begin{aligned}
T(x,y)={}&4\bigl(x^4y+3x^4+16x^3-2x^2y^2-2x^2y+32x^2\\
&\qquad{}-4xy^2-3xy+31x-3y^2-y+13\bigr).
\end{aligned}
\]
Hence, \eqref{eq:HTRH} implies that every integer solution of \eqref{eq:x4py3pxpyp4} would satisfy
\begin{equation*}
    H(x,y)T(x,y)=U(x)^2+3V(x)^2.
\end{equation*}

The system $U(x)=V(x)=0$ has no integer solutions, hence $U(x)^2+3V(x)^2>0$. By \eqref{eq:Hmod9-x4py3pxpyp4}, either $H\equiv2\pmod3$, or $v_3(H)=1$ and $H/3\equiv2\pmod3$. In any case, the positive odd integer $H_0=H/3^{v_3(H)}$ satisfies $H_0\equiv2\pmod3$. Corollary~\ref{cor:norm-congruence-cases}, with $d=3$, implies the existence of an odd prime $p\equiv2\pmod3$ dividing $H,T,U,V$.

The polynomial B\'ezout identity
\begin{equation*}
    E(x)U(x)+G(x)V(x)=1190,
\end{equation*}
where
\[
    E(x)=-316x^2-1304x-445,
\]
\[
    G(x)=316x^3-434x^2+304x+615,
\]
implies that $p\mid1190=2\cdot5\cdot7\cdot17$, and the condition $p\equiv2\pmod3$ leaves only $p=5$ or $p=17$.

There is no solution of $F\equiv H\equiv0\pmod{17}$. Modulo $5$, the unique
solution is $(x,y)\equiv(3,4)$, and in this case
\[
    T(x,y)\equiv2\pmod5,
\]
which is a contradiction with $p\mid T$. Therefore, equation \eqref{eq:x4py3pxpyp4} has no integer solutions.
\end{proof}

The next example illustrates that the method works when polynomials $U(x)$ and $V(x)$ in \eqref{eq:identity} are not coprime. 

\begin{proposition}\label{prop:h32-xy4}
The equation
\begin{equation}\label{eq:x4py3pxyp4}
    y^3+xy=x^4+4
\end{equation}
has no integer solutions.
\end{proposition}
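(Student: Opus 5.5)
We apply the quadratic identities method of Section~\ref{sec:methodquad} to $F(x,y)=y^3+xy-x^4-4$, which has the form \eqref{eq:shape} with $a(x)=0$, $b(x)=x$, $c(x)=-x^4-4$.

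\textbf{Step 1: choose $H$.} We take $H$ of the form \eqref{eq:Hquad} with $q=1$. Formulas \eqref{eq:SLM} then give
\[
S=y-B,\qquad L=x-C+B^2,\qquad M=-x^4-4+BC,
\]
and \eqref{eq:RH} gives
\[
R_H=M^2-BLM+CL^2.
\]
We search over low-degree $B(x)$, $C(x)$ until $\kappa R_H=U^2+dV^2$ with $d\in\{1,\pm2,\pm3\}$. Since the remark before the statement says $U,V$ will not be coprime, we expect a factorization $U=R\,U'$, $V=R\,V'$ with $U',V'$ coprime and $R(x)$ a small nonconstant factor such as a linear polynomial in $x$. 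Natural first candidates are $C(x)=2x^2+\dots$, which makes $H$ positive definite, and $B$ constant or linear.

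\textbf{Step 2: congruence conditions on $H$.} We prove $H>0$ by completing squares, as in the previous propositions. We then enumerate the solutions of \eqref{eq:x4py3pxyp4} modulo a suitable $m$, trying $4$, $8$, $3$, $9$ or $5$ according to $d$. Note that $x$ even forces $y$ even, giving $y^3\equiv 4\pmod 8$, which is impossible; so this enumeration is quite constrained. The goal is to show that the part $H_0$ of $H$ coprime to $2d$ lies in the forbidden class of Corollary~\ref{cor:norm-congruence-cases}. We must also check that $U^2+dV^2\neq0$ at integer points.

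\textbf{Step 3: the common prime.} The corollary then supplies an odd prime $p$ in the forbidden class with $p\mid H,T,U,V$ and $v_p(H)$ odd. There are two cases.
\begin{itemize}
\item If $p\nmid R(x)$, the B\'ezout identity $EU'+GV'=c$ forces $p\mid c$. For each such $p$ we solve $F\equiv H\equiv0\pmod p$ and check $T\not\equiv0$, as in the previous proofs.
\item If $p\mid R(x)$, we treat this case separately and expect it to be the main obstacle. If $R$ is linear, say $R=x-r$, then $x\equiv r\pmod p$. Substituting into $F\equiv H\equiv0\pmod p$, the resultant in $y$ of $F(r,y)$ and $H(r,y)$ is an integer, so $p$ must divide it. This again leaves finitely many $p$ to check directly.
\end{itemize}
If this fails, because the resultant condition is satisfied by some admissible $p$, we have two fallbacks. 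One is a finer valuation argument: $v_p(H)$ and $v_p(T)$ are odd, while $v_p(R^2)$ is even. The other is a second identity from a different $H$ that excludes the residual class.

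The hardest part is Step 1, finding $B$ and $C$ together with the representation of $R_H$, since this requires a computer search. Among the remaining steps, the case $p\mid R(x)$ is the delicate one.
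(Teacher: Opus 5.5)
Your proposal is a search plan rather than a proof. Step~1 never exhibits $B$, $C$, $\kappa$, $d$, $U$, $V$, and Steps~2--3 cannot be executed without them, so nothing is established.

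The missing idea is to build the Sophie Germain factor $A(x)=x^2+2x+2$ of $x^4+4=A(x)(x^2-2x+2)$ into $H$. The paper takes $B=1$ and $C=4A$, i.e.\ $H=y^2+y+4A(x)>0$. Then $M=-A(x)(x^2-2x-2)$, $L=x+1-4A$, and $R_H=A^2Q$ with $4Q=U^2+3V^2$, $U=2x^2-4x-19$, $V=5(2x+1)$. So $d=3$, $\kappa=4$, and the common factor of the norm coordinates is the irreducible quadratic $A$, not a linear polynomial.

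Your Step~2 remark is also wrong, because you dropped the term $xy$. In fact $x$ odd is impossible: then $y(y^2+x)$ is even while $x^4+4$ is odd. And $x$ even forces $y$ even and $xy\equiv4\pmod 8$, i.e.\ $x\equiv y\equiv2\pmod4$. Together with the classes $(x,y)\equiv(0,1),(1,1)\pmod3$, this gives $H\equiv2\pmod4$ and $H\equiv1\pmod3$. Hence $H_0=H/2\equiv2\pmod3$, which is exactly the input for the $d=3$ row of Corollary~\ref{cor:norm-congruence-cases}.

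Even granting the identity, your plan for the case $p\mid R$ would fail here. Because $A\mid x^4+4$, every point $(x,0)$ with $A(x)=0$ is a common zero of $A$, $F$ and $H$: indeed $F(x,0)=-(x^4+4)$ and $H(x,0)=4A(x)$. So $A\equiv F\equiv H\equiv0\pmod p$ is solvable for every prime $p\equiv1\pmod4$, among them infinitely many with $p\equiv2\pmod3$. Any elimination that uses only $F$ and $H$ therefore returns a zero resultant and excludes nothing. Your valuation fallback is empty too: odd $v_p(H)$ and odd $v_p(T)$ sum to an even number, consistent with $v_p(A^2Q)$.

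The contradiction must come from $p\mid T$. Modulo $p\mid A$ one has $M\equiv0$ and $L\equiv x+1$, with $L^2\equiv(x+1)^2=A-1\equiv-1$. So $F=SH+Ly+M$ forces $p\mid y$. Then $T=4\bigl[L^2+(y-1)(Ly+L-M)\bigr]\equiv4L(L-1)$, so $p\mid L-1\equiv x$, and then $A\equiv2\pmod p$, impossible for odd $p$.

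The other case, $p\mid U,V$, does go as you anticipate. $U$ has no root modulo $5$, so $p\mid 2x+1$, and $2U=(2x+1)^2-6(2x+1)-33$ leaves only $p=11$. There $F\equiv H\equiv0$ forces $(x,y)\equiv(5,8)$, giving $T\equiv7\pmod{11}$.
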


\begin{proof}
Put $F(x,y)=y^3+xy-x^4-4$. It is of the
form \eqref{eq:shape} with
\[
    a(x)=0,\qquad b(x)=x,\qquad c(x)=-x^4-4.
\]
Direct checks modulo $8$ and $3$ show that every solution to \eqref{eq:x4py3pxyp4} must satisfy
\begin{equation}\label{eq:mod4-x4py3pxyp4}
    x\equiv y\equiv2\pmod4, \quad \quad (x,y)\equiv(0,1)\text{ or }(1,1)\pmod3.
\end{equation}

We choose $H(x,y)$ of the form \eqref{eq:Hquad} with $q=1$ and
\[
B(x)=1,\qquad C(x)=4A(x), \quad \text{where} \quad A(x)=x^2+2x+2=(x+1)^2+1,
\]
so that
\begin{equation*}
	H(x,y)=y^2+y+4A(x) = y^2+y+4x^2 + 8x + 8.
\end{equation*}
Since $A(x)>0$ and $y(y+1)\ge0$ for every integer $y$, we have $H(x,y)>0$.
The congruences \eqref{eq:mod4-x4py3pxyp4} give
\begin{equation}\label{eq:Hres-x4py3pxyp4}
    H\equiv2\pmod4,
    \qquad H\equiv1\pmod3.
\end{equation}

Polynomial division of $F(x,y)$ by $H(x,y)$ gives 
\begin{equation}\label{eq:divisioncurrent}
	F(x,y)= S(x,y) H(x,y) + L(x) y + M(x)
\end{equation}
where, by \eqref{eq:SLM}, 
\begin{equation}\label{eq:SLM_Def}
    S(x,y)=y-1,
    \qquad L(x)=x+1-4A(x),
	\qquad 
    M(x)=-A(x)(x^2-2x-2).
\end{equation}
Substituting this in \eqref{eq:RH} results in
\begin{equation*}
    R_H(x)=A(x)^2Q(x), \quad \text{where} \quad Q(x)=x^4-4x^3+60x^2+113x+109.
\end{equation*}

Moreover,
\begin{equation}\label{eq:Qnorm-x4py3pxyp4}
    4Q(x)=U(x)^2+3V(x)^2,
\end{equation}
with
\[
    U(x)=2x^2-4x-19,
    \qquad V(x)=5(2x+1).
\]
This implies that $4R_H=(AU)^2+3(AV)^2$, which is formula \eqref{eq:kappaRH} with $\kappa=4$, $d=3$, and the polynomials of the right-hand side having a common factor $A(x)$. 

Formula \eqref{eq:Tgeneral} gives
\begin{equation}\label{eq:T-x4py3pxyp4}
    T=4\bigl[L^2+(y-1)(Ly+L-M)\bigr].
\end{equation}
Hence, \eqref{eq:HTRH} implies that every integer solution of \eqref{eq:x4py3pxyp4} would satisfy
\begin{equation}\label{eq:main-x4py3pxyp4}
    H(x,y)T(x,y)=4A(x)^2Q(x)=(A(x)U(x))^2+3(A(x)V(x))^2.
\end{equation}
Since $V(x)\ne0$ for integral $x$, \eqref{eq:Qnorm-x4py3pxyp4} gives $Q>0$.
Together with $H>0$, equation \eqref{eq:main-x4py3pxyp4} implies that $T>0$.

By \eqref{eq:Hres-x4py3pxyp4}, $H_0=H/2$ is positive and odd with
$H_0\equiv2\pmod3$.
Corollary~\ref{cor:norm-congruence-cases}, with $d=3$, implies the existence of an odd prime $p\equiv2\pmod3$ with odd exponent in $H$ such that $p\mid H,T,AU,AV$. Hence, we need to consider two cases: either $p\mid A$, or $p\mid U,V$.

First assume that $p\mid U,V$. The congruence $U\equiv0\pmod5$ has no solutions, hence $p\ne5$, and $p \mid V=5(2x+1)$ implies that $p \mid 2x+1$. The identity
\begin{equation*}
    2U(x)=(2x+1)^2-6(2x+1)-33
\end{equation*}
then gives $p\mid33$, so $p=11$. The condition $p\mid V$ gives $x\equiv5\pmod{11}$. The simultaneous congruences
$F\equiv H\equiv0\pmod{11}$ have the unique solution $(x,y)\equiv(5,8)\pmod{11}$, but then
\eqref{eq:T-x4py3pxyp4} implies that
\[
 T \equiv7\pmod{11}.
\]
This contradicts $p\mid T$, so the case $p\mid U,V$ is impossible. 

It remains to consider the case $p\mid A$. In this case, \eqref{eq:SLM_Def} implies that $M\equiv 0$ and $L\equiv x+1$ modulo $p$. Then $L^2 \equiv (x+1)^2 = A-1 \equiv -1$ modulo $p$. In particular, $p \nmid L$. Further, $F(x,y)=0$ and \eqref{eq:divisioncurrent} imply that $p\mid SH+Ly+M$. Because $p\mid H$ and $p\mid M$, this gives $p\mid Ly$. Because $p \nmid L$, we must have $p \mid y$. Because $p\mid T$,  \eqref{eq:T-x4py3pxyp4} then implies that
$$
0 \equiv 4(L^2+(0-1)(L\cdot 0+L-0)) \pmod{p},
$$ 
hence $p\mid 4L(L-1)$. Because $p\nmid 4$ and $p \nmid L$, this implies that $p \mid L-1$, or $p \mid x$. But in this case $p \mid A$ implies that $p \mid 2$, a contradiction.

Hence, equation \eqref{eq:x4py3pxyp4} has no integer solutions.
\end{proof}

\subsection{A proof with two norm identities}\label{sec:variable-norm}

To solve the last equation of size $h=32$, we will derive two identities \eqref{eq:identity} with nonconstant $d$. 

\begin{proposition}\label{prop:tb0902-short}
The equation
\begin{equation}\label{eq:tb0902-short-main}
 y^3+xy=x^4+x+2
\end{equation}
has no integer solutions.
\end{proposition}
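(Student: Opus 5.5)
We will follow the scheme of Section~\ref{sec:methodquad}, but, as announced at the start of this subsection, we will aim for two identities of the form \eqref{eq:identity} in which $d$ is a polynomial in $(x,y)$ rather than a constant. Put $F(x,y)=y^3+xy-x^4-x-2$, so that in \eqref{eq:shape} we have $a=0$, $b=x$ and $c=-x^4-x-2$.

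The first step is local analysis. We enumerate solutions modulo small moduli ($8$, $3$, $5$, $7$, $9$) to pin down the residues of $x$ and $y$. In particular, we expect to force parity conditions on $x$ and $y$, and congruence classes modulo $3$ or $8$. We also use the real geometry: $y^3\approx x^4$ forces $y>0$ for large $|x|$, which will give sign information on the auxiliary polynomials.

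The second step is the search for $H$. We take $H=qy^2+B(x)y+C(x)$ with small $q$ and low-degree $B,C$, and compute $S,L,M$ from \eqref{eq:SLM} and $R_H$ from \eqref{eq:RH}. Since a single representation $\kappa R_H=U^2+dV^2$ with constant $d$ apparently does not lead to a contradiction here, we relax the requirement. We look for $\kappa R_H=U^2+D\,V^2$ with $D=D(x,y)$ of degree at most $2$, and we are allowed to use the curve equation to rewrite terms. Proposition~\ref{prop:norm-common-divisor} is stated for arbitrary integers $d\ne0$, so it still applies pointwise. The working condition becomes $J(-D(x,y),H_0)=-1$, which we then evaluate by Jacobi reciprocity. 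A natural choice is to let $D$ share factors with $H$ or $T$, or to take $D$ to be a simple form such as $x$ or $y$. Then $J(-D,H_0)$ flips to $J(H_0,D)$ up to signs determined by residues modulo $4$, and it can be computed from $H\bmod D$ using the equation.

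The third step is to combine two such identities. Each identity $H_iT_i=U_i^2+D_iV_i^2$ will rule out some of the residue classes found in the first step, through the sign of the Jacobi symbol. We expect the two identities to cover complementary classes: for instance one handles $x$ odd and the other $x$ even, or they split by $y$ modulo $4$. Whenever Proposition~\ref{prop:norm-common-divisor} yields a prime $p$ dividing $H_i,T_i,U_i,V_i$, we eliminate it as before. We use a Bézout identity in $U_i,V_i$, or, where $D_i$ depends on $y$, a resultant argument modulo $p$ together with $F$. This bounds $p$ to a finite list, and we then check the system $F\equiv H_i\equiv T_i\equiv0\pmod p$ directly.

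The main obstacle is the second step: finding $B,C$ (and $D$) for which $R_H$ becomes a norm form, and for which the resulting Jacobi symbol is actually computable and equal to $-1$ on the relevant classes. We would first search over $q\in\{1,2,4\}$, $\deg B\le1$ and $\deg C\le2$ with small coefficients. For each candidate we would test whether $R_H$, viewed modulo $F$, factors as $U^2+DV^2$ for $D$ in a small family of forms such as $\pm x$, $\pm y$, $x+k$ and $y+k$. A secondary difficulty is controlling the signs of $H$ and $D$, which is needed to make reciprocity applicable; we would handle this with sum-of-squares rewritings like those used in Propositions~\ref{prop:h32-2xminusy}–\ref{prop:h32-xy4}.
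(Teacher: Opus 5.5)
Your proposal is a search plan, not a proof. No identity $HT=U^2+DV^2$ is ever exhibited, so the step you yourself call the main obstacle is left open, and that step carries all the content.

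\textbf{The expected mechanism is not the one that works here.} You expect each identity to exclude residue classes on its own, through a symbol $J(-D,H_0)$ that reciprocity reduces to a congruence computation, as in Proposition~\ref{prop:quintic}. The missing idea is to make the two identities refer to each other.

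\begin{itemize}
\item The paper divides $F$ by $H_1=y+x$ and writes the remainder as $2Q_1=U_1^2+(x^2-2x)V_1^2$, with $U_1=x^2+x+2$ and $V_1=x+1$.
\item It then uses the freedom $H_1(t_1+kV_1^2)=U_1^2+(x^2-2x+kH_1)V_1^2$. Taking $k=3$ turns the coefficient of $V_1^2$ into a second auxiliary polynomial $H_2=3y+x^2+x$.
\item Dividing $F$ by $H_2$ gives an identity whose coefficient is $H_1$ itself:
\[
H_1T_1=U_1^2+H_2V_1^2,\qquad H_2T_2=U_2^2+H_1V_2^2 .
\]
\end{itemize}

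Each identity alone only yields $J(-H_2,h_1)=1$, respectively $J(-H_1,h_2)=1$. Neither is a congruence condition on $(x,y)$; modulo $H_2$ one only has $3H_1\equiv-(x^2-2x)$. The contradiction comes from combining them, in four steps:
\begin{itemize}
\item show $H_1,H_2>0$ have no common odd prime;
\item eliminate common primes of $U_i,V_i$ (via $U_1-xV_1=2$ for the first identity, and $p\in\{3,37\}$ for the second);
\item get both symbols equal to $1$ from Proposition~\ref{prop:norm-common-divisor};
\item observe that quadratic reciprocity between $h_1$ and $h_2$ makes these values incompatible on the remaining classes.
\end{itemize}
Both $H_i$ are linear in $y$, and $H_2$ is not of the form $\pm x,\pm y,x+k,y+k$. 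So your search space ($q\in\{1,2,4\}$ with that family of $D$) does not contain the choices known to work.

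\textbf{Your local step is too weak for that final reciprocity computation.} The computation needs $y\equiv2\pmod 8$ and $x\equiv2,3\pmod8$. Enumeration modulo $8$ leaves eight classes:
\[
(x,y)\equiv(1,4),(2,2),(2,6),(3,2),(5,0),(6,0),(6,4),(7,6) \pmod 8 .
\]
The paper cuts these down by reciprocity, not by congruences. It uses the factorization $(y-1)(y^2+y+x+1)=x^4+1$ and the two norm forms $x^4+1=(x^2)^2+1=(x^2-1)^2+2x^2$. Via Corollary~\ref{cor:norm-congruence-cases}, these force $y-1\equiv1\pmod8$.

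\textbf{You also need sign information for every solution,} not just for large $|x|$. The paper proves $y\ge2$, $H_1>0$ and $H_2>0$ by explicit inequalities. This is what makes $h_1,h_2$ positive odd parts to which the Jacobi symbol and reciprocity apply.
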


\begin{proof}
Let $F(x,y)=y^3+xy-x^4-x-2$, and suppose that $F(x,y)=0$ for integers
$x,y$. Since $x^4+x+2>0$, we have $y(y^2+x)>0$. If $y<0$, then $y^2+x<0$, so $x<0$ and
$|y|<\sqrt{|x|}\le |x|$. In this case,
\[
 y^3+xy=|y|\bigl(|x|-|y|^2\bigr)
 <|y|\,|x|<|x|^2<|x|^4-|x|+2=x^4+x+2,
\]
contradicting \eqref{eq:tb0902-short-main}. Thus $y\ge0$. Direct
substitution also excludes $y=0$ and $y=1$, so $y\ge2$.

Reduction of \eqref{eq:tb0902-short-main} modulo $4$ shows that $y$ is
even. Thus $n:=y-1$ is positive and odd. The equation can be rewritten as
\[
 (y-1)(y^2+y+x+1)=x^4+1.
\]
The two representations
\[
 x^4+1=(x^2)^2+1^2=(x^2-1)^2+2x^2,
 \qquad \gcd(x^2,1)=\gcd(x^2-1,x)=1,
\]
allow us to apply Corollary~\ref{cor:norm-congruence-cases} with $d=1,2$.
They show that $x^4+1$ has no positive odd divisor congruent to $3$ modulo
$4$ or to $5,7$ modulo $8$: such a divisor would force a common prime
divisor of the corresponding coprime norm coordinates. Hence
$n\equiv1\pmod8$. Substituting $y\equiv2\pmod8$ into the original
equation gives
\begin{equation}\label{eq:tb0902-short-local}
 y\equiv2\pmod8,\qquad x\equiv2\text{ or }3\pmod8.
\end{equation}

We now apply the method of Section~\ref{sec:method}, first choosing
\[
 H_1(x,y)=y+x.
\]
This polynomial has the form \eqref{eq:Hlin}, with $q=1$ and $R(x)=x$.
Polynomial division of $F$ by $H_1$ gives
\begin{equation}\label{eq:tb0902-division1}
\begin{aligned}
 F(x,y)&=(y+x)(y^2-xy+x^2+x)-Q_1(x),\\
 Q_1(x)&=x^4+x^3+x^2+x+2.
\end{aligned}
\end{equation}
As in Section~\ref{sec:methodlin}, we seek a representation
\begin{equation}\label{eq:tb0902-aux-iden}
 \kappa Q_1(x)=U(x)^2+d V(x)^2
\end{equation}
for a nonzero integer $\kappa$. Since a useful representation with constant
$d$ was not found, we allow $d$ to be a polynomial of low degree. If $V$
is coprime to $Q_1$ in $\Q[x]$, then $V$ is invertible modulo $Q_1$, and
\eqref{eq:tb0902-aux-iden} gives
$(UV^{-1})^2\equiv-d\pmod{Q_1}$. This suggests looking at the
remainders of small squares. Polynomial division gives
\[
 x^4\equiv-x^3-x^2-x-2,\qquad
 x^5\equiv2-x,\qquad x^6\equiv2x-x^2\pmod{Q_1}.
\]
Thus $2x-x^2$ is congruent to the square $(x^3)^2$ modulo $Q_1$, suggesting
$d(x)=x^2-2x$. And indeed a quick search gives
\[
  2Q_1(x)=U_1(x)^2+(x^2-2x)V_1(x)^2 \qquad \text{where} \quad U_1(x)=x^2+x+2,\quad V_1(x)=x+1.
\]
Combining this with \eqref{eq:tb0902-division1}, we conclude that every integer solution of $F(x,y)=0$ satisfies
\[
 H_1(x,y)t_1(x,y)=U_1(x)^2+(x^2-2x)V_1(x)^2,
\]
where the quotient is
\[
 t_1(x,y)=2(y^2-xy+x^2+x).
\]
Adding $kH_1V_1^2$ to both sides gives the family of identities
\begin{equation}\label{eq:tb0902-identity1k}
 H_1(t_1+kV_1^2)=U_1^2+\bigl(x^2-2x+kH_1\bigr)V_1^2,
 \qquad k\in\Z,
\end{equation}
all valid on $F=0$.

To obtain a second character restriction, we seek a second norm identity
whose first factor is the coefficient of $V_1^2$ in
\eqref{eq:tb0902-identity1k}. This suggests trying
$H_2=x^2-2x+kH_1$ for small $k$. The choice $k=3$ gives
\[
 H_2(x,y)=3y+x^2+x,
\]
and will produce the required pair of identities. This polynomial again
has the form \eqref{eq:Hlin}, now with $q=3$ and $R(x)=x^2+x$.
The division prescribed in Section~\ref{sec:methodlin} gives, after clearing
denominators,
\begin{equation}\label{eq:tb0902-division2}
 27F(x,y)=H_2(x,y)
 \bigl(9y^2-3(x^2+x)y+(x^2+x)^2+9x\bigr)-Q_2(x),
\end{equation}
where
\[
 Q_2(x)=x^6+3x^5+30x^4+10x^3+9x^2+27x+54.
\]
For this remainder, we seek a representation \eqref{eq:tb0902-aux-iden} with the same $d$, possibly up to sign, and the search returned 
\begin{equation*}
	2Q_2(x)=3U_2(x)^2-(x^2-2x)V_2(x)^2,
\end{equation*}
where
\[
 U_2(x)=3x^3+2x^2-x-6,\qquad V_2(x)=5x^2+8x+3=(x+1)(5x+3).
\]

Combining this with \eqref{eq:tb0902-division2}, we conclude that every integer solution of $F(x,y)=0$ satisfies
\begin{equation}\label{eq:tb0902-second-aux}
 H_2(x,y)t_2(x,y)=3U_2(x)^2-(x^2-2x)V_2(x)^2,
\end{equation}
where
\[
 t_2(x,y)=18y^2-6(x^2+x)y+2(x^2+x)^2+18x.
\]
To obtain an identity of the form \eqref{eq:identity} with norm coefficient $H_1$, observe that
\[
 H_2-(x^2-2x)=3(y+x)=3H_1.
\]
Adding $H_2V_2^2$ to both sides of \eqref{eq:tb0902-second-aux} therefore gives
\[
 H_2(t_2+V_2^2)=3U_2^2+3H_1V_2^2.
\]
The polynomial $t_2+V_2^2$ has every coefficient divisible by $3$.
Dividing by $3$ yields
\begin{equation}\label{eq:tb0902-identity2}
 H_2(x,y)T_2(x,y)=U_2(x)^2+H_1(x,y)V_2(x)^2,
\end{equation}
where the quotient is explicitly
\begin{equation*}
\begin{aligned}
 T_2(x,y)&=\frac{t_2(x,y)+V_2(x)^2}{3}\\
 &=6y^2-(2x^2+2x)y+9x^4+28x^3+32x^2+22x+3.
\end{aligned}
\end{equation*}
We will use this together with \eqref{eq:tb0902-identity1k} for $k=3$, namely
\begin{equation}\label{eq:tb0902-identity1}
 H_1(x,y)T_1(x,y)=U_1(x)^2+H_2(x,y)V_1(x)^2,
\end{equation}
where
\[
 T_1(x,y)=t_1(x,y)+3V_1(x)^2
 =2y^2-2xy+5x^2+8x+3.
\]

The derivation also gives the polynomial identities
\[
 H_1T_1-U_1^2-H_2V_1^2=2F,\qquad
 H_2T_2-U_2^2-H_1V_2^2=18F.
\]
We could present a much shorter proof by just stating the final identities and checking these two equalities by expansion. The preceding derivation explains how the crucial identities \eqref{eq:tb0902-identity2} and \eqref{eq:tb0902-identity1} were found.

Now we are ready to derive the final contradiction. First observe that $H_2=x(x+1)+3y>0$. Also $H_1>0$: otherwise $x=-a$ with
$a\ge y\ge2$, and
\[
 (a-y)(a^2+ay+y^2-a)\ge0
 \quad\Longrightarrow\quad
 y^3-ay\le a^3-a^2<a^4-a+2,
\]
contrary to the equation. Further, no odd prime $p$ can divide both $H_1$ and $H_2$. Indeed, $H_1\equiv H_2\equiv0\pmod p$ gives $y\equiv-x\pmod p$ and $x(x-2)\equiv0\pmod p$. Thus $x\equiv0 \pmod p$ or $x\equiv2 \pmod p$. Substituting this together with $y\equiv-x\pmod p$ in the original equation \eqref{eq:tb0902-short-main} would result in $-2\equiv0 \pmod p$ or $-32\equiv0 \pmod p$, but neither of these congruences is possible modulo an odd prime $p$.

Write $H_1=2^{e_1}h_1$ and $H_2=2^{e_2}h_2$, with $h_1,h_2$ positive and odd. Since $H_1,H_2$ have no common odd prime divisor, $h_i$ is the largest divisor of $H_i$ coprime to $2H_{3-i}$ for $i=1,2$. The right-hand sides of both identities \eqref{eq:tb0902-identity2} and \eqref{eq:tb0902-identity1} are nonzero: $U_1=x^2+x+2>0$, while
$V_2=(x+1)(5x+3)$ can vanish at an integer only when $x=-1$, and $U_2(-1)=-6$.

If $J(-H_2,h_1)=-1$, Proposition~\ref{prop:norm-common-divisor} applied to \eqref{eq:tb0902-identity1} gives an odd prime
$p\mid U_1,V_1$. This contradicts $U_1-xV_1=2$. Hence $J(-H_2,h_1)=1$.

Similarly, if $J(-H_1,h_2)=-1$, Proposition~\ref{prop:norm-common-divisor} applied to \eqref{eq:tb0902-identity2} gives an odd prime $p$ such that
\[
p\mid H_2,T_2,U_2,V_2,\qquad J(-H_1,p)=-1.
\]
Since $V_2=(x+1)(5x+3)$, either $p\mid x+1$, in which case $U_2(-1)=-6$ gives $p=3$, or $p\mid 5x+3$. For the latter case, polynomial division gives
\[
125U_2=(5x+3)(75x^2+5x-28)-666, \qquad 666=2\cdot3^2\cdot37.
\]
Thus $p=3$ or $p=37$. For $p=3$, the congruences $F\equiv H_2\equiv0$ force $(x,y)\equiv(0,2)$, giving $J(-H_1,3)=1$, a contradiction. For $p=37$, the congruences $U_2\equiv V_2\equiv H_2\equiv0$ force $(x,y)\equiv(29,6)$, where $T_2\equiv8\pmod{37}$, contradicting $p\mid T_2$. Therefore
\begin{equation}\label{eq:tb0902-short-Jacobi}
	J(-H_2,h_1)=1,\qquad J(-H_1,h_2)=1.
\end{equation}

We now use the two cases in \eqref{eq:tb0902-short-local}. If $x\equiv2\pmod8$, then $H_1\equiv H_2\equiv4\pmod8$, so
$e_1=e_2=2$. Moreover,
\[
 H_1+H_2=x^2+2x+4y\equiv0\pmod{16},
\]
and hence $h_1+h_2\equiv0\pmod4$. Exactly one of $h_1,h_2$ is
$3\pmod4$. Put $\alpha=(h_1-1)/2$ and $\beta=(h_2-1)/2$.
Quadratic reciprocity gives
\[
 J(-H_2,h_1)J(-H_1,h_2)=J(-h_2,h_1)J(-h_1,h_2)
 =(-1)^{\alpha+\beta+\alpha\beta}=-1,
\]
contradicting \eqref{eq:tb0902-short-Jacobi}.

If $x\equiv3\pmod8$, then $H_1=h_1\equiv5\pmod8$, $H_2=2h_2$,
and $h_2\equiv1\pmod4$. Thus $J(-2,h_1)=-1$ and $J(-1,h_2)=1$.
Equation~\eqref{eq:tb0902-short-Jacobi} gives $J(h_2,h_1)=-1$ and
$J(h_1,h_2)=1$, whereas $h_1\equiv h_2\equiv1\pmod4$ makes these
symbols equal by quadratic reciprocity. This is the final contradiction, which proves that equation \eqref{eq:tb0902-short-main} has no integer solutions.
\end{proof}

\subsection{Equations of sizes 33 and 34}\label{sec:H33-34}

This subsection outlines the proof of Theorem \ref{prop:table-one} for all equations in Table~\ref{tab:2varH34} of size $h=33$, and for $16$ out of $21$ equations of size $h=34$. 

We present the corresponding certificates below. Nineteen equations use the constant quadratic norms in Table~\ref{tab:compact-norms}; two use the coupled quadratic characters displayed afterwards. 

\begingroup
\small
\setlength{\tabcolsep}{4pt}
\renewcommand{\arraystretch}{1.12}
\begin{longtable}{@{}p{0.36\textwidth}p{0.60\textwidth}@{}}
\caption{Constant quadratic norm certificates. Each row gives $HT=U^2+dV^2$; $T$ can be derived from
\eqref{eq:Tlin} or \eqref{eq:Tgeneral} with $\kappa$ given in the table. Two rows for the same equation mean that both identities are used.}
\label{tab:compact-norms}\\
\toprule
Equation, $d$ and $\kappa$ & Auxiliary polynomials $H$, $U$ and $V$ \\
\midrule
\endfirsthead
\multicolumn{2}{l}{\textit{Table \thetable\ continued}}\\
\toprule
Equation, $d$ and $\kappa$ & Auxiliary polynomials $H$, $U$ and $V$ \\
\midrule
\endhead
\bottomrule
\endfoot
$y^3+y^2+xy=x^4+1$\par\smallskip
$d=1,\quad\kappa=1$\par & 
$\begin{aligned} H&=x^{2} + 2 x + 2 y,\\ U&=x^{3} + 3 x^{2} + 2,\\ V&=2 - 3 x^{2}.\end{aligned}$
\\\addlinespace[5pt]
$y^3+xy=x^4+x+3$\par\smallskip
$d=2,\quad\kappa=1$\par & 
$\begin{aligned} H&=2 x^{2} - 5 x + y^{2} + 2 y \left(x - 2\right) + 6,\\ U&=x^{4} - 2 x^{3} + 2 x^{2} - 3 x + 11,\\ V&=2 \left(x^{3} - 3 x^{2} + 7 x - 4\right).\end{aligned}$
\\\addlinespace[5pt]
$y^3+xy=x^4+x+3$\par\smallskip
$d=2,\quad\kappa=1$\par & 
$\begin{aligned} H&=4 x^{2} + 4 x + 4 y^{2} + 5 y + 3,\\ U&=4 \left(8 x^{4} - 11 x^{2} + 2 x - 10\right),\\ V&=4 \left(13 - 16 x^{2}\right).\end{aligned}$
\\\addlinespace[5pt]
$y^3+xy=x^4-x-3$\par\smallskip
$d=-3,\quad\kappa=1$\par & 
$\begin{aligned} H&=x^{2} + 4 x + 2 y + 1,\\ U&=x^{3} + 6 x^{2} + 25 x + 22,\\ V&=3 x^{2} + 14 x + 13.\end{aligned}$
\\\addlinespace[5pt]
$y^3+xy=x^4+x-3$\par\smallskip
$d=2,\quad\kappa=1$\par & 
$\begin{aligned} H&=8 x^{2} - 2 x + y^{2} + 2 y \left(x + 2\right) + 11,\\ U&=\left(x^{2} + 2\right) \left(x^{2} - 20 x + 12\right),\\ V&=\left(5 x + 11\right) \left(x^{2} + 2\right).\end{aligned}$
\\\addlinespace[5pt]
$y^3+xy+y=x^4-x+1$\par\smallskip
$d=2,\quad\kappa=1$\par & 
$\begin{aligned} H&=- 4 x + 2 y + 5,\\ U&=8 x - 11,\\ V&=2 \left(x^{2} - 4 x + 2\right).\end{aligned}$
\\\addlinespace[5pt]
$y^3+y^2=x^4+x+4$\par\smallskip
$d=7,\quad\kappa=4$\par & 
$\begin{aligned} H&=2 x + y + 3,\\ U&=2 x^{2} + 8 x + 9,\\ V&=2 x + 1.\end{aligned}$
\\\addlinespace[5pt]
$y^3-y^2=x^4+2x+2$\par\smallskip
$d=-2,\quad\kappa=1$\par & 
$\begin{aligned} H&=x^{2} - 2 x + 2 y - 3,\\ U&=x^{3} - 3 x^{2} + 11 x - 5,\\ V&=3 x^{2} - 8 x + 3.\end{aligned}$
\\\addlinespace[5pt]
$y^3+y^2=x^4+2x-2$\par\smallskip
$d=-6,\quad\kappa=1$\par & 
$\begin{aligned} H&=- 2 y + \left(x - 1\right)^{2},\\ U&=\left(x - 1\right)^{3} - 4,\\ V&=x^{2} + 1.\end{aligned}$
\\\addlinespace[5pt]
$y^3-y^2+xy=x^4+2$\par\smallskip
$d=-5,\quad\kappa=4$\par & 
$\begin{aligned} H&=x^{2} + 5 x - 2 y + 5,\\ U&=2 x^{3} + 15 x^{2} + 35 x + 19,\\ V&=3 x^{2} + 3 x + 5.\end{aligned}$
\\\addlinespace[5pt]
$y^3+y=x^4+x+6$\par\smallskip
$d=7,\quad\kappa=1$\par & 
$\begin{aligned} H&=3 x^{2} + 6 x + 2 y^{2} - y \left(x - 2\right) + 8,\\ U&=5 x^{4} + 11 x^{3} + 13 x^{2} + 8,\\ V&=x^{4} - 3 x^{3} - 13 x^{2} - 20 x - 8.\end{aligned}$
\\\addlinespace[5pt]
$y^3+y=x^4+x-6$\par\smallskip
$d=3,\quad\kappa=4$\par & 
$\begin{aligned} H&=3 x^{2} - 3 x + y^{2} + y \left(2 x + 1\right) + 5,\\ U&=2 x^{4} - 10 x^{3} + 11 x^{2} - 19 x - 11,\\ V&=4 x^{3} - 3 x^{2} + 9 x - 15.\end{aligned}$
\\\addlinespace[5pt]
$y^3+y=x^4+2x+4$\par\smallskip
$d=-2,\quad\kappa=1$\par & 
$\begin{aligned} H&=- 2 x + y + 2,\\ U&=x^{2} - 4 x + 8,\\ V&=2 x - 5.\end{aligned}$
\\\addlinespace[5pt]
$y^3+xy=x^4+x+4$\par\smallskip
$d=3,\quad\kappa=1$\par & 
$\begin{aligned} H&=y - 1,\\ U&=x^{2},\\ V&=1.\end{aligned}$
\\\addlinespace[5pt]
$y^3+xy=x^4+x+4$\par\smallskip
$d=6,\quad\kappa=2$\par & 
$\begin{aligned} H&=4 x + 2 y + 3,\\ U&=4 \left(x^{2} + 4 x + 2\right),\\ V&=3.\end{aligned}$
\\\addlinespace[5pt]
$y^3+xy=x^4-x-4$\par\smallskip
$d=3,\quad\kappa=4$\par & 
$\begin{aligned} H&=x^{2} + y^{2} + y \left(x - 2\right) + 4,\\ U&=2 x^{4} - 2 x^{3} - 2 x^{2} - 7 x - 4,\\ V&=2 x^{3} + 5 x - 4.\end{aligned}$
\\\addlinespace[5pt]
$y^3+xy=x^4+x^2+2$\par\smallskip
$d=-2,\quad\kappa=2$\par & 
$\begin{aligned} H&=- 4 x + 2 y + 1,\\ U&=2 \left(2 x^{2} - 8 x + 7\right),\\ V&=3 \left(4 x - 3\right).\end{aligned}$
\\\addlinespace[5pt]
$y^3+xy=x^4+x^2+2$\par\smallskip
$d=-210,\quad\kappa=10$\par & 
$\begin{aligned} H&=20 x + 10 y - 19,\\ U&=10 \left(10 x^{2} + 40 x - 11\right),\\ V&=40 x - 17.\end{aligned}$
\\\addlinespace[5pt]
$y^3+y^2+y=x^4+x+2$\par\smallskip
$d=-21,\quad\kappa=4$\par & 
$\begin{aligned} H&=2 x^{2} - x y + 7 x + y^{2} + 7,\\ U&=2 x^{4} + 5 x^{3} + 31 x^{2} + 57 x + 39,\\ V&=x^{3} + x^{2} - x - 3.\end{aligned}$
\\\addlinespace[5pt]
$y^3+y^2-y=x^4+x+2$\par\smallskip
$d=7,\quad\kappa=32$\par & 
$\begin{aligned} H&=x + 2 y + 2,\\ U&=16 x^{2} + x + 2,\\ V&=3 \left(x + 2\right).\end{aligned}$
\\\addlinespace[5pt]
$y^3-y=x^4-x^2+x+2$\par\smallskip
$d=7,\quad\kappa=4$\par & 
$\begin{aligned} H&=x^{2} + x + y,\\ U&=2 x^{3} + 3 x^{2} + 1,\\ V&=x^{2} - 1.\end{aligned}$
\\\addlinespace[5pt]
$y^3-y=x^4-x^2+x+2$\par\smallskip
$d=7,\quad\kappa=4$\par & 
$\begin{aligned} H&=3 x + 8 y + 2,\\ U&=5 x^{2} - 13 x - 58,\\ V&=17 x^{2} + x - 6.\end{aligned}$
\\\addlinespace[5pt]
$y^3-y=x^4-x^2+x-2$\par\smallskip
$d=-5,\quad\kappa=4$\par & 
$\begin{aligned} H&=x^{2} + x - 2 y - 3,\\ U&=2 x^{3} + 3 x^{2} - 15 x - 7,\\ V&=x^{2} - 5 x - 3.\end{aligned}$
\\\addlinespace[5pt]
\end{longtable}
\endgroup

\medskip
\noindent\textbf{Two further coupled quadratic constructions.}

For $y^3+2y=x^4+x+4$, put
\[
 A=y-4x-5,\quad B=y-1,\quad C=y-x+6,\quad D=y-2,
\]
\[
 R_1=332+580x+262x^2-4x^3,\qquad
 R_2=72-x+2x^2-x^3.
\]
The four norm identities on the curve are
\begin{equation*}
\begin{aligned}
 AT_A&=R_1^2-15B\cdot13^2,& BT_B&=(2x^2)^2-A,\\
 CT_C&=R_2^2+3D\cdot19^2,& DT_D&=(x^2)^2-C.
\end{aligned}
\end{equation*}
Here $F=y^3+2y-x^4-x-4$. If $N_Z$ denotes the displayed
right-hand side corresponding to $Z\in\{A,B,C,D\}$, the cofactor is
$T_Z=(N_Z-q_ZF)/Z$, where
\[
 (q_A,q_B,q_C,q_D)=(-16x^2+1072x+764,-4,-x^2+3x+15,-1).
\]
Each division is exact in $\Z[x,y]$.

For $y^3+y=x^4+2x-4$, put
\[
 C=y-2x+4,\qquad D=2y-x+5,
\]
\[
 R_1=144-120x+24x^2-3x^3,\qquad
 R_2=405-24x+219x^2-24x^3.
\]
The two identities are
\begin{equation*}
 CT_C=R_1^2+3D\cdot16^2,\qquad
 DT_D=R_2^2+3C\cdot332^2.
\end{equation*}
Here $F=y^3+y-x^4-2x+4$ and, with $N_C,N_D$ denoting the two
right-hand sides, the integral cofactors are again
$T_Z=(N_Z-q_ZF)/Z$, now with
\[
 q_C=-9x^2+72x-288,\qquad
 q_D=-576x^2+10440x-46728.
\]
Both divisions are exact in $\Z[x,y]$.

Once the listed identities are established, the proofs are straightforward and are not presented here. We refer the reader to \cite{certificates}
for detailed arguments and Lean formalizations.

\section{Extending the scope of the method}\label{sec:extensions}

All equations in Table \ref{tab:2varH34} are of the form \eqref{eq:shape}, that is, cubic in $y$ and quartic in $x$, and the method was used to prove that the equations have no integer solutions. In Section \ref{sec:length} we demonstrate that the method is applicable to higher-degree equations. In Section \ref{sec:solexists} we demonstrate that it can be used to solve certain two-variable equations whose set of integer solutions is non-empty.

\subsection{Higher-degree examples of small length}\label{sec:length}

The database of equations ordered by $h$ naturally puts lower-degree equations first. To allow higher-degree equations as well, we also consider a database of two-variable equations ordered by length. The \emph{length} of a polynomial $P$ consisting of monomials of degrees $d_1, \dots, d_k$ with coefficients $a_1, \dots, a_k$ is defined by
\begin{equation}\label{eq:ldef}
	l(P) = \sum_{i=1}^k  \log_2(|a_i|) + \sum_{i=1}^k d_i = \log_2 L(P), \quad \text{where} \quad L(P) = \prod_{i=1}^k\left(|a_i|\cdot 2^{d_i}\right).
\end{equation}
Ordering polynomials by length $l(P)$ is equivalent to ordering them by $L(P)$.

Before this work, the shortest open two-variable equation was equation \eqref{eq:2y3pxypx4p1}, now solved in Example \ref{ex:2y3pxypx4p1}. We next generated a complete database of all two-variable equations of length up to $12$, and solved Problem \ref{prob:main} for all of them with two exceptions.
Below we present two examples of higher-degree equations solved by our method, which show that its scope is not limited to cubic-quartic equations \eqref{eq:shape}. 

We start with an equation quartic in both $x$ and $y$, for which the linear choice of $H$ given by \eqref{eq:Hlin} produces a useful identity \eqref{eq:identity} with constant $d$.

\begin{proposition}\label{prop:easyquatric}
	The equation
	\begin{equation}\label{eq:easyquatric}
		2x^4+x-y^4+2y+1=0
	\end{equation}
	has no integer solutions.
\end{proposition}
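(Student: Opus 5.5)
The plan follows Section~\ref{sec:methodlin}: take $H$ linear in $y$, $H(x,y)=qy+R(x)$ with $q$ a small nonzero integer and $R$ of degree at most~$1$. Dividing $F(x,y)=2x^4+x-y^4+2y+1$ by $H$ in the variable $y$ and multiplying by $q^4$ gives
\[
q^4F(x,y)=H(x,y)\,T_0(x,y)-P(x),\qquad P(x)=-q^4F\bigl(x,-R(x)/q\bigr).
\]
So on the curve we get $H\,T_0=-P(x)$, where $P$ is a quartic in $x$. I will search over small $q$ and small coefficients of $R$ for a choice with $-\kappa P(x)=U(x)^2+dV(x)^2$ for a small $d\in\{1,\pm2,\pm3\}$ and a small $\kappa$. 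This gives an identity \eqref{eq:identity} with $T=\kappa T_0$.

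Natural first candidates are $H=y$, $y\pm1$, $y\pm x$, $y\pm x\pm1$ and $2y+1$. For example, $H=y$ gives the remainder $2x^4+x+1$ and $H=y+x$ gives $x^4-x+1$. For each candidate I would complete the square in $x^2$ after multiplying by $4$ or $8$, and check whether the leftover quadratic has the form $dV^2$ up to $\kappa$. I expect a choice of the shape $y\pm x+c$ or $2y+c$ to work.

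Before choosing $H$, I collect local information. Modulo $2$ the equation gives $y\equiv x+1$. A check modulo $8$ (and modulo $3$ or $5$ if needed) should pin down the residues of $(x,y)$, and hence of $H$ modulo $4$ or $8$. The target is that $H_0$ lands in a residue class excluded by the row of Corollary~\ref{cor:norm-congruence-cases} for the $d$ that was found. I also need a sign argument that $H$ has a definite sign, or that the correct sign of $H$ is forced. Since $U^2+dV^2>0$ when $d>0$, the sign of $H$ equals the sign of $T$. It should be controllable from $y^4\approx2x^4$, which gives $|y|\approx 2^{1/4}|x|$; this is why $H$ with both $x$ and $y$ terms may be preferable, because its sign is then determined.

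Given the identity, I apply Corollary~\ref{cor:norm-congruence-cases}. It yields an odd prime $p$ in the forbidden class that divides $H$, $T$, $U$ and $V$. If $U$ and $V$ are coprime, a B\'ezout identity $EU+GV=c$ gives $p\mid c$, and I finish by solving $F\equiv H\equiv U\equiv V\equiv0\pmod p$ for each such $p$ and checking $T\not\equiv0$. Ideally $V$ is constant, as in Example~\ref{ex:2y3pxypx4p1}; then no prime can divide both $U$ and $V$, and the contradiction is immediate.

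The main obstacle is finding $H$ whose remainder is a genuine norm form. If no linear $H$ with constant $d$ works, I would first try $\kappa\ne1$ and $d=\pm6,\pm7$. If that also fails, I would use the trick from Proposition~\ref{prop:tb0902-short}: reduce squares modulo the remainder to guess a polynomial $d(x)$.
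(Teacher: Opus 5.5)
Your outline follows the paper's method: a linear $H$ as in Section~\ref{sec:methodlin}, then a row of Corollary~\ref{cor:norm-congruence-cases}. But it supplies none of the certificate. There is no $H$, no norm identity, no residue computation and no final prime check, and these make up the whole proof, so as written nothing is established.

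Moreover, the concrete expectations you record would fail.
\begin{itemize}
\item \emph{Your guessed shapes.} Modulo $16$, every solution of \eqref{eq:easyquatric} has either $y$ odd and $x\equiv-2y$, or $y$ even and $x\equiv-2y-3$. All these classes lift $2$-adically, since for fixed $y$ the derivative $8x^3+1$ is odd. For $H=qy+ax+c$ this gives $H\equiv(q-2a)y+c\pmod{16}$ on the first family. So unless $4\mid q-2a$, the $2$-adic data do not confine $H_0$ to $\{3,5\}$ or to $\{5,7\}\pmod 8$, and the rows $d=\pm2$ yield no contradiction. This rules out $y\pm x+c$ and $2y+c$. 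For example, $H=2y-1$ does give a genuine $d=-2$ identity, since $16F=(2y-1)(15-2y-4y^2-8y^3)+32x^4+16x+31$ and $2(32x^4+16x+31)=(8x^2+8)^2-2(8x-1)^2$. Yet $|2y-1|\equiv1$ or $7\pmod 8$ whenever $y\equiv1\pmod 4$, so the Corollary gives nothing there.
\item \emph{Your sign argument.} The real curve has four branches with $y\sim\pm2^{1/4}x$, so every linear $qy+ax+c$ takes both signs on it, and no size estimate fixes the sign of $H$. The rows with $d>0$ need that sign to decide from a congruence on $H$ whether $H_0$ is excluded, so they are unusable here and you must take $d<0$.
\item \emph{A small slip.} $q^4F=HT_0-P$ gives $HT_0=P$ on the curve, not $-P$.
\end{itemize}

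The paper's choice $H=2y-3x+2$ has $q-2a=8$. On the two families this gives $H\equiv 8y+2\equiv10$ or $H\equiv 8y+11\equiv 11\pmod{16}$, so $H_0\equiv3$ or $5\pmod 8$ whatever the sign of $H$. Division gives $16F=HT_0-P$ with $P=49x^4-216x^3+216x^2-160x+32$. Then $98P=2V^2-U^2$ for $U=100x-8$ and $V=49x^2-108x+40$. In fact already $P=(11x^2-12x+8)^2-2(6x^2-2x+4)^2$, so your small-$\kappa$ search would find it with $\kappa=1$, once $x$-coefficients with $4\mid q-2a$ are tried. With $T=-98T_0$ one gets $HT=U^2-2V^2\neq0$. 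The row $d=-2$ then gives a prime $p\equiv3,5\pmod 8$ dividing $H,T,U,V$. The B\'ezout identity $(2602-1225x)U+2500V=2^4\cdot7^2\cdot101$ forces $p=101$, hence $x\equiv-8$ and $y\equiv-13\pmod{101}$. Then $T\equiv72\pmod{101}$, a contradiction. These are exactly the steps your proposal leaves open.
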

\begin{proof}
Let $F(x,y)$ be the left-hand side of \eqref{eq:easyquatric}, and define
\[
H(x,y)=2y-3x+2.
\]
Because $H(x,y)$ is linear in $y$, we proceed as in Section \ref{sec:methodlin}. Treating $F$ and $H$ as polynomials in $y$ with parameter $x$ and performing the polynomial division, we obtain
\[
16F(x,y)=H(x,y)T_0(x,y)-R(x),
\]
where
\[
T_0(x,y)=-8y^3-4(3x-2)y^2-2(3x-2)^2y+16-(3x-2)^3
\]
and
\[
R(x)=49x^4-216x^3+216x^2-160x+32.
\]
Because
$$
98 R(x) = -U(x)^2+ 2 V(x)^2 , \quad \quad U(x) = 100 x - 8, \quad V(x) = 49 x^2 - 108 x + 40,
$$
we conclude that every integer solution to $F(x,y)=0$ satisfies
$$
H(x,y) T(x,y) = U(x)^2 - 2V(x)^2,
$$ 
where $T(x,y)=-98 T_0(x,y)$. This is exactly an identity of the form \eqref{eq:identity}. Because $U(x)=100x-8$ cannot be $0$ for integer $x$, we have $U(x)^2 - 2V(x)^2 \neq 0$. Solving the original equation \eqref{eq:easyquatric} modulo $16$, we conclude that in all cases $H(x,y)$ is equal to either $10$ or $11$ modulo $16$. Hence, its largest positive odd divisor $H_0$ can be either $3$ or $5$ modulo $8$. Hence, by the $d=-2$ case of Corollary \ref{cor:norm-congruence-cases}, integers $H$, $T$, $U$, $V$ must have a common prime factor $p$ equal to either $3$ or $5$ modulo $8$. 

On the other hand, direct polynomial division gives the B\'ezout identity
\[
(2602-1225x)U(x)+2500 V(x)=79184=2^4\cdot 7^2\cdot 101.
\]
Hence the only possible common prime factors of $U$ and $V$ are $2$, $7$ and $101$. Because $p$ is equal to either $3$ or $5$ modulo $8$, we must have $p=101$, while $101\mid U(x)$ implies that $x\equiv -8\pmod{101}$. Then $101\mid H(x,y)$ implies that $y\equiv -13\pmod{101}$. But in this case $T(x,y)\equiv 72\pmod{101}$, contradicting $p\mid T(x,y)$. Thus, equation \eqref{eq:easyquatric} has no integer solutions.
\end{proof}

In the next example, the equation is quintic in $x$ and quartic in $y$, and the resulting identity \eqref{eq:identity} has nonconstant $d$.	

\begin{proposition}\label{prop:quintic}
	The equation
	\begin{equation}\label{eq:quintic}
	2x^5+xy+y^4+1=0
	\end{equation}
	has no integer solutions.
\end{proposition}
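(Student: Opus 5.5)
We would follow the pattern of Proposition~\ref{prop:tb0902-short}: first collect local and sign information, then produce an identity \eqref{eq:identity} with nonconstant $d$, and finish with Proposition~\ref{prop:norm-common-divisor} and Jacobi symbol bookkeeping.

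\textbf{Local and sign information.} Write $F(x,y)=2x^5+xy+y^4+1$. Reducing modulo $2$ gives $xy+y^4+1\equiv 0$, which forces $y$ odd and $x$ even. Working modulo $16$ (and possibly $3$ or $5$) should then pin down the residues of $x$ and $y$. Since $y^4+1>0$ and $xy$ is small compared with $|x|^5$ unless $y$ is large, we need $x<0$. Put $x=-a$ with $a>0$ even; the equation becomes
\[
y^4+1=a(2a^4+y).
\]
This already gives the identity
\[
a\,(2a^4+y)=(y^2)^2+1,
\]
an identity \eqref{eq:identity} with $d=1$, $U=y^2$, $V=1$, in which the roles of $x$ and $y$ are swapped. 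Since $U$ and $V$ are coprime, Corollary~\ref{cor:norm-congruence-cases} with $d=1$ and $d=2$ shows that every odd divisor of $y^4+1$ is $\equiv1\pmod 8$. Using also $y^4+1=(y^2-1)^2+2y^2$, we get that the odd parts of $a$ and of $2a^4+y$ are $1\pmod 8$. Combined with the local data, this restricts $y$ modulo $8$ and the $2$-adic valuation of $a$, but it will probably not give a contradiction on its own.

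\textbf{Second identity.} Next we would look for a linear $H=qy+R(x)$ or $H=qx+R(y)$, viewing $F$ as a quartic in $y$, whose division remainder $P$ satisfies $\kappa P=U^2+d\,V^2$ with $d$ a low-degree polynomial. As in Proposition~\ref{prop:tb0902-short}, we would find $d$ by computing powers modulo the remainder polynomial and spotting a square congruent to a small polynomial. Then we would shift by a multiple of $H$, as in \eqref{eq:tb0902-identity1k}, so that the coefficient of $V^2$ becomes $\pm$ a polynomial already in play, such as $a$, $2a^4+y$, or $H$ itself. This would yield a coupled pair
\[
H_1T_1=U_1^2+\epsilon_1H_2V_1^2,\qquad H_2T_2=U_2^2+\epsilon_2H_1V_2^2 .
\]

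\textbf{Contradiction.} After that the argument is routine:
\begin{itemize}
\item verify that the right-hand sides are nonzero and determine the signs of $H_1$ and $H_2$;
\item show that $H_1$ and $H_2$ share no odd prime;
\item use Proposition~\ref{prop:norm-common-divisor} together with B\'ezout identities for $(U_i,V_i)$ to force $J(-\epsilon_1H_2,h_1)=J(-\epsilon_2H_1,h_2)=1$, after eliminating the finitely many candidate primes by checking the system modulo $p$;
\item apply quadratic reciprocity to the odd parts $h_1,h_2$, fed by the congruences modulo $16$, to contradict these equalities.
\end{itemize}

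The main obstacle is the search for the second identity, that is, finding $H$ and a polynomial $d$ that make the remainder a norm. If the identity $a(2a^4+y)=y^4+1$ together with one extra constant-$d$ identity already clashes with the congruences modulo $16$, the coupled construction may not be needed.
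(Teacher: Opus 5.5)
\textbf{There is a genuine gap: the decisive identity is never produced.}

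Your preliminary work is correct. Reduction modulo $2$ gives $y$ odd and $x$ even, and $x=-a<0$. The two representations $a(2a^4+y)=(y^2)^2+1=(y^2-1)^2+2y^2$ force $v_2(a)=1$, $a/2\equiv1\pmod 8$ and $y\equiv1\pmod 8$, hence $x\equiv14\pmod{16}$. But that is where the actual argument stops. These classes contain $2$-adic solutions: for odd $y$, Hensel's lemma applies in $x$ because $\partial F/\partial x=10x^4+y$ is odd. So they cannot give a contradiction by themselves. Everything after them (the second identity, the candidate primes, the final reciprocity clash) is only a template. You yourself call finding $H$ and $d$ ``the main obstacle''. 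In this method that search \emph{is} the proof, so what remains is not routine.

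The missing ingredient in the paper is $H=y^2-2x$. It is linear in $x$, a shape you listed, but the division must be done with $F$ viewed as a quintic in $x$. It gives $16F=-HT+P(y)$ with
$T=\sum_{j=0}^{4}(2x)^jy^{8-2j}+8y$ and
$P(y)=y^{10}+16y^4+8y^3+16=(y^5-4)^2+2y\bigl(2y(y+1)\bigr)^2$.
So on the curve $HT=U^2+2yV^2$ with $d=2y$.

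No second division identity is then needed. For every prime $p\mid y$ the equation gives $2x^5\equiv-1$, so $p\nmid H$ and $H\equiv-2x\equiv(2x^3)^2\pmod p$; hence $\gcd(H,2y)=1$ and $J(H,|y|)=1$. Since $x<0$ we have $H>0$, and the equation modulo $8$ forces $x\equiv2\pmod4$, so $H\equiv5\pmod8$. Reciprocity then gives $J(y,H)=1$ and $J(2,H)=-1$, so $J(-2y,H)=-1$.

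After checking that $U^2+2yV^2\neq0$, Proposition~\ref{prop:norm-common-divisor} with $H_0=H$ yields an odd prime $p$ dividing $H,T,U,V$ with $p\nmid 2y$. Then $p\mid V$ forces $y\equiv-1\pmod p$, so $U\equiv-5$ and $p=5$. This gives $x\equiv3\pmod5$ and $T\equiv2\pmod5$, a contradiction.

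The congruence step can be written as a second norm identity, $y\cdot\bigl(-(2x^2+2xy^3+y)\bigr)=(2x^3)^2-H$ on the curve. So your coupled-pair picture is right in spirit, but both halves have to be exhibited. Your identity $a(2a^4+y)=y^4+1$ is not needed at all.
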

\begin{proof}
	Let $F(x,y)=2x^5+xy+y^4+1$ be the left-hand side of \eqref{eq:quintic}, and define
	\[
	H(x,y)=y^2-2x.
	\]
	This polynomial is quadratic in $y$, but it is linear in $x$, and we can proceed as in Section \ref{sec:methodlin} with variables $x$ and $y$ interchanged. Treating $F$ and $H$ as polynomials in $x$ with parameter $y$ and performing the polynomial division, with denominators cleared, we obtain
	\[
	16F(x,y)=-H(x,y)T(x,y)+P(y),
	\]
	where
	\[
	T(x,y)=\sum_{j=0}^{4}(2x)^j y^{8-2j}+8y
	\]
	and
	\[
	P(y)=y^{10}+16y^4+8y^3+16.
	\]
	Since
	\[
	P(y)=U(y)^2+2yV(y)^2, \qquad U(y)=y^5-4, \qquad V(y)=2y(y+1),
	\]
	we 
	conclude that every integer solution to $F(x,y)=0$ satisfies
	\begin{equation}\label{eq:quintic-cofactor}
	H(x,y)T(x,y)=U(y)^2+2yV(y)^2.
	\end{equation}
	This is exactly an identity of the form \eqref{eq:identity}, with $d=2y$.
	
	It is easy to see that \eqref{eq:quintic} implies $x<0$. Hence, $H(x,y)>0$. Also, solving the original equation \eqref{eq:quintic} modulo $8$, we conclude that $y$ is odd and $H(x,y)$ is equal to $5$ modulo $8$.
	
	Further, the right-hand side in \eqref{eq:quintic-cofactor} cannot be $0$. Indeed, if
	\[
	U^2+2yV^2=0,
	\]
	then either $U=V=0$, which is impossible since $U=y^5-4\neq 0$ for integer $y$, or
	\[
	-2y=\left(\frac{U}{V}\right)^2.
	\]
	But $y$ is odd, so $-2y$ is equal to $2$ modulo $4$, and therefore cannot be a square of a rational number.
	
	Further, for every prime $p\mid y$, the original equation \eqref{eq:quintic} 
	gives
	$
	2x^5\equiv-1\pmod p.
	$
	In particular, $p \nmid x$, hence $p \nmid H$. Because $H$ is odd, this implies that
	$$
		\gcd(H,2y)=1.
	$$
	Further, 
	\[
	H\equiv-2x\equiv(2x^3)^2\not\equiv0\pmod p.
	\]
	It follows that
	$
	J(H,|y|)=1.
	$
	Since $H\equiv5\pmod8$, in particular $H\equiv1\pmod4$, quadratic reciprocity then gives
	$
	J(|y|,H)=1.
	$
	Because $J(-1,H)=1$, this implies that $J(y,H)=1$. But $H\equiv5\pmod8$ implies that
	$
	J(2,H)=-1.
	$
	Therefore
	\[
	J(-2y,H) =J(-1,H)J(2,H)J(y,H) =-1.
	\]
	
	Hence, Proposition \ref{prop:norm-common-divisor}, applied to \eqref{eq:quintic-cofactor} with $d=2y$ and $H_0=H$, implies that there exists an odd prime $p$ that divides $H,T,U,V$. Since $\gcd(H,2y)=1$, we have $p\nmid2y$, and $p\mid V=2y(y+1)$ therefore
	implies
	\[
	y\equiv-1\pmod p.
	\]
	But then
	\[
	U(y)=y^5-4 \equiv (-1)^5 - 4 = -5 \pmod p,
	\]
	hence $p \mid U$ is possible only if $p=5$. Since $5\mid H$ and $y\equiv-1\pmod5$, we obtain
	$
	x\equiv3\pmod5.
	$
	Substituting these congruences into the formula for $T$ gives
	$
	T(x,y)\equiv2\pmod 5,
	$
	which contradicts $p \mid T$. Hence the equation \eqref{eq:quintic} has no integer solutions.
\end{proof}

\subsection{Equations with integer solutions}\label{sec:solexists}

So far, we have used our method to prove that an equation has no integer solution. In this subsection, we demonstrate that it can also be used to certify that an equation with some obvious integer solutions has no other integer solutions. Specifically, we determine the complete set of integer solutions to the equations $x^4+y^3+x-2y=0$ and $x^4+y^3+x-2y+1=0$, which were left open in \cite[Section 3.5.1]{grechuk2024polynomial}.

\begin{proposition}\label{prop:solexists}
	The only integer solutions to the equation
	\begin{equation}\label{eq:solexists-0}
		x^4+y^3+x-2y=0
	\end{equation}
	are $(x,y)=(-1,0)$ and $(x,y)=(0,0)$, while the only integer solutions to the equation
	\begin{equation}\label{eq:solexists-1}
		x^4+y^3+x-2y+1=0
	\end{equation}
	are $(x,y)=(-1,1)$ and $(x,y)=(0,1)$.
\end{proposition}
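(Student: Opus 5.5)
We treat both equations with the quadratic identities method of Section~\ref{sec:method}, writing them in the shape \eqref{eq:shape} as $F_c(x,y)=y^3-2y+x^4+x+c=0$ with $c\in\{0,1\}$. The known solutions must be excluded before any contradiction can arise. Each of them has $x\in\{0,-1\}$, where $x^4+x=0$, so the first step is to check directly that $x=0$ and $x=-1$ leave only $y^3-2y+c=0$. For $c=0$ this gives $y=0$, and for $c=1$ it gives $y=1$. It then remains to prove that there are no solutions with $x\notin\{0,-1\}$, i.e.\ with $x(x+1)\ne0$; on this range $x^4+x\ge 2$.

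Next we collect elementary restrictions. Since $x^4+x+c>0$ on this range, we need $y^3-2y<0$, which forces $y<0$ and then $y\le -2$. Reducing modulo small powers of $2$ and of $3$ should give congruence conditions on $(x,y)$. We will then look for $H$ of the form \eqref{eq:Hlin} or \eqref{eq:Hquad}. The natural first candidates are $H=-y$, which gives $(-y)(2-y^2)\cdot(-1)=x^4+x+c$, and linear choices $H=qy+R(x)$ with $R$ of degree at most $2$. The remainder $-P(x)$, or $R_H(x)$, must factor as $\kappa^{-1}(U^2+dV^2)$ with small $d$.

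A useful guide, as in Proposition~\ref{prop:tb0902-short}, is the fact that the remainder vanishes at the known solutions. Since $x(x+1)$ divides the remainder generically, the identity will likely take the form $HT=x^2(x+1)^2Q(x)$ or $HT=U^2+dV^2$ with $U,V$ sharing the factor $x(x+1)$, as in Proposition~\ref{prop:h32-xy4}. The case $p\mid x(x+1)$ then has to be treated separately, exactly as the case $p\mid A$ was there. For that case we would use $p\mid x$ or $p\mid x+1$, substitute into $F_c\equiv0$ to get $y^3-2y+c\equiv0\pmod p$, and combine this with $p\mid H$ and $p\mid T$ to force $p\mid$ a small constant.

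Once the identity is found, the finishing argument follows the template of Proposition~\ref{prop:h32-2xminusy}. From the modular restrictions we show that $H_0$ lies in the bad residue classes of Corollary~\ref{cor:norm-congruence-cases}. We then take the prime $p$ that Corollary~\ref{cor:norm-congruence-cases} provides and use the B\'ezout identity for the coprime parts of $U$ and $V$ to limit $p$ to a finite list. Each remaining $p$ is excluded by solving $F\equiv H\equiv0\pmod p$ and checking that $T\not\equiv0\pmod p$. If a single identity does not suffice, we would use two coupled identities with nonconstant $d$, as in Proposition~\ref{prop:tb0902-short}, and finish with the reciprocity product.

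The main obstacle is the search for $(q,B,C)$ that makes $R_H$ a norm form. A further difficulty is that the known solutions must lie outside the contradiction region. For example, at $(0,0)$ we have $H=R(0)$, so the congruence class of $H_0$ is not obstructed there. The local conditions therefore have to exclude exactly the cases with $x\in\{0,-1\}$, or else those cases must be routed through the $p\mid x(x+1)$ branch.
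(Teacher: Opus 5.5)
Your proposal stops where the actual proof has to start. You never exhibit $H$, $T$, $U$, $V$ and $d$, and you yourself name the search for them as the main obstacle. Everything after ``we will then look for $H$'' is a template, so nothing has been proved for $x\notin\{0,-1\}$.

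The missing ingredient is a concrete identity, and a single linear $H$ handles both equations at once. Write the constant as $e\in\{0,1\}$ and put
$R=x^2+x+e-1$, $H=R-y$, $T=4(y^2+Ry+R^2-2)$, $U=2x^3+3x^2+(3e-2)x-2$, $V=x(x+e)$.
Direct expansion gives $HT-U^2-3V^2=-4(x^4+y^3+x-2y+e)$.
\begin{itemize}
\item Modulo $2$ the equation forces $y\equiv e$, so $H$ is odd.
\item Modulo $3$ it forces $x^2+x-y+e\equiv 0$, so $H\equiv 2\pmod 3$.
\item If $H>0$, the $d=3$ row of Corollary~\ref{cor:norm-congruence-cases} gives an odd prime $p\equiv2\pmod3$ dividing $U$ and $V$. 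Then $p\mid x(x+e)$ gives $U\equiv-2\pmod p$, a contradiction.
\item The right-hand side $U^2+3V^2$ is nonzero, because $V=0$ forces $U=-2$.
\end{itemize}

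Your treatment of the known solutions is also misdirected. No congruence condition can ``exclude exactly the cases with $x\in\{0,-1\}$''. The known solutions satisfy every congruence that follows from $F\equiv0$, and their residue classes contain infinitely many other pairs. What separates them is the sign of $H$, an archimedean input. At all four known solutions $H=-1$, so $H_0=1$ and the corollary is silent; indeed $H\equiv2\pmod3$ with $H<0$ gives $H_0\equiv1\pmod3$.

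The paper handles $H<0$ by noting that then $T<0$. Since $T=(2y+R)^2+3R^2-8$, this forces $|R|\le1$, hence $x\in\{-2,-1,0,1\}$, and these values are checked directly. In your framework even this branch disappears. Your bound $y\le-2$ for $x\notin\{0,-1\}$ yields $H\ge -1+2=1$ at once. That bound needs $x^4+x+c\ge2$, not merely positivity, since $y=1$ also gives $y^3-2y<0$.

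Finally, the heuristic you propose as a search guide is false. At a solution the remainder $-P(x)$ equals $HT_0$, which vanishes only if $H$ or $T_0$ does. For the working identity, $U^2+3V^2=4$ at $x\in\{0,-1\}$. Insisting that $x(x+1)$ divide $U$ and $V$ would have steered your search away from it.
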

\begin{proof}
	Let us write both equations as
	\begin{equation}\label{eq:solexists-e}
		x^4+y^3+x-2y+e=0,
	\end{equation}
	where $e\in\{0,1\}$. Let $(x,y)$ be any integer solution to \eqref{eq:solexists-e}, and define
	\[
	H=R-y,\qquad \text{where}\qquad R=x^2+x+e-1.
	\]
	Then the algorithm in Section \ref{sec:methodlin} leads to the polynomial
	identity
	\begin{equation*}
		HT-U^2-3V^2=-4(x^4+y^3+x-2y+e),
	\end{equation*}
	where
	\[
	T=4(y^2+Ry+R^2-2),\qquad
	U=2x^3+3x^2+(3e-2)x-2,\qquad
	V=x(x+e),
	\]
	which can be verified by direct expansion, using $e^2=e$.
	
	Hence, any integer solution of \eqref{eq:solexists-e} satisfies
	\begin{equation}\label{eq:solexists-norm}
		HT=U^2+3V^2.
	\end{equation}
	The right-hand side of \eqref{eq:solexists-norm} is strictly positive:
	if it vanished, then $V=x(x+e)=0$, so $x=0$ or $x=-e$, but in either
	case $U=-2$. Thus $H$ and $T$ are nonzero and have the same sign.
	
	First assume that $H>0$ and $T>0$. Modulo $2$, equation
	\eqref{eq:solexists-e} gives
	$
	y\equiv e\pmod2,
	$
	and it follows that
	$
	H=x^2+x+e-1-y
	$
	is odd. Further, modulo $3$, equation \eqref{eq:solexists-e} gives
	$
	x^2+x-y+e\equiv0\pmod3,
	$
	and therefore
	\[
	H=x^2+x+e-1-y\equiv-1\equiv2\pmod3.
	\]
	Hence, the case $d=3$ of Corollary \ref{cor:norm-congruence-cases},
	applied to \eqref{eq:solexists-norm}, implies the existence of an odd prime
	$p\equiv2\pmod3$ that is a common factor of $H,T,U,V$. Now $p\mid V=x(x+e)$ implies that 
	$x\equiv0$ or $x\equiv-e\pmod p$, and in either case we obtain $U\equiv-2\pmod p$, which is a contradiction.
	
	Hence, we must have $H<0$ and $T<0$. The identity
	$$
		T=(2y+R)^2+3R^2-8
	$$
	implies that $T<0$ is possible only if $|R|\le1$. Since
	\[
	R=x(x+1)+e-1
	\]
	and $x(x+1)$ is a nonnegative even integer, for $e=1$ this forces $x(x+1)=0$, while for $e=0$ we have $x(x+1)\in \{0,2\}$. In either case this forces $x \in \{-2,-1,0,1\}$. Substituting these values in the original equations \eqref{eq:solexists-0} and \eqref{eq:solexists-1} and solving for $y$, we obtain the stated integer solutions.
\end{proof}

\section{Cubic reciprocity graphs}\label{sec:graphs}

In Section \ref{sec:smallest} we solved all equations from Table~\ref{tab:2varH34}, except for the five equations listed in Table~\ref{tab:remaining}. In this section, we solve these remaining equations, thus finishing the proof of Theorem \ref{prop:table-one}. 

\begin{table}[!htbp]
	\begin{center}
		\setlength{\tabcolsep}{6pt}
		\begin{tabular}{ |c|c| }
			\hline
			$h$ & Equation \\
			\hline\hline
			$34$ &  $y^3-y^2+y=x^4+x+2$ \\
			\hline
			$34$ &  $y^3+xy-y=x^4+x+2$ \\
			\hline
			$34$ &  $y^3+xy+y=x^4+4$ \\
			\hline
			$34$ &  $y^3+xy-y=x^4+4$ \\
			\hline
			$34$ &  $y^3+xy-y=x^4-4$ \\
			\hline
		\end{tabular}
		\caption{\label{tab:remaining} Equations from Table~\ref{tab:2varH34} not solved in Section \ref{sec:smallest}.}
	\end{center}
\end{table}

Four equations will be solved by using cubic reciprocity (two of them after a preliminary restriction by quadratic reciprocity), while the fifth requires cubic and quadratic reciprocity applied jointly. In all cases, we introduce auxiliary polynomials whose coefficients may be Eisenstein integers, and use cubic reciprocity to obtain incompatible conditions on their values at a hypothetical integer solution. For all equations considered in this section, the insolubility certificates are too large to be verified manually in their entirety. We therefore provide only outlines of the proofs, relying on Lean-verified certificates \cite{certificates} to establish their correctness.

\subsection{Eisenstein integers and cubic reciprocity}\label{sec:cubic-background}

\paragraph{Arithmetic in the Eisenstein integers.}
Let
\[
 \omega=\frac{-1+\sqrt{-3}}2,\qquad
 \mathcal E=\Z[\omega]=\{a+b\omega:a,b\in\Z\}.
\]
Thus $\omega^3=1$, $\omega\ne1$, and $\omega^2=-1-\omega$.
The last identity shows that sums and products of elements of
$\mathcal E$ again belong to $\mathcal E$. Its elements are called
\emph{Eisenstein integers}. The expression $a+b\omega$ is unique.
The bar denotes complex conjugation: for $\alpha=a+b\omega$,
\[
 \overline\alpha=a+b\omega^2=(a-b)-b\omega.
\]
The \emph{norm} of $\alpha$ is
\begin{equation*}
 \Norm(\alpha)=\alpha\overline\alpha=a^2-ab+b^2.
\end{equation*}
It is a positive integer when $\alpha\ne0$ and satisfies
$\Norm(\alpha\beta)=\Norm(\alpha)\Norm(\beta)$.
For an ordinary integer $n$, regarded as an element of $\mathcal E$,
its norm is $n^2$.

For $\alpha,\beta\in\mathcal E$, the notation $\beta\mid\alpha$
means that $\alpha=\beta\gamma$ for some $\gamma\in\mathcal E$;
$\alpha\equiv\gamma\pmod\beta$ means $\beta\mid\alpha-\gamma$.
An element is a \emph{unit} if its multiplicative inverse also belongs
to $\mathcal E$. The units are
$1,-1,\omega,-\omega,\omega^2,-\omega^2$, precisely the elements
of norm $1$. Two nonzero elements are \emph{associates} if one is
a unit times the other. They have the same divisors, up to units, and
define the same congruences. Elements are \emph{coprime} if their only
common divisors are units.

A \emph{prime element} of $\mathcal E$ is a nonzero nonunit $\pi$
such that $\pi\mid\alpha\beta$ implies $\pi\mid\alpha$ or
$\pi\mid\beta$. We use the standard unique-factorization theorem:
every nonzero Eisenstein integer is a unit times a product of prime
elements, uniquely up to their order and replacement by associates
\cite[Chapter~1]{ireland1990classical}. We also use the B\'ezout
property: if $\alpha,\beta$ are coprime, some $r,s\in\mathcal E$
satisfy $r\alpha+s\beta=1$ \cite[Chapter~1]{ireland1990classical}. In particular, $\alpha$ then
has a multiplicative inverse modulo $\beta$.
For $z\ne0$,
\[
 v_\pi(z)=\max\{e\in\Z_{\ge0}:\pi^e\mid z\}
\]
is well defined. It is the exponent of $\pi$ in the factorization of
$z$, and does not depend on which associate represents $\pi$.
We set $v_\pi(0)=\infty$ when discussing congruence calculations,
but residue-symbol expressions below always have nonzero arguments.
The \emph{$\pi$-unit part} of $z$ is
$z/\pi^{v_\pi(z)}$; this is an Eisenstein integer not divisible by
$\pi$. We remark that ``$\pi$-unit'' need not be a unit of $\mathcal E$.

We also use the standard description of the prime elements
\cite[Chapter~9]{ireland1990classical}. Put $\lambda=1-\omega$.
Then
\[
 \Norm(\lambda)=3,\qquad \lambda^2=-3\omega,
 \qquad 3=-\omega^2\lambda^2.
\]
Thus $\lambda$ is prime, and every prime element dividing $3$
is an associate of $\lambda$. For an ordinary positive prime $p\ne3$:
\begin{itemize}[leftmargin=2em,itemsep=2pt]
\item If $p\equiv2\pmod3$, then $p$ itself is a prime element of
      $\mathcal E$, of norm $p^2$.
\item If $p\equiv1\pmod3$, there is a prime element $\pi$ such that
      $p=\pi\overline\pi$ and $\Norm(\pi)=\Norm(\overline\pi)=p$.
      The factors $\pi$ and $\overline\pi$ are conjugate but not associates.
\end{itemize}

This classification is exhaustive: every prime element of $\mathcal E$
is an associate of $\lambda$, of an ordinary prime
$p\equiv2\pmod3$, or of one of the two factors $\pi,\overline\pi$
in a factorization $p=\pi\overline\pi$ with $p\equiv1\pmod3$. For example, $3+\omega$ is a prime, and it occurs in the factorization 
$
7=(3+\omega)(2-\omega).
$
The factors are complex conjugates, since $\overline{3+\omega}=3+\omega^2=2-\omega$.

\paragraph{Cubic residues and the cubic residue symbol.}
Let $\eta\in\mathcal E$ be nonzero, and suppose that $\alpha$ and
$\eta$ are coprime. We call $\alpha$ a \emph{cubic residue modulo
$\eta$} if there is a $z\in\mathcal E$ satisfying
$z^3\equiv\alpha\pmod\eta$. Otherwise it is a cubic nonresidue.
The variable $z$ here ranges over Eisenstein integers, even when
$\alpha$ or $\eta$ is an ordinary integer.

Let $\pi$ be a prime element with $\Norm(\pi)\ne3$, and put
$q=\Norm(\pi)$. The preceding classification gives $q\equiv1\pmod3$:
either $q=p\equiv1\pmod3$, or $q=p^2$ with $p\equiv2\pmod3$.
In particular, $(q-1)/3$ is an integer. A standard form of Euler's
criterion for Eisenstein integers states that, if $\pi\nmid\alpha$,
there is exactly one $j\in\{0,1,2\}$ for which
\[
 \alpha^{(q-1)/3}\equiv\omega^j\pmod\pi;
\]
see \cite[Proposition~9.3.2]{ireland1990classical}.
This existence and uniqueness justify the definition
\begin{equation}\label{eq:cubic-residue-symbol}
 \left(\frac\alpha\pi\right)_3=
 \begin{cases}
  0,&\pi\mid\alpha,\\
  \omega^j,&\pi\nmid\alpha\text{ and }
       \alpha^{(\Norm(\pi)-1)/3}\equiv\omega^j\pmod\pi.
 \end{cases}
\end{equation}
The value is one of the four complex numbers $0,1,\omega,\omega^2$.

We use the following classical properties
\cite[Proposition~9.3.3]{ireland1990classical}:
\begin{align*}
 \pi\nmid\alpha:\quad
 \left(\frac\alpha\pi\right)_3=1
 &\quad\Longleftrightarrow\quad
       \alpha\text{ is a cubic residue modulo }\pi,\\
 \left(\frac{\alpha\gamma}\pi\right)_3
 &=\left(\frac\alpha\pi\right)_3
   \left(\frac\gamma\pi\right)_3,\\
 \alpha\equiv\gamma\pmod\pi
 &\quad\Longrightarrow\quad
   \left(\frac\alpha\pi\right)_3=\left(\frac\gamma\pi\right)_3.
\end{align*}
For example, modulo $\pi=3+\omega$ we have $\omega\equiv-3\equiv4$,
and $2^{(7-1)/3}=4$. Hence $(2/\pi)_3=\omega$, so $2$ is not a
cubic residue modulo this prime element.

For a nonzero $\eta$ coprime to $\lambda$, write
$\eta=u\prod_i\pi_i^{e_i}$, where $u$ is a unit and the $\pi_i$
are pairwise nonassociate primes. Define
\begin{equation}\label{eq:cubic-composite-symbol}
 \left(\frac\alpha\eta\right)_3
   =\prod_i\left(\frac\alpha{\pi_i}\right)_3^{e_i}.
\end{equation}
An empty product is $1$, so a unit denominator gives $1$.
This definition is independent of the factorization choices: associates
have the same norm and give the same congruences in
\eqref{eq:cubic-residue-symbol}. The symbol is multiplicative in both
arguments. Multiplying either argument by a nonzero cube does not
change the symbol when the resulting arguments are still coprime
and the denominator is coprime to $\lambda$.
A cubic residue modulo $\eta$ has symbol $1$;
for composite $\eta$ the converse can fail, because nontrivial factors
in \eqref{eq:cubic-composite-symbol} can cancel.

\paragraph{Reciprocity and the supplementary laws.}
The usual convention calls $A$ \emph{primary} if $A\equiv-1\pmod3$.
For the formulas below it is more convenient to call $A$
\emph{normalized} if $A\equiv1\pmod3$. Every element coprime to
$\lambda$ has exactly one normalized associate: this is the negative
of its unique primary associate
\cite[Corollary~3.8]{bosma2005cubic}.
The change of sign has no effect on the symbols, since
$-1=(-1)^3$ and a unit factor in the denominator is ignored.

If $A=a+b\omega$ is normalized, the congruence $A\equiv1\pmod3$ implies that 
$a\equiv1\pmod3$ and $b\equiv0\pmod3$. Write
$a=1+3r$, $b=3s$, with $r,s\in\Z$. Then
\[
 \Norm(A)=1+3(2r-s)+9(r^2-rs+s^2).
\]
Define
\begin{equation*}
 M(A)=\frac{\Norm(A)-1}{3}\pmod3,\qquad
 L(A)=\frac{a-1}{3}\pmod3.
\end{equation*}
Equivalently,
$M(A)=2r-s\pmod3$ and $L(A)=r\pmod3$.

\begin{theorem}[Cubic reciprocity and supplementary laws]\label{thm:cubic-classical}
For coprime normalized Eisenstein integers $A,B$,
\begin{equation*}
 \left(\frac AB\right)_3=\left(\frac BA\right)_3.
\end{equation*}
For every normalized $A$,
\begin{equation*}
 \left(\frac\omega A\right)_3=\omega^{M(A)},\qquad
 \left(\frac\lambda A\right)_3=\omega^{L(A)}.
\end{equation*}
If $2\nmid A$, there is a unique $j(A)\in\{0,1,2\}$ with
$A\equiv\omega^{j(A)}\pmod2$, and
\begin{equation*}
 \left(\frac2A\right)_3=\left(\frac A2\right)_3
      =\omega^{j(A)}.
\end{equation*}
\end{theorem}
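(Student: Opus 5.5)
The plan is to reduce every assertion to the classical statements in \cite[Chapter~9]{ireland1990classical}, which are phrased for \emph{primary} elements ($\pi\equiv2\pmod3$), and then to extend from primes to composite normalized denominators by multiplicativity.

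\textbf{Change of convention.} If $A$ is normalized, then $-A$ is primary. Since $-1=(-1)^3$ is a cube and unit factors in the denominator are ignored, every symbol in the theorem is unchanged when $A$ or $B$ is replaced by its negative. Write a primary prime as $\pi=a'+b'\omega$ with $a'=3m-1$ and $b'=3n$. Then $A=-\pi$ has $r=-m$ and $s=-n$. Consequently
\[
M(A)\equiv-2m+n\equiv m+n,\qquad L(A)\equiv-m\equiv2m \pmod 3.
\]
These are exactly the exponents in the classical supplementary laws $(\omega/\pi)_3=\omega^{m+n}$ and $(\lambda/\pi)_3=\omega^{2m}$. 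The $\omega$-law can also be read off directly from \eqref{eq:cubic-residue-symbol}: for a prime $\pi$ one has $\omega^{(\Norm(\pi)-1)/3}=\omega^{M}$. I will also invoke the classical law of cubic reciprocity for distinct primary primes.

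\textbf{Extension to composite normalized elements.} Factor a normalized $A$ into normalized primes; each prime factor has a unique normalized associate. A product of normalized elements is normalized, so the leftover unit is $1$ and $A=\prod\pi_i^{e_i}$ exactly.
\begin{itemize}[leftmargin=2em,itemsep=2pt]
\item \emph{Reciprocity.} By bimultiplicativity, $(A/B)_3=\prod_{i,j}(\pi_i/\rho_j)_3^{e_if_j}$, and the prime case gives reciprocity for $A,B$.
\item \emph{Supplementary laws.} It suffices to check that $M$ and $L$ are additive mod $3$ under multiplication of normalized elements. For $M$, write $\Norm=1+3M'$; then $(1+3M_1')(1+3M_2')=1+3(M_1'+M_2')+9M_1'M_2'$. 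For $L$, take $A=(1+3r)+3s\omega$ and $A'=(1+3r')+3s'\omega$. The rational part of $AA'$ is $(1+3r)(1+3r')-9ss'\equiv1+3(r+r')\pmod9$, since $\omega^2=-1-\omega$.
\end{itemize}

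\textbf{The law for $2$.} The element $-2\equiv1\pmod3$ is normalized, so by reciprocity
\[
(2/A)_3=(-2/A)_3=(A/-2)_3=(A/2)_3.
\]
Since $2\equiv2\pmod3$ is a prime element of norm $4$, the exponent $(4-1)/3$ equals $1$. Hence $(A/2)_3$ is simply the unique $\omega^j$ congruent to $A$ modulo $2$. Uniqueness of $j(A)$ holds because $\mathcal E/2\mathcal E\cong\mathbb F_4$, whose nonzero elements are $1,\omega,\omega^2$.

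The main obstacle is the prime case of cubic reciprocity itself, which requires Gauss and Jacobi sum machinery. I will cite it rather than reprove it, so the real work is the careful bookkeeping of the sign convention and the additivity checks above.
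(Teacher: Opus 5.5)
Your proposal is correct and follows essentially the paper's route. The paper likewise cites the prime cases, passes to composite normalized $A$ by multiplicativity using the additivity of $M$ and $L$, and obtains the law for $2$ from reciprocity with the normalized prime $-2$ together with Euler's criterion $(A/2)_3\equiv A\pmod 2$; your sign bookkeeping and additivity checks supply details the paper leaves implicit.

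One caution on sourcing: Ireland--Rosen state cubic reciprocity only for primary primes with $\Norm(\pi_1)\neq\Norm(\pi_2)$. The conjugate pairs $\pi,\overline\pi$ that can arise in your bimultiplicativity step therefore need either a reference covering all coprime primary elements (the paper cites Bosma for this) or a separate proof that $(\overline\pi/\pi)_3=1$. By $(\pi/\overline\pi)_3=\overline{(\overline\pi/\pi)_3}$, that identity is exactly reciprocity for such a pair.
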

These are classical results; see, e.g., 
\cite[Theorems~3.9--3.10 and Lemma~3.11]{bosma2005cubic}, where reciprocity is stated for arbitrary primary elements and the supplementary laws for primes. The supplementary laws for composite $A$ follow by multiplicativity, since $M$ and $L$ are additive on normalized elements; see also \cite[Chapter~9, Section~3 and Exercises~17--20]{ireland1990classical}. The law for $2$ follows from reciprocity applied to the normalized prime $-2$ and from Euler's criterion $(A/2)_3\equiv A\pmod2$; for primes it is in \cite[Chapter~9, Section~6]{ireland1990classical}.

\paragraph{An additive form of the reciprocity law.}
We now record these laws in a form that will let us add the contributions
from many pairs of auxiliary values. Write
$\mathbb F_3=\Z/3\Z$ for arithmetic with the residues $0,1,2$.
Every $z\in\mathcal E\setminus\{0\}$ can be uniquely represented in the form
\begin{equation}\label{eq:cubic-normalization}
 z=\varepsilon\omega^{k(z)}\lambda^{v_\lambda(z)}B(z),
 \quad \varepsilon\in\{1,-1\},\quad k(z)\in\{0,1,2\},
 \quad B(z)\equiv1\pmod3.
\end{equation}
Put $m(z)=M(B(z))$ and $\ell(z)=L(B(z))$.
For a prime $\pi$ with $\Norm(\pi)\ne3$ and $\pi\nmid u$, define
$\chi_\pi(u)\in\mathbb F_3$ by
$(u/\pi)_3=\omega^{\chi_\pi(u)}$.
For nonzero $a,b\in\mathcal E$, set
\begin{equation}\label{eq:cubic-beta}
\begin{split}
 \beta_\pi(a,b)&:=v_\pi(a)\chi_\pi\bigl(b/\pi^{v_\pi(b)}\bigr)
                -v_\pi(b)\chi_\pi\bigl(a/\pi^{v_\pi(a)}\bigr),
                       \qquad \Norm(\pi)\ne3,\\
 \beta_\lambda(a,b)&:=k(a)m(b)-m(a)k(b)
       +v_\lambda(a)\ell(b)-\ell(a)v_\lambda(b).
\end{split}
\end{equation}
The right-hand sides are evaluated modulo $3$. Each $\beta_\pi$
is additive with respect to multiplication of either argument; for
example $\beta_\pi(aa',b)=\beta_\pi(a,b)+\beta_\pi(a',b)$.
It also satisfies $\beta_\pi(b,a)=-\beta_\pi(a,b)$ and
$\beta_\pi(a,a)=0$. For $\beta_\lambda$ these properties hold because $z\mapsto(v_\lambda(z),k(z),m(z),\ell(z))$ is a homomorphism from $\mathcal E\setminus\{0\}$ to $\Z\times\mathbb F_3^3$. For $\Norm(\pi)\ne3$, changing the representative of the prime
to an associate does not change $\beta_\pi$: the changes in the two unit
parts cancel in its defining expression.

Cubic reciprocity and the supplementary laws, expressed in this
notation, give
\begin{equation}\label{eq:cubic-product}
 \beta_\lambda(a,b)
       +\sum_{\Norm(\pi)\ne3}\beta_\pi(a,b)=0
       \qquad\text{in }\mathbb F_3.
\end{equation}
The sum takes one representative of each class of associate prime
elements. Apart from the separately written $\lambda$ term, only
prime divisors of $ab$ can contribute. To interpret the formula,
factor both arguments into prime elements. Reciprocity cancels the
terms involving two normalized primes; the terms involving $\omega$
and $\lambda$ are exactly the four terms defining $\beta_\lambda$.
Thus \eqref{eq:cubic-product} is a way to rewrite the classical
laws above, rather than an additional reciprocity theorem.

\subsection{Graphs and their polynomial checks}\label{sec:cubic-graphs}

Fix a nonconstant polynomial $f\in\Z[x,y]$. We will build a certificate that shows that the equation $f(x,y)=0$ has no integer solutions. As a first part of this certificate, we fix an integer $m\ge1$, nonzero polynomials $H_i\in\mathcal E[x,y]$ and nonzero constants $c_i\in\mathcal E$ for $0\le i<m$. Further, we choose weights $e_{ij}\in\{0,1,2\}$ for $0\le i<j<m$. For each equation, these data will be chosen such that all the numbered assumptions stated below are satisfied. 

It is convenient to represent this as graph data: vertex $i$ carries $H_i$, and an edge joins $i<j$ precisely when
$e_{ij}\ne0$. Unless stated otherwise, vertex indices range from $0$ to $m-1$ and edge indices satisfy $0\le i<j<m$ and $e_{ij}\ne0$. Set $\rho_{ij}=e_{ij}$, $\rho_{ji}=-e_{ij}$ and $\rho_{ii}=0$ in $\mathbb F_3$. Write $\widetilde\rho_{ij}\in\{0,1,2\}$ for
the integer representative of $\rho_{ij}$. Put $d=\deg f$, where
degree means total degree, and fix an integer
$h\ge\max_i\deg H_i$.

Our first assumption is the following one.

\begin{assumption}[Nonzero auxiliary values]\label{ass:cubic-nonzero}
	For every $0\le i<m$, the system
	\begin{equation*}
		f(x,y)=H_i(x,y)=0
	\end{equation*}
	has no solutions in integers $(x,y)$.
\end{assumption}

Now suppose that $(x,y)\in\Z^2$ satisfies $f(x,y)=0$. Then, by Assumption \ref{ass:cubic-nonzero}, every $H_i = H_i(x,y)$ is nonzero. For $\pi=\lambda$ and for each prime element $\pi \in \mathcal E$ with $\Norm(\pi)\ne3$, and each ordinary prime $p$, define
\begin{equation}\label{eq:cubic-graphsum}
 B_\pi:=\sum_{i<j}e_{ij}\beta_\pi(H_i,H_j)
                 +\sum_i\beta_\pi(H_i,c_i),\qquad
 B_p:=\sum_{\pi\mid p}B_\pi.
\end{equation}

Here, in the sum 
$
\sum_{\pi\mid p}B_\pi,
$
we include one prime element from each collection of associates 
dividing $p$, without recording its multiplicity. Explicitly:
\begin{itemize}
	\item For $p=3$, the only index is $\lambda$, so $B_3=B_\lambda$.
	Although $\lambda$ occurs twice in
	$3=-\omega^2\lambda^2$, it contributes only one summand.
	\item For $p\equiv2\pmod3$, the only index is the Eisenstein
	prime element $p$ itself.
	\item For $p\equiv1\pmod3$, choose any $\pi$ with
	$p=\pi\overline\pi$. The two indices are $\pi$ and
	$\overline\pi$, so $B_p=B_\pi+B_{\overline\pi}$.
\end{itemize}
Replacing a prime element by its associate does not change
$B_\pi$, so these choices do not affect $B_p$.

We call $B_p$ the contribution at the prime $p$.
Summing \eqref{eq:cubic-product} for the pairs in
\eqref{eq:cubic-graphsum} yields
\[
\sum_{\substack{p\ge2\\p\text{ prime}}} B_p=0,
\]
where $p$ ranges over the ordinary prime integers $2,3,5,7,\ldots$.
For each fixed integer pair $(x,y)$ with all $H_i(x,y)\ne0$,
this sum has only finitely many nonzero terms: if $p\ne3$
and $p$ divides none of the integers $\Norm(H_i(x,y))$ and $\Norm(c_i)$,
then $B_p=0$.
Our aim is to show that, at an integer solution of the equation,
the possible contributions cannot add to zero. This contradiction will show that the equation has no integer solutions. 

\paragraph{The polynomial data and their assumptions.}
For polynomial calculations we use
\[
 K=\Q(\omega)=\{r+s\omega:r,s\in\Q\}.
\]
Conjugation and norm extend to $K$ by
$\overline{r+s\omega}=r+s\omega^2$ and
$\Norm(r+s\omega)=r^2-rs+s^2$. A nonzero element $\alpha\in K$
has positive rational norm and inverse
$\overline\alpha/\Norm(\alpha)$. The notation $K[t]$ means
polynomials in one variable $t$ with coefficients in $K$.
Every Eisenstein integer has a residue modulo a prime element $\pi$,
but a fraction in $K$ need not. A fraction $\alpha/\beta$, with
$\alpha,\beta\in\mathcal E$, can be reduced by multiplying the residue
of $\alpha$ by the inverse of the residue of $\beta$ only when
$\pi\nmid\beta$ (after cancelling common factors). For example,
$1/2\equiv3\pmod5$, whereas $1/2$ has no residue modulo $2$.
Thus, when reducing polynomial identities with coefficients in $K$,
we must check that the coefficient denominators are invertible.

\begin{assumption}[Parametrizations]\label{ass:cubic-charts}
For each $i$, polynomials $X_i,Y_i,\Delta_i\in K[t]$ are specified,
with $\Delta_i\ne0$, such that
\begin{equation*}
 (x,y)=\sigma_i(t):=
 \left(\frac{X_i(t)}{\Delta_i(t)},
       \frac{Y_i(t)}{\Delta_i(t)}\right),\qquad
 H_i\left(\frac{X_i(t)}{\Delta_i(t)},
          \frac{Y_i(t)}{\Delta_i(t)}\right)=0
\end{equation*}
identically in $t$, and at least one coordinate of $\sigma_i$ is nonconstant.
\end{assumption}

Such a pair of formulas is called a \emph{rational parametrization}, or a \emph{chart}. They are defined where $\Delta_i(t)\ne0$.
An arbitrary polynomial curve need not admit a rational parametrization.
Assumption~\ref{ass:cubic-charts} therefore restricts the choice of polynomials $H_i$. 

Define
\begin{equation}\label{eq:cubic-FG}
 F_i(t)=\Delta_i(t)^d f(\sigma_i(t)),\qquad
 G_{ij}(t)=\Delta_i(t)^h H_j(\sigma_i(t)).
\end{equation}
Here, by $f(\sigma_i(t))$ we mean $f(x,y)$ for $x=X_i(t)/\Delta_i(t)$ and $y=Y_i(t)/\Delta_i(t)$. The choices of $d,h$ clear the denominators involving $t$, so $F_i,G_{ij}\in K[t]$.

\begin{assumption}[Cube identities]\label{ass:cubic-identities}
For every $i$, $F_i\ne0$, and polynomials $W_i,Q_i\in K[t]$
and an integer $n_i\ge0$ are specified for which
\begin{equation}\label{eq:cubic-vertex}
 \Delta_i^{n_i}\left(
  \Delta_i^{h\sum_{j=0}^{m-1}\widetilde\rho_{ij}}W_i^3
  -c_i\prod_{j=0}^{m-1}G_{ij}^{\widetilde\rho_{ij}}
 \right)=Q_iF_i
\end{equation}
as an equality in $K[t]$. Factors with exponent zero are omitted;
an empty product is $1$.
\end{assumption}

The identity is checked by exact polynomial division. At a parameter value $t_0$ modulo a prime
element $\pi$, provided all coefficient denominators are invertible modulo $\pi$ and
$F_i(t_0)=0$, $\Delta_i(t_0)\ne0$, it gives
\[
 W_i(t_0)^3\equiv
 c_i\prod_{j=0}^{m-1}
 H_j(\sigma_i(t_0))^{\widetilde\rho_{ij}}\pmod\pi.
\]
Thus the product on the right is a cube modulo $\pi$.

For nonzero polynomials $A,B$ of degrees $r,s$, their
\emph{resultant} $\Res_t(A,B)$ is the determinant of the Sylvester
matrix, whose rows are the coefficient lists of
$t^{s-1}A,\ldots,A,t^{r-1}B,\ldots,B$ in descending powers of $t$.
 If $r=0$ there are no rows from $B$, and if $s=0$ there are no rows from $A$; for $r=s=0$ the empty determinant is $1$.
The resultant has a B\'ezout identity
$\Res_t(A,B)=UA+VB$ with $U,V\in K[t]$; see
\cite[Chapter~3, Section~6]{cox2015ideals}.

\begin{assumption}[Edge resultants]\label{ass:cubic-resultants}
For every edge $i<j$, $G_{ij}\ne0$ and
\begin{equation}\label{eq:cubic-resultant}
 R_{ij}:=\Res_t(F_i,G_{ij})\ne0.
\end{equation}

\end{assumption}

Assumption \ref{ass:cubic-resultants} implies the existence of polynomials $U_{ij},V_{ij}\in K[t]$ satisfying
\begin{equation}\label{eq:cubic-bezout}
	R_{ij}=U_{ij}F_i+V_{ij}G_{ij} \neq 0.
\end{equation}
These polynomials can be computed by the Euclidean algorithm. 
When $\deg F_i+\deg G_{ij}>0$, the coefficients of $U_{ij},V_{ij}$ can also be
obtained from the adjugate of the Sylvester matrix, that is, the
transpose of its matrix of signed minors. Whenever the denominators of these coefficients are invertible modulo $\pi$, a common
root of $F_i,G_{ij}$ modulo $\pi$ forces $R_{ij}=0$ modulo $\pi$.

\paragraph{Exceptional primes and coverage.}
For $r+s\omega\in K$, its \emph{coefficient denominators} are
the positive denominators of $r,s\in\Q$ in lowest terms.
Coefficientwise reduction modulo $\pi$ multiplies each numerator by the inverse
of its denominator; this is valid when those denominators are
not divisible by $\pi$. For a finite set $S$ of ordinary positive primes, a nonzero integer
is \emph{supported on $S$} if all its positive prime divisors belong to $S$. For a nonzero
rational number this means that both its numerator and denominator
in lowest terms are supported on $S$; for a nonzero element of
$K$ it means that its norm is supported on $S$.

\begin{samepage}
\begin{assumption}[Exceptional primes]\label{ass:cubic-primes}
A specified finite set $S$ of ordinary positive primes satisfies:
\begin{enumerate}[label=\textup{(\roman*)},nosep]
\item $3\in S$;
\item every $\Norm(c_i)$ and every $\Norm(R_{ij})$ is supported on $S$;
\item the coefficient denominators of $X_i,Y_i,\Delta_i,F_i,W_i,Q_i$
for every vertex $i$, of $G_{ij}$ for $0\le i,j<m$, and of
$U_{ij},V_{ij}$ for every edge $i<j$, are all supported on $S$.
\end{enumerate}
\end{assumption}
\end{samepage}

For $p\notin S$ and $\pi\mid p$, these conditions ensure that all
coefficient denominators in the specified identities are invertible modulo $\pi$
and every $c_i,R_{ij}$ is nonzero modulo $\pi$.

\begin{assumption}[Coverage modulo primes]\label{ass:cubic-coverage}
For every ordinary prime $p\notin S$, every prime element $\pi\mid p$,
every $i$, and every pair of residue classes $x_0,y_0$ of
Eisenstein integers modulo $\pi$, the implication
\begin{equation*}
\begin{gathered}
 f(x_0,y_0)\equiv H_i(x_0,y_0)\equiv0\pmod\pi\quad\Longrightarrow\\
 \exists\,t_0\pmod\pi:\quad
 \Delta_i(t_0)\not\equiv0,\qquad
 x_0\equiv X_i(t_0)/\Delta_i(t_0),\quad
 y_0\equiv Y_i(t_0)/\Delta_i(t_0)\pmod\pi
\end{gathered}
\end{equation*}
holds. The witness $t_0$ is also a residue class of Eisenstein integers.
\end{assumption}

Two elementary checks are useful. 
\begin{itemize}
	\item For a line $ax+by+c=0$ with
	$b\ne0$, the chart $x=t$, $y=-(at+c)/b$ covers every point
	when $\pi\nmid b$; if $b=0$ and $a\ne0$, use $x=-c/a$, $y=t$, which covers every point when $\pi\nmid a$.
	\item For a conic
	\[
	H=(y-ax-b)(y-cx-d)-q,\qquad (c-a)q\ne0,
	\]
	put $t=y-ax-b$. Then
	\[
	t\bigl(t+(a-c)x+b-d\bigr)=q,
	\quad
	x=\frac{t^2+(b-d)t-q}{(c-a)t},\quad y=t+ax+b.
	\]
	If $\pi\nmid(c-a)q$, the equation forces $t\ne0$, so these
	formulas cover every zero of $H$ modulo $\pi$. Including the
	coefficient denominators and prime divisors of
	$\Norm((c-a)q)$ in $S$ therefore verifies
	Assumption~\ref{ass:cubic-coverage} for this chart.
\end{itemize}

\paragraph{The finite obstruction.}

The final piece of our certificate is a collection of sets $C_p\subseteq\mathbb F_3$ for each $p\in S$, which satisfies the following two assumptions.

\begin{assumption}[Local bounds]\label{ass:cubic-local}
For every $p\in S$ and every $(x,y)\in\Z^2$ with $f(x,y)=0$,
\begin{equation*}
 B_p(x,y)\in C_p,
\end{equation*}
where $B_p(x,y)$ is the expression in \eqref{eq:cubic-graphsum}
evaluated at that pair.
\end{assumption}

\begin{assumption}[Incompatibility]\label{ass:cubic-exclusion}
With addition in $\mathbb F_3$,
\begin{equation*}
 0\notin\sum_{p\in S}C_p,\quad \text{where} \quad
 \sum_{p\in S}C_p:=
 \left\{\sum_{p\in S}b_p:
                b_p\in C_p\text{ for every }p\in S\right\}.
\end{equation*}
\end{assumption}

The certificate is now complete, and we are ready to formulate the main result on which the method is based.

\begin{theorem}\label{prop:cubic-certificate}
Let $f\in\Z[x,y]$ be nonconstant, $m\ge1$, and let
$H_0,\ldots,H_{m-1}\in\mathcal E[x,y]$ and
$c_0,\ldots,c_{m-1}\in\mathcal E$ be nonzero. Choose
$e_{ij}\in\{0,1,2\}$ for $0\le i<j<m$, put
$\rho_{ij}=e_{ij}$, $\rho_{ji}=-e_{ij}$, $\rho_{ii}=0$ in
$\mathbb F_3$, and let $\widetilde\rho_{ij}\in\{0,1,2\}$ represent
$\rho_{ij}$. Set $d=\deg f$ and choose an integer
$h\ge\max_i\deg H_i$.

Suppose that polynomials $X_i,Y_i,\Delta_i,W_i,Q_i\in K[t]$
and integers $n_i\ge0$ are specified for $0\le i<m$, together with
$U_{ij},V_{ij}\in K[t]$ satisfying \eqref{eq:cubic-bezout} for every $0\le i<j<m$ with $e_{ij}\ne0$,
and a finite set $S$ of ordinary positive primes. Suppose these data satisfy
Assumptions~\ref{ass:cubic-nonzero}--\ref{ass:cubic-coverage},
with $F_i,G_{ij}$ defined by \eqref{eq:cubic-FG} and
$R_{ij}$ by \eqref{eq:cubic-resultant}. Then every integer solution
of $f=0$ has
\[
 B_p(x,y)=0\qquad\text{for every ordinary prime }p\notin S,
\]
where $B_p$ is defined by \eqref{eq:cubic-graphsum}.
If sets $C_p\subseteq\mathbb F_3$, one for each $p\in S$,
also satisfy Assumptions~\ref{ass:cubic-local}
and~\ref{ass:cubic-exclusion}, then $f=0$ has no integer solutions.
\end{theorem}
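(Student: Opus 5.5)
The plan is to prove the two assertions of Theorem~\ref{prop:cubic-certificate} in order. The first, that $B_p=0$ for $p\notin S$, is a local computation at a single prime element $\pi\mid p$; the second then follows formally from \eqref{eq:cubic-product}.

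\emph{Setup.} Fix an integer solution $(x,y)$ of $f=0$. By Assumption~\ref{ass:cubic-nonzero}, every value $H_i=H_i(x,y)$ is nonzero. Fix $p\notin S$ and a prime element $\pi\mid p$. Since $3\in S$, we have $\Norm(\pi)\ne3$, so $\beta_\pi$ is given by the first line of \eqref{eq:cubic-beta}. By Assumption~\ref{ass:cubic-primes}(ii), $\pi\nmid c_i$ for every $i$, hence $\beta_\pi(H_i,c_i)=v_\pi(H_i)\chi_\pi(c_i)$. Write $v_i=v_\pi(H_i)$ and $u_i=H_i/\pi^{v_i}$. Each edge term can be rewritten as
\[
 e_{ij}\beta_\pi(H_i,H_j)=v_i\rho_{ij}\chi_\pi(u_j)+v_j\rho_{ji}\chi_\pi(u_i).
\]
Regrouping by vertex then gives
\[
 B_\pi=\sum_i v_i\Bigl(\chi_\pi(c_i)+\sum_{j}\rho_{ij}\chi_\pi(u_j)\Bigr).
\]
It therefore suffices to prove two facts for every $i$ with $v_i>0$. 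First, $\pi\nmid H_j$ for every $j$ with $\rho_{ij}\ne0$, so that $u_j=H_j$. Second, the bracket vanishes in $\mathbb F_3$. Summing over $\pi\mid p$ then gives $B_p=0$.

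\emph{The vertex argument.} Suppose $\pi\mid H_i$. The residues $x_0,y_0$ of the integers $x,y$ modulo $\pi$ satisfy $f\equiv H_i\equiv0$. Assumption~\ref{ass:cubic-coverage} supplies a residue $t_0$ with $\Delta_i(t_0)\not\equiv0$ and $\sigma_i(t_0)\equiv(x_0,y_0)$. All coefficient denominators are invertible modulo $\pi$ by Assumption~\ref{ass:cubic-primes}(iii). Hence reducing \eqref{eq:cubic-FG} gives
\[
 F_i(t_0)\equiv\Delta_i(t_0)^d f(x,y)\equiv0,\qquad G_{ij}(t_0)\equiv\Delta_i(t_0)^hH_j(x,y)\pmod\pi.
\]
\begin{itemize}
\item For an edge $\{i,j\}$, suppose also $\pi\mid H_j$. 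Then $t_0$ is a common root of $F_i$ and $G_{ij}$ modulo $\pi$, and the B\'ezout identity \eqref{eq:cubic-bezout}, with invertible denominators, forces $\pi\mid R_{ij}$. This contradicts Assumption~\ref{ass:cubic-primes}(ii). This proves the first fact.
\item Reduce \eqref{eq:cubic-vertex} at $t_0$. The right side vanishes because $F_i(t_0)\equiv0$. Dividing by the invertible power of $\Delta_i(t_0)$ and substituting $G_{ij}(t_0)\equiv\Delta_i(t_0)^hH_j$ gives
\[
 W_i(t_0)^3\equiv c_i\prod_j H_j^{\widetilde\rho_{ij}}\pmod\pi.
\]
The right side is nonzero modulo $\pi$ by the first fact and $\pi\nmid c_i$. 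So it is a nonzero cube modulo $\pi$, and its cubic symbol equals $1$. By multiplicativity and congruence invariance of the symbol, $\chi_\pi(c_i)+\sum_j\rho_{ij}\chi_\pi(H_j)=0$. This proves the second fact.
\end{itemize}

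\emph{Conclusion.} By the paragraph after \eqref{eq:cubic-graphsum}, summing \eqref{eq:cubic-product} over the edge pairs and the pairs $(H_i,c_i)$ gives $\sum_p B_p=0$ in $\mathbb F_3$, a finite sum. Since $B_p=0$ for $p\notin S$, we get $\sum_{p\in S}B_p=0$. Assumption~\ref{ass:cubic-local} gives $B_p\in C_p$ for each $p\in S$, so $0\in\sum_{p\in S}C_p$. This contradicts Assumption~\ref{ass:cubic-exclusion}, so $f=0$ has no integer solutions.

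\emph{Main obstacle.} The delicate step is the reduction of the $K[t]$-identities modulo $\pi$. One must check that every polynomial appearing (namely $X_i,Y_i,\Delta_i,F_i,W_i,Q_i,G_{ij},U_{ij},V_{ij}$) has $\pi$-integral coefficients, which is exactly what Assumption~\ref{ass:cubic-primes}(iii) is designed to provide. One must also check that $\Delta_i(t_0)$ is a unit so that the division is legitimate. Finally, because the regrouping pairs $v_i$ with $\chi_\pi(u_j)$, the argument genuinely needs $\pi\nmid H_j$ along edges before the cube identity yields the vanishing of the bracket.
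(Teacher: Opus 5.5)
Your proof is correct and follows essentially the same route as the paper. Both rule out $\pi$ dividing both ends of an edge via the resultant B\'ezout identity, then evaluate the cube identity at a vertex with $\pi\mid H_i$, then sum the reciprocity law; your explicit regrouping of $B_\pi$ by vertex is just cleaner bookkeeping for the paper's count of ``terms involving this vertex''. One small precision: since $R_{ij}$, $U_{ij}$, $V_{ij}$ are only provided for $i<j$, the edge step should use the chart of the smaller endpoint, which is available because both endpoints are divisible by $\pi$.
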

\begin{proof}
Suppose $f(x,y)=0$ with $(x,y)\in\Z^2$. By
Assumption~\ref{ass:cubic-nonzero}, every $H_i=H_i(x,y)$ is nonzero.
Fix $p\notin S$ and $\pi\mid p$. All reductions used here are
valid by Assumption~\ref{ass:cubic-primes}.

If $\pi$ divided both ends $H_i,H_j$ of an edge $i<j$,
Assumption~\ref{ass:cubic-coverage} would supply $t_0$ representing
$(x,y)$ modulo $\pi$, with $\Delta_i(t_0)\ne0$.
Equation~\eqref{eq:cubic-FG} would give
$F_i(t_0)=G_{ij}(t_0)=0$ modulo $\pi$. The B\'ezout identity \eqref{eq:cubic-bezout} would imply
$R_{ij}=0$ modulo $\pi$, contradicting
Assumption~\ref{ass:cubic-primes}. Thus adjacent values $H_i,H_j$ cannot
both be divisible by $\pi$.

If $\pi\mid H_i$, every neighbouring value is therefore nonzero modulo $\pi$, and so is $c_i$ by Assumption~\ref{ass:cubic-primes}. Apply Assumption~\ref{ass:cubic-coverage}
at vertex $i$ and evaluate the identity in
Assumption~\ref{ass:cubic-identities}. It makes
$c_i\prod_jH_j^{\widetilde\rho_{ij}}$ a nonzero cube modulo $\pi$.
The cubic-residue criterion and \eqref{eq:cubic-beta} show that
all terms of $B_\pi$ involving this vertex have sum
\[
 v_\pi(H_i)\,
 \chi_\pi\left(c_i\prod_jH_j^{\widetilde\rho_{ij}}\right)=0.
\]
No edge is counted twice, since its two ends cannot both be
divisible by $\pi$. In every remaining term both arguments are
indivisible by $\pi$, so that term is zero by
\eqref{eq:cubic-beta}. Hence $B_\pi=0$ and $B_p=0$.

If Assumption~\ref{ass:cubic-local} holds, reciprocity gives
\[
 0=\sum_{p\in S}B_p(x,y)\in\sum_{p\in S}C_p,
\]
contradicting Assumption~\ref{ass:cubic-exclusion}.
\end{proof}

\subsection{Finite computations and reproducibility}\label{sec:cubic-local}

To apply Theorem \ref{prop:cubic-certificate}, we need to determine sets $C_p$ for the finitely many $p\in S$, satisfying Assumptions~\ref{ass:cubic-local} and~\ref{ass:cubic-exclusion}. We will do this by partitioning all possible integer solutions into congruence classes, and allowing every residue-symbol value consistent with each class. A class is called \emph{terminal} when its possible graph contributions have been determined and no further refinement is needed.

\paragraph{Refining the congruence classes.}
Fix a prime $p\in S$. Start with all pairs $(a,b)$ modulo $p$ satisfying $f(a,b)\equiv0$.
For $k\ge1$, a \emph{lift} of $(a,b)$ modulo $p^k$ to modulus
$p^{k+1}$ has the form
$(a+p^ks,b+p^kt)$, where $0\le s,t<p$.
Taylor expansion shows that it satisfies $f\equiv0\pmod{p^{k+1}}$
exactly when
\begin{equation}\label{eq:cubic-lifting}
 f_x(a,b)s+f_y(a,b)t\equiv-f(a,b)/p^k\pmod p.
\end{equation}
Here $f_x,f_y$ are the formal partial derivatives. The quotient
$f(a,b)/p^k$ is an integer because the starting class satisfies
$f(a,b)\equiv0\pmod{p^k}$. This rule remains valid when one or
both derivatives vanish modulo $p$; such classes are not discarded.

\paragraph{The information needed when $p\ne3$.}
For each $\pi\mid p$, formula \eqref{eq:cubic-beta} needs only
$v_\pi(H_i)\pmod3$ and the character of the $\pi$-unit part of
$H_i$. If $H_i(a,b)$ has exact valuation $e<k$, its valuation
is fixed throughout the class, and its unit part is known modulo
$\pi^{k-e}$, hence modulo $\pi$. If instead
$H_i(a,b)\equiv0\pmod{\pi^k}$, its exact valuation is not yet
known. One may safely allow all three valuation residues and all
three unit characters, or refine the class further.

When $p\equiv1\pmod3$, both prime factors of $p$ must be included.
Conjugation satisfies
\[
 \left(\frac{\overline u}{\overline\pi}\right)_3
     =\overline{\left(\frac u\pi\right)_3},
 \qquad
 \chi_{\overline\pi}(\overline u)=-\chi_\pi(u)
\]
\cite[Lemma~3.5]{bosma2005cubic}.
For an ordinary integer value $H_i$, the two valuations are equal
and the two unit characters are opposite. When $p\equiv2\pmod3$,
the $\pi$-unit part of an ordinary integer value has character $0$. These restrictions must
be retained when they are used in a terminal calculation.

\paragraph{The information needed at $3$.}
We calculate with pairs of ordinary integers. In particular,
$\lambda\mid a+b\omega$ exactly when $a+b\equiv0\pmod3$,
and in this case
\begin{equation*}
 \frac{a+b\omega}{\lambda}
       =\frac{2a-b}{3}+\frac{a+b}{3}\omega.
\end{equation*}
Both coefficients are integers: divisibility of $a+b$ by $3$
also implies divisibility of $2a-b=3a-(a+b)$ by $3$.
Repeated division gives $v_\lambda(z)$ and the unit part needed
in \eqref{eq:cubic-normalization}. The four entries
$(v_\lambda(z),k(z),m(z),\ell(z))$, reduced modulo $3$, determine
every occurrence of $z$ in $\beta_\lambda$.

Suppose a value $z$ is known modulo $3^k$, and its representative
has exact $\lambda$-valuation $v<2k$. Since $3$ is a unit times
$\lambda^2$, the $\lambda$-unit part of $z$ is determined modulo
$\lambda^{2k-v}$. If $2k-v\ge4$, it is determined modulo $9$,
which is enough to compute its normalized associate and
$k(z),m(z),\ell(z)$. At lower precision, enumerate the $54$ pairs
$a+b\omega$ modulo $9$ with $a+b\not\equiv0\pmod3$; keep those
consistent with the known unit part and normalize each by multiplying
by a unit. If $z$ is entirely zero at the current precision, allow
all $3^4=81$ quadruples, unless additional restrictions are available.

For a nonzero ordinary integer $z$, the useful restrictions are
\begin{equation}\label{eq:cubic-rational}
 k(z)=v_\lambda(z),\qquad m(z)=-\ell(z)
       \qquad\text{in }\mathbb F_3.
\end{equation}
Indeed, extracting powers of $3=-\omega^2\lambda^2$ gives the first
relation. The remaining normalized factor is an ordinary integer
$1+3r$, for which $M=2r=-r=-L\pmod3$, giving the second.

For each class, substitute every allowed collection of coordinates
in \eqref{eq:cubic-graphsum}. An unknown coordinate belonging to a
vertex must have the same value in all its occurrences. Coordinates
of different vertices may be allowed independently: this can enlarge
$C_p$, but cannot exclude the contribution of an actual solution.
Relations such as conjugation must be retained whenever a sharper
set is claimed. If the resulting set is not sufficient for the
contradiction, refine the class by \eqref{eq:cubic-lifting}.
The terminal classes must cover all initial classes and all their
possible lifts. A computation at just one representative does not
establish this covering property.

\paragraph{Recovering cube roots omitted from the graph tables.}
Given $H_i,c_i,e_{ij}$ and the charts, polynomials $W_i$ and $Q_i$ in \eqref{eq:cubic-vertex} need not be explicitly listed, because they can be reconstructed by finite polynomial arithmetic. Remove from $F_i$ every irreducible factor that divides $\Delta_i$,
with its full multiplicity, and call the result $\widehat F_i$.
If $\widehat F_i$ is constant, take $W_i=0$ and choose $n_i$
so that $F_i$ divides $\Delta_i^{n_i}$; then compute $Q_i$ by
\eqref{eq:cubic-vertex}. Otherwise proceed as follows.
Because $\Delta_i$ and $\widehat F_i$ are coprime, the Euclidean
algorithm gives an inverse of $\Delta_i$ modulo $\widehat F_i$
\cite[Chapter~1, Section~5]{cox2015ideals}.
Thus the expression
\begin{equation}\label{eq:cubic-recover}
 A_i\equiv c_i\prod_j(G_{ij}/\Delta_i^h)^{\widetilde\rho_{ij}}
                          \pmod{\widehat F_i}
\end{equation}
is calculated using polynomial remainders in $K[t]$.

Factor $\widehat F_i$ over $K$. Here an irreducible polynomial is
a nonconstant polynomial that cannot be written as a product of
two polynomials of positive degree with coefficients in $K$.
For each irreducible factor $\phi$,
do arithmetic with polynomials of degree below $\deg\phi$, reducing
products modulo $\phi$. Every nonzero remainder has an inverse by
the Euclidean algorithm. Factor $Z^3-A_i$ in this arithmetic and
choose a linear factor, hence a root $W$ modulo $\phi$.
If $\phi$ occurs more than once, lift a root modulo $\phi^r$ to one modulo $\phi^{r+1}$ by
solving
\[
 3W^2s\equiv(A_i-W^3)/\phi^r\pmod\phi,
 \qquad W\longmapsto W+\phi^rs.
\]
The division on the right is exact because $W$ is already a root
modulo $\phi^r$. Its coefficient $3W^2$ is invertible modulo
$\phi$ provided $\phi$ divides no $G_{ij}$ with $\widetilde\rho_{ij}\ne0$, so that $A_i\not\equiv0\pmod\phi$; for $j>i$ this follows from $R_{ij}\ne0$, and for $j<i$ it is checked directly, for example by $\Res_t(F_i,G_{ij})\ne0$.
Combine the roots for the different factors by the polynomial
Chinese remainder theorem (the usual Chinese remainder construction,
with coprime polynomials in place of coprime integers).
The required simultaneous remainder is computed by repeated
applications of the same Euclidean algorithm.

Take the resulting $W_i$ of degree below $\deg\widehat F_i$ and
verify \eqref{eq:cubic-vertex}, increasing $n_i$ if needed to clear
the removed factors. Include all primes in its coefficient
denominators in $S$, or choose different cube roots if the
certificate specifies a fixed $S$. The possible root choices are
finite. This reconstructs polynomial identities. No curve point is lost by removing the factors
of $\Delta_i$: \eqref{eq:cubic-vertex} is only evaluated where $\Delta_i(t_0)\not\equiv0$, and points not reached by a chart are handled when Assumption~\ref{ass:cubic-coverage} is verified.

\subsection{The equation \texorpdfstring{$y^3-y^2+y=x^4+x+2$}{y\textasciicircum 3-y\textasciicircum 2+y=x\textasciicircum 4+x+2}}\label{sec:detailed}

\begin{proposition}\label{prop:detailed}
The equation
\begin{equation}\label{eq:detailed}
 y^3-y^2+y=x^4+x+2
\end{equation}
has no integer solutions.
\end{proposition}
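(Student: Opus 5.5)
We will apply Theorem~\ref{prop:cubic-certificate} to $f(x,y)=y^3-y^2+y-x^4-x-2$. This equation is one of the five that resisted the quadratic identities of Section~\ref{sec:method}, so the certificate must come from cubic residue symbols. The guiding observation is that the left-hand side factors over $\mathcal E$: since $y^2-y+1=(y+\omega)(y+\omega^2)$, we have
\[
 y^3-y^2+y = y(y+\omega)(y+\overline\omega),
\]
and $x^4+x+2$ should be treated similarly. This suggests choosing vertex polynomials $H_i\in\mathcal E[x,y]$ that are linear in $y$ (or conics of the shape $(y-ax-b)(y-cx-d)-q$). Such choices admit the explicit charts described after Assumption~\ref{ass:cubic-coverage}, so Assumptions~\ref{ass:cubic-charts} and~\ref{ass:cubic-coverage} hold automatically outside finitely many primes.

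\textbf{Step 1: choose the graph.} For each candidate $H_i$, parametrize its zero set by $\sigma_i(t)$ and compute $F_i(t)=\Delta_i^d f(\sigma_i(t))$. For each candidate edge, compute $G_{ij}$ and the resultant $R_{ij}$. For a vertex to be admissible, some product $c_i\prod_j G_{ij}^{\widetilde\rho_{ij}}$ must be a cube modulo $\widehat F_i$. We will test this, for small units and small $\lambda$-powers $c_i$ and all exponent vectors, by the cube-root recovery procedure of Section~\ref{sec:cubic-local}: factor $\widehat F_i$ over $K$, factor $Z^3-A_i$ modulo each irreducible factor, then combine the roots by the polynomial Chinese remainder theorem. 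Natural first candidates are $H=y$, $H=y+\omega$, $H=y+\overline\omega$, and lines $y-ax-b$ obtained from polynomial division as in Section~\ref{sec:methodlin}. The division remainder is then a quartic in $x$, and we want it to be a cube times a unit modulo its own factors. Conjugate pairs should be included together, so that the character relation $\chi_{\overline\pi}(\overline u)=-\chi_\pi(u)$ can be exploited.

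\textbf{Step 2: verify the global assumptions.} With the data fixed, Assumption~\ref{ass:cubic-nonzero} is checked by substituting each chart into $f$: a nonzero $F_i$ with no integer root in the relevant coordinate suffices. Assumptions~\ref{ass:cubic-identities} and~\ref{ass:cubic-resultants} are checked by exact polynomial division and by computing the resultants. We take $S$ to consist of $3$, the primes dividing $\Norm(c_i)$ and $\Norm(R_{ij})$, and all coefficient denominators; Assumption~\ref{ass:cubic-primes} then holds. Theorem~\ref{prop:cubic-certificate} gives $B_p=0$ for every $p\notin S$.

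\textbf{Step 3: local bounds.} For each $p\in S$ we compute $C_p$ by enumerating the solutions of $f\equiv0$ modulo $p$ and refining them by the lifting rule~\eqref{eq:cubic-lifting} until every class is terminal. In doing so we use the restrictions~\eqref{eq:cubic-rational} at $3$ and the conjugation relations for $p\equiv1\pmod 3$. Since $B_3=B_\lambda$ involves $k,m,\ell$, we expect to lift to modulus $9$ or $27$ there. Finally we check that $0\notin\sum_{p\in S}C_p$.

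\textbf{Main obstacle.} The hard part is Step~1: finding a graph whose local contributions actually miss $0$. Most single-vertex choices give $C_p=\mathbb F_3$ at some $p\in S$, which kills the argument. What we would try first is to fit the data so that everything except the $\lambda$-contribution vanishes, aiming for $C_p=\{0\}$ for all $p\neq 3$ and $C_3\subseteq\{1,2\}$. If this fails, we would add a preliminary quadratic reciprocity restriction, as in Proposition~\ref{prop:h32-xyplus4}, to cut down the residue classes modulo $4$ and $9$ before computing $C_3$. Because the resulting certificate will be large, we would rely on the Lean-verified data~\cite{certificates} for the final checks.
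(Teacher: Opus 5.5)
\textbf{The proof itself is missing.} Your proposal sets up the right framework, Theorem~\ref{prop:cubic-certificate}, but it stops exactly where the proof begins. That theorem proves nothing until concrete data are exhibited and all eight assumptions are verified for them. The existence of a graph with $0\notin\sum_{p\in S}C_p$ is the entire content of the proposition, and your Step~1, with its fallback ``if this fails'', leaves it open.

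The paper's proof \emph{is} such a certificate. It consists of:
\begin{itemize}
\item twenty vertices: lines such as $y\pm(1+\omega)$, $x$, $x+1$, $x\pm\omega$, $x\pm(1+\omega)$, $x+y$, $y-1$, plus three conics $H_{17},H_{18},H_{19}$ written as $tL+1$ and parametrized with denominators $\omega t$, $-t$, $(2+\omega)t$;
\item constants $c_i=\omega^{\kappa_i}2^{r_i}\lambda^{s_i}$;
\item $58$ weighted edges;
\item explicit cube roots $W_i$.
\end{itemize}
These are chosen so that all edge-resultant norms are supported on $\{2,3\}$. This permits $S=\{2,3\}$ and reduces Assumption~\ref{ass:cubic-local} to two finite covers: four classes at $2$ (up to modulus $8$), each with $B_2=2$, and nine classes modulo $27$, each with $B_3=0$. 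So the obstruction lives at $2$, not at $\lambda$ as your target $C_3\subseteq\{1,2\}$ presupposed.

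\textbf{Step~1 is also flawed as a search procedure.}
\begin{itemize}
\item The cube condition at vertex $i$ concerns $c_i\prod_jG_{ij}^{\widetilde\rho_{ij}}$, that is, the restrictions of the \emph{neighbouring} vertices to the curve $H_i=0$, taken modulo $\widehat F_i$. Asking the division remainder $f(x,ax+b)$ itself to be ``a cube times a unit modulo its own factors'' is vacuous, since it vanishes modulo them.
\item Exponent vectors cannot be chosen vertex by vertex. Since $\rho_{ji}=-\rho_{ij}$, each edge weight enters the cube conditions at both endpoints. The admissible data therefore form the solution set of a coupled $\mathbb F_3$-linear system in the cube classes of the $G_{ij}$ and of the allowed constants.
\item That system must also be restricted to edges whose resultant norms stay supported on $S$, because every prime added to $S$ needs its own local cover. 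Only inside that restricted solution set can one look for a choice whose local sets miss $0$.
\item Your hint that $x^4+x+2$ ``should be treated similarly'' to $y^3-y^2+y=y(y+\omega)(y+\overline\omega)$ leads nowhere. The quartic $x^4+x+2$ is irreducible over $\Q(\omega)$: its Galois group over $\Q$ is $S_4$ (discriminant $2021$, irreducible resolvent $z^3-8z-1$), whose only quadratic subfield is $\Q(\sqrt{2021})$.
\end{itemize}
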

\begin{proof}[Proof outline]
Suppose that $(x,y)$ is an integer solution, and put
$f=y^3-y^2+y-x^4-x-2$. We will construct a graph for which every
prime other than $2,3$ contributes $0$, while $B_2=2$ and $B_3=0$.
Their sum cannot be $0$ in $\mathbb F_3$, contrary to reciprocity.

Use the twenty auxiliary polynomials in
Table~\ref{tab:detailed-vertices}. Its triple $(\kappa_i,r_i,s_i)$
specifies the constant $c_i=\omega^{\kappa_i}2^{r_i}\lambda^{s_i}$.
Table~\ref{tab:detailed-edges} specifies the $58$ edges and their weights.
Set $m=20$, $d=4$, $h=2$ and $S=\{2,3\}$; all unlisted
weights are zero. We verify Assumptions~\ref{ass:cubic-nonzero}--\ref{ass:cubic-exclusion}
for these data.

\begingroup
\small
\setlength{\tabcolsep}{7pt}
\begin{longtable}{@{}rll@{}}
\caption{Vertices for \eqref{eq:detailed}; $c_i=\omega^{\kappa_i}2^{r_i}\lambda^{s_i}$.}\label{tab:detailed-vertices}\\
\toprule
$i$ & $H_i(x,y)$ & $(\kappa_i,r_i,s_i)$\\
\midrule
\endfirsthead
\toprule
$i$ & $H_i(x,y)$ & $(\kappa_i,r_i,s_i)$\\
\midrule
\endhead
\bottomrule
\endfoot
0 & $y+1+\omega$ & $(0,2,0)$\\
1 & $y-1-\omega$ & $(2,2,0)$\\
2 & $x$ & $(1,2,0)$\\
3 & $x+1+\omega$ & $(1,1,0)$\\
4 & $x+1$ & $(0,2,0)$\\
5 & $x+\omega$ & $(1,1,0)$\\
6 & $x-\omega$ & $(0,1,0)$\\
7 & $x-1-\omega$ & $(0,0,0)$\\
8 & $x+y+\omega$ & $(0,2,0)$\\
9 & $x+(y-2)\omega$ & $(1,1,1)$\\
10 & $-x+y-2-(x^2+x+1)\omega$ & $(0,1,0)$\\
11 & $y-1$ & $(1,0,0)$\\
12 & $x-y+1$ & $(1,2,1)$\\
13 & $x+y$ & $(1,0,0)$\\
14 & $y-2-\omega$ & $(1,1,0)$\\
15 & $y-1-x\omega$ & $(1,1,2)$\\
16 & $-x+y+(1-x)\omega$ & $(0,1,2)$\\
17 & $y^2-y+1-xy\omega$ & $(0,0,0)$\\
18 & $xy+x+y^2+(x+y-1)\omega$ & $(2,0,0)$\\
19 & $-x^2+y^2+1-(x^2+xy)\omega$ & $(2,0,0)$\\
\end{longtable}
\endgroup

\begingroup
\small
\setlength{\tabcolsep}{7pt}
\begin{longtable}{@{}rll@{}}
\caption{All nonzero upper-triangular weights. Only indices $j>i$ are listed.}\label{tab:detailed-edges}\\
\toprule
$i$ & $j$ with $e_{ij}=1$ & $j$ with $e_{ij}=2$\\
\midrule
\endfirsthead
\toprule
$i$ & $j$ with $e_{ij}=1$ & $j$ with $e_{ij}=2$\\
\midrule
\endhead
\bottomrule
\endfoot
0 & $8,13,16$ & $17$\\
1 & $4,6,12$ & $2,5,10,13,15,17$\\
2 & --- & $9,14,18,19$\\
3 & $11,12,16,18$ & $9,10$\\
4 & $12$ & $10,14,19$\\
5 & --- & $8,11,12,18,19$\\
6 & $10$ & $8,9,18,19$\\
7 & $11,17$ & ---\\
8 & $9$ & $16,17$\\
9 & --- & $12,14,15$\\
10 & $11$ & $13,17$\\
11 & --- & $16$\\
12 & --- & $15$\\
13 & $15,18,19$ & ---\\
14 & $15$ & ---\\
15 & $17$ & $18$\\
16 & $19$ & ---\\
17 & --- & ---\\
18 & --- & $19$\\
19 & --- & ---\\
\end{longtable}
\endgroup

\emph{Assumption~\ref{ass:cubic-nonzero}: nonzero auxiliary values.} 
Assume that $(x,y)\in {\mathbb Z}^2$ is an integer solution to $f(x,y)=0$. We need to prove that $H_i(x,y)\neq 0$ for all $i$. An Eisenstein integer $a+b\omega$ is zero exactly when both integer coefficients are zero.
For $i=0,1,3,5,6,7,8,14$ the coefficient
of $\omega$ is a nonzero constant. For $i=10$ it is
$-(x^2+x+1)\ne0$. The zeros of $H_9,H_{15},H_{16}$ would be
$(0,2),(0,1),(1,1)$, where $f$ is respectively $4,-1,-3$.
For the rational-valued vertices substitution gives
\[
\begin{array}{c|l}
	i&\text{necessary polynomial equation in an integer }t\\\hline
	2,4&t^3-t^2+t-2=0\\
	11&-t^4-t-1=0\\
	12&-t^4+5t^3-7t^2+4t-2=0\\
	13&-t^4-t^3-t^2-2t-2=0.
\end{array}
\]
An integer root of a polynomial with integer coefficients and
nonzero constant term divides that constant term. Thus only $\pm1$
need be tested in row $11$, and only $\pm1,\pm2$ in the other rows;
substitution excludes all of them. Finally,
$y^2-y+1>0$ excludes $H_{17}=0$; $H_{18}=0$ would give $x+y=1$
and first coefficient $1$; $H_{19}=0$ would give $x(x+y)=0$
and first coefficient $y^2+1$ or $1$. Thus every $H_i$ is nonzero.

\emph{Assumption~\ref{ass:cubic-charts}: parametrizations.} For $i=0,1,8,10,11,13,14,15,16$,
put $x=t$ and solve $H_i=0$ for $y$. For the other $i<17$, put
$y=t$ and solve for $x$. These charts have no denominators.
The three remaining vertices factor as
\[
 H_{17}=y(y-\omega x-1)+1,\quad
 H_{18}=(y+1+\omega)(y+x-1)+1,\quad
 H_{19}=(y+x)(y-(1+\omega)x)+1.
\]
Putting the first factor equal to $t$ gives
\begin{equation}\label{eq:detailed-conics}
\begin{split}
 \sigma_{17}(t)&=((t^2-t+1)/(\omega t),\ t),\\
 \sigma_{18}(t)&=(-t+2+\omega-1/t,\ t-1-\omega),\\
 \sigma_{19}(t)&=((t^2+1)/((2+\omega)t),\
                    t-(t^2+1)/((2+\omega)t)).
\end{split}
\end{equation}
To clear the variable denominators, take
$\Delta_{17}=\omega t$, $\Delta_{18}=-t$ and
$\Delta_{19}=(2+\omega)t$; set $\Delta_i=1$ for the other vertices.
In each case $X_i=\Delta_i(x\circ\sigma_i)$ and
$Y_i=\Delta_i(y\circ\sigma_i)$ are polynomials in $K[t]$.
Substitution gives $H_i\circ\sigma_i=0$ identically. The coordinate
chosen as $t$, or the displayed linear factor chosen as $t$, is
nonconstant, so each chart satisfies Assumption~\ref{ass:cubic-charts}.

\emph{Assumption~\ref{ass:cubic-identities}: cube identities.}
Define $F_i=\Delta_i^4f\circ\sigma_i$ and
$G_{ij}=\Delta_i^2H_j\circ\sigma_i$. Each of the twenty $F_i$
is nonzero: their degrees, in vertex order, are
\[
 4,4,3,3,3,3,3,3,4,4,6,4,4,4,4,4,4,8,8,8.
\]
Use the polynomials $W_i$ in
Table~\ref{tab:detailed-roots}, put $n_i=0$, and define $Q_i$ by
dividing the left side of \eqref{eq:cubic-vertex} by $F_i$.
The congruences
\begin{equation*}
	\Delta_i^{2\sum_j\widetilde\rho_{ij}}W_i^3
	\equiv c_i\prod_jG_{ij}^{\widetilde\rho_{ij}}\pmod{F_i}\text{}
\end{equation*}
hold for all twenty vertices; equivalently, all twenty divisions are exact. For example, at vertex $0$, $F_0=-t^4-t-2-2\omega$, and the
neighbours restrict to $t-1$, $t-1-\omega$, $-1-t-t\omega$ and
$2+2\omega-t$, with weights $1,1,1,2$. The full identity is
\[
\begin{aligned}
 &(4+2t^3)^3-4(t-1)(t-1-\omega)(-1-t-t\omega)(2+2\omega-t)^2\\
 &\quad=(-t^4-t-2-2\omega)
 (-8t^5-40t^2+12t+(12t+24)\omega).
\end{aligned}
\]
All twenty polynomial remainders are zero; the quotient degrees,
in order, are
\[
 5,13,10,6,8,12,12,0,7,7,14,7,6,5,0,7,5,29,33,31.
\]
The only denominator in the roots is $3$ in $W_{19}$.

\emph{Assumption~\ref{ass:cubic-resultants}: edge resultants.}
The $58$ numbers $\Norm(\Res_t(F_i,G_{ij}))$ for the listed edges
are nonzero and are supported on $\{2,3\}$. These are fully reproducible
determinants from the displayed data; for example
$\Res(F_0,G_{0,8})=F_0(1)=-4-2\omega$, of norm $12$.
For each edge, take the B\'ezout polynomials $U_{ij},V_{ij}$
obtained from the adjugate of its Sylvester matrix; this specifies
\eqref{eq:cubic-bezout} using exact arithmetic.

\emph{Assumption~\ref{ass:cubic-primes}: exceptional primes.}
The set $S=\{2,3\}$ contains $3$. Since
$c_i=\omega^{\kappa_i}2^{r_i}\lambda^{s_i}$, its norm is
$4^{r_i}3^{s_i}$, and every edge-resultant norm is supported on $\{2,3\}$. The chart polynomials, $F_i$ and $G_{ij}$ have
Eisenstein-integer coefficients. The roots $W_i$ have coefficient denominators supported on
$\{3\}$, and all the quotients $Q_i$ are Eisenstein-integral,
as verified by the exact divisions. The adjugate construction makes
$U_{ij},V_{ij}$ Eisenstein-integral as well. This checks each
part of Assumption~\ref{ass:cubic-primes}.

\emph{Assumption~\ref{ass:cubic-coverage}: coverage.}
For $i<17$ the coefficient used to solve for a coordinate is a
unit of $\mathcal E$, so every zero of $H_i$ is represented modulo
every prime element. For $i=17,18,19$, each displayed factorization
has the form $H_i=tL+1$; hence $H_i=0$ forces $t\ne0$.
The additional divisors used in the charts are $\omega,-1,2+\omega$,
whose norms are $1,1,3$. They are nonzero modulo every $\pi\mid p$
with $p\notin S$. Thus every zero of each $H_i$, and in particular
every common zero of $f,H_i$, is covered as required.

All six algebraic assumptions have now been checked, so the first
conclusion of Theorem~\ref{prop:cubic-certificate} gives
\begin{equation*}
 B_p=0\quad(p\ne2,3).
\end{equation*}

\begingroup
		\footnotesize
		\setlength{\tabcolsep}{3pt}
		\renewcommand{\arraystretch}{1.16}
		\begin{longtable}{@{}rr@{\hspace{9pt}}p{0.425\textwidth}@{\hspace{7pt}}p{0.425\textwidth}@{}}
\caption{The polynomials $W_i(t)=(P_i(t)+\omega R_i(t))/\mu_i$ in the vertex identities for \eqref{eq:detailed}.}\label{tab:detailed-roots}\\
			\toprule $i$ & $\mu_i$ & $P_i(t)$ & $R_i(t)$\\\midrule
		0 & 1 & $2 t^{3} + 4$ & $0$\\\addlinespace[3pt]
		1 & 1 & $- 4 t - 4$ & $- 4 t - 4$\\\addlinespace[3pt]
		2 & 1 & $2 t^{2}$ & $2 t$\\\addlinespace[3pt]
		3 & 1 & $2 t^{2} + 4 t - 8$ & $6 t - 4$\\\addlinespace[3pt]
		4 & 1 & $2 t^{2} - 2 t + 2$ & $2 t^{2} - 2 t + 2$\\\addlinespace[3pt]
		5 & 1 & $2 t - 2$ & $- 2$\\\addlinespace[3pt]
		6 & 1 & $2 t^{2} - 4 t + 8$ & $2 t^{2} - 4 t + 2$\\\addlinespace[3pt]
		7 & 1 & $1$ & $0$\\\addlinespace[3pt]
		8 & 1 & $- 2 t^{3} - 2 t^{2} + 4 t$ & $2 t^{3} - 4 t^{2} + 2 t$\\\addlinespace[3pt]
		9 & 1 & $4 t^{3} - 16 t^{2} + 24 t - 12$ & $2 t^{3} - 8 t^{2} + 12 t - 6$\\\addlinespace[3pt]
		10 & 1 & $2 t^{4} - 2 t - 2$ & $- 2 t^{3} - 4 t^{2} - 2 t - 2$\\\addlinespace[3pt]
		11 & 1 & $- t + 1$ & $- t^{3} + t^{2}$\\\addlinespace[3pt]
		12 & 1 & $4 t^{3} - 8 t^{2} + 8 t - 4$ & $2 t^{3} - 4 t^{2} + 4 t - 2$\\\addlinespace[3pt]
		13 & 1 & $- t^{3} + 1$ & $t^{2} + t + 1$\\\addlinespace[3pt]
		14 & 1 & $2$ & $2$\\\addlinespace[3pt]
		15 & 1 & $3 t^{3} - 2 t^{2} - 2 t - 1$ & $- 4 t^{2} + 2 t - 2$\\\addlinespace[3pt]
		16 & 1 & $- t^{3} - t^{2} - t$ & $- 2 t^{3} - 2 t^{2} - 2 t$\\\addlinespace[3pt]
		17 & 1 & $\begin{aligned}[t]&- t^{7} + t^{6} - 4 t^{5} - t^{4}\\&\quad + 3 t^{3} - 4 t^{2} + 6 t - 4\end{aligned}$ & $\begin{aligned}[t]&- t^{7} + 2 t^{6} - 3 t^{5} - t^{4}\\&\quad + 2 t^{3} - 7 t^{2} + 4 t - 4\end{aligned}$\\\addlinespace[3pt]
		18 & 1 & $\begin{aligned}[t]&- 5 t^{7} + 28 t^{6} - 32 t^{5} - 16 t^{4}\\&\quad + 47 t^{3} - 45 t^{2} + 18 t - 1\end{aligned}$ & $\begin{aligned}[t]&- 4 t^{7} + 41 t^{6} - 125 t^{5} + 176 t^{4}\\&\quad - 164 t^{3} + 98 t^{2} - 40 t + 8\end{aligned}$\\\addlinespace[3pt]
		19 & 3 & $\begin{aligned}[t]&- t^{7} - t^{5} + 9 t^{4} + 18 t^{3}\\&\quad + 8 t^{2} - 3 t + 2\end{aligned}$ & $\begin{aligned}[t]&- 2 t^{7} - 3 t^{6} - 2 t^{5} + t^{2}\\&\quad - 12 t - 2\end{aligned}$\\\addlinespace[3pt]
			\bottomrule
		\end{longtable}
	\endgroup
	
	We now prove that $C_2=\{2\}$ and $C_3=\{0\}$ satisfy the last two assumptions.

\emph{Assumption~\ref{ass:cubic-local}: local bounds.}
Put $u_i=v_2(H_i)\pmod3$
and define $\eta_i\in\mathbb F_3$ by
$H_i/2^{v_2(H_i)}\equiv\omega^{\eta_i}\pmod2$.
Modulo $3$, write $(v_i,k_i,m_i,\ell_i)$ for the four coordinates of
$H_i$ in \eqref{eq:cubic-normalization}, all reduced modulo $3$.
Substitution in \eqref{eq:cubic-beta} and
\eqref{eq:cubic-graphsum} gives
\begin{equation}\label{eq:detailed-direct-sums}
\begin{aligned}
 -B_2=D_2+Q_2,&\quad
 D_2=\sum_i\bigl(r_i\eta_i+(s_i-\kappa_i)u_i\bigr),\\
 &Q_2=\sum_{i<j}e_{ij}(\eta_i u_j-\eta_j u_i),\\
 -B_3=D_3+Q_3,&\quad
 D_3=\sum_i\bigl(\kappa_i m_i+s_i\ell_i+r_i(v_i-k_i)\bigr),\\
 &Q_3=\sum_{i<j}e_{ij}
 (m_i k_j-m_j k_i+\ell_i v_j-\ell_j v_i).
\end{aligned}
\end{equation}
Here $D_p$ is the contribution of the constants to $-B_p$ and
$Q_p$ is the contribution of the edges. The minus signs follow
from the ordering of the arguments in \eqref{eq:cubic-beta}. Here we used $\lambda\equiv\omega^2\pmod2$, so that $c_i/2^{r_i}\equiv\omega^{\kappa_i-s_i}\pmod2$, and $2=-(-2)$ with $-2\equiv1\pmod3$, $M(-2)=1$ and $L(-2)=-1$, so that the coordinates of $c_i$ at $\lambda$ are $(s_i,\kappa_i,r_i,-r_i)$.

Modulo $2$, $f=0$ forces $y$ even. The class $(1,0)$ is terminal;
$(0,0)$ lifts to $(2,0),(0,2)$ modulo $4$, and the latter lifts
to $(0,6),(4,2)$ modulo $8$. Thus the following four rows cover
all integer solutions. The last three columns are obtained from
Table~\ref{tab:detailed-input2} and \eqref{eq:detailed-direct-sums}.
\begin{center}\small
\begin{tabular}{@{}rrrrrrr@{}}
\toprule Row&Modulus&$x$&$y$&$D_2$&$Q_2$&$-B_2$\\\midrule
1&2&1&0&1&0&1\\
2&4&2&0&$-1$&$-1$&1\\
3&8&0&6&$-a-1$&$a-1$&1\\
4&8&4&2&$-1$&$-1$&1\\\bottomrule
\end{tabular}
\end{center}
Here $a=u_2$ in row $3$ is unrestricted. In row $1$ the valuations
of $H_4,H_{12}$ are also unrestricted; their coefficients cancel.
Every row therefore gives $B_2=2$.

\begingroup
\small
\setlength{\tabcolsep}{7pt}
\begin{longtable}{@{}rcccc@{}}
\caption{Input pairs $(u_i,\eta_i)$ for the four classes at $2$. Letters denote unrestricted elements of $\mathbb F_3$ in their column.}\label{tab:detailed-input2}\\
\toprule
$i$ & Row 1 & Row 2 & Row 3 & Row 4\\
\midrule
\endfirsthead
\toprule
$i$ & Row 1 & Row 2 & Row 3 & Row 4\\
\midrule
\endhead
\bottomrule
\endfoot
0 & $(0,2)$ & $(0,2)$ & $(0,2)$ & $(0,2)$\\
1 & $(0,2)$ & $(0,2)$ & $(0,2)$ & $(0,2)$\\
2 & $(0,0)$ & $(1,0)$ & $(a,0)$ & $(2,0)$\\
3 & $(0,1)$ & $(0,2)$ & $(0,2)$ & $(0,2)$\\
4 & $(a,0)$ & $(0,0)$ & $(0,0)$ & $(0,0)$\\
5 & $(0,2)$ & $(0,1)$ & $(0,1)$ & $(0,1)$\\
6 & $(0,2)$ & $(0,1)$ & $(0,1)$ & $(0,1)$\\
7 & $(0,1)$ & $(0,2)$ & $(0,2)$ & $(0,2)$\\
8 & $(0,2)$ & $(0,1)$ & $(0,1)$ & $(0,1)$\\
9 & $(0,0)$ & $(1,2)$ & $(2,1)$ & $(2,0)$\\
10 & $(0,2)$ & $(0,1)$ & $(0,1)$ & $(0,1)$\\
11 & $(0,0)$ & $(0,0)$ & $(0,0)$ & $(0,0)$\\
12 & $(b,0)$ & $(0,0)$ & $(0,0)$ & $(0,0)$\\
13 & $(0,0)$ & $(1,0)$ & $(1,0)$ & $(1,0)$\\
14 & $(0,1)$ & $(0,1)$ & $(0,1)$ & $(0,1)$\\
15 & $(0,2)$ & $(0,0)$ & $(0,0)$ & $(0,0)$\\
16 & $(0,0)$ & $(0,1)$ & $(0,1)$ & $(0,1)$\\
17 & $(0,0)$ & $(0,0)$ & $(0,0)$ & $(0,0)$\\
18 & $(0,0)$ & $(0,1)$ & $(0,1)$ & $(0,1)$\\
19 & $(0,1)$ & $(0,0)$ & $(0,0)$ & $(0,0)$\\
\end{longtable}
\endgroup

Modulo $3$, the only solution is $(1,1)$. Its lifts modulo $9$
are $(1,7),(4,1),(7,4)$, and its nine lifts modulo $27$ are
listed below. They follow either by substitution or by
\eqref{eq:cubic-lifting}, with $f_x=-4x^3-1$,
$f_y=3y^2-2y+1$. All entries in the sum columns are in
$\mathbb F_3$.
\begin{center}\small
\begin{tabular}{@{}crrrrr@{}}
\toprule Row&$x$&$y$&$D_3$&$Q_3$&$-B_3$\\\midrule
A&1&7&0&0&0\\
B&4&10&$1-t$&$t-1$&0\\
C&7&22&$-t-1$&$t+1$&0\\
D&10&16&$-1$&1&0\\
E&13&19&$1-t$&$t-1$&0\\
F&16&4&$1-t$&$t-1$&0\\
G&19&25&1&$-1$&0\\
H&22&1&$1-t$&$t-1$&0\\
I&25&13&$-t$&$t$&0\\\bottomrule
\end{tabular}
\end{center}
In each row the letter $t$ ranges over $\mathbb F_3$ and records
an undetermined coordinate. Its occurrences cancel in $D_3+Q_3$.
It is unrelated to the parameter in \eqref{eq:detailed-conics}.
Table~\ref{tab:detailed-input3} supplies every input to these sums.
There an entry $1001$ means $(v_i,k_i,m_i,\ell_i)=(1,0,0,1)$,
and the uncertain entries mean exactly the following allowed sets:
\[
\begin{split}
 \mathsf R_1&=\{(1,1,-t,t):t\in\mathbb F_3\},\\
 \mathsf R&=\{(s,s,-t,t):s,t\in\mathbb F_3\},\\
 \mathsf S_{ab}&=\{(0,a,b,t):t\in\mathbb F_3\}.
\end{split}
\]
For example, in row $A$, $H_0\equiv8+\omega\pmod{27}$; division
by $\lambda$ gives $5+3\omega$, whose normalized associate is
$-5-3\omega$. This gives $(v_0,k_0,m_0,\ell_0)=(1,0,0,1)$.
In row $H$, $H_{11}=y-1$ is zero modulo $27$ but is nonzero
as an integer. 
Its valuation is not determined by the class, so \eqref{eq:cubic-rational} only gives the set $\mathsf R$; the coordinate $s$ does not occur in $D_3$ or $Q_3$, and $t$ cancels in $D_3+Q_3$. In rows $B$ and $E$, $H_{11}\equiv9$ and $18\pmod{27}$, respectively, so $v_\lambda(H_{11})=4$ and \eqref{eq:cubic-rational} gives $\mathsf R_1$. The entries $\mathsf S_{ab}$ of $H_{16}$, whose value has $v_\lambda=3$, follow by the enumeration modulo $9$ described above.
Consequently every lift in every row gives $B_3=0$.

\begingroup
\small
\setlength{\tabcolsep}{5pt}
\begin{longtable}{@{}rccccccccc@{}}
\caption{Input quadruples $(v_i,k_i,m_i,\ell_i)$ for the nine classes at $3$; the sets $\mathsf R_1,\mathsf R,\mathsf S_{ab}$ are defined in the text.}\label{tab:detailed-input3}\\
\toprule
$i$ & $A$ & $B$ & $C$ & $D$ & $E$ & $F$ & $G$ & $H$ & $I$\\
\midrule
\endfirsthead
\toprule
$i$ & $A$ & $B$ & $C$ & $D$ & $E$ & $F$ & $G$ & $H$ & $I$\\
\midrule
\endhead
\bottomrule
\endfoot
0 & $1001$ & $1201$ & $1122$ & $1002$ & $1202$ & $1120$ & $1000$ & $1200$ & $1121$\\
1 & $0122$ & $0100$ & $0111$ & $0122$ & $0100$ & $0111$ & $0122$ & $0100$ & $0111$\\
2 & $0000$ & $0021$ & $0012$ & $0000$ & $0021$ & $0012$ & $0000$ & $0021$ & $0012$\\
3 & $1200$ & $1120$ & $1001$ & $1201$ & $1121$ & $1002$ & $1202$ & $1122$ & $1000$\\
4 & $0012$ & $0021$ & $0000$ & $0012$ & $0021$ & $0000$ & $0012$ & $0021$ & $0000$\\
5 & $0200$ & $0210$ & $0220$ & $0200$ & $0210$ & $0220$ & $0200$ & $0210$ & $0220$\\
6 & $1000$ & $1122$ & $1202$ & $1002$ & $1121$ & $1201$ & $1001$ & $1120$ & $1200$\\
7 & $0100$ & $0111$ & $0122$ & $0100$ & $0111$ & $0122$ & $0100$ & $0111$ & $0122$\\
8 & $1001$ & $1121$ & $1200$ & $1000$ & $1120$ & $1202$ & $1002$ & $1122$ & $1201$\\
9 & $1222$ & $1121$ & $1012$ & $1220$ & $1122$ & $1010$ & $1221$ & $1120$ & $1011$\\
10 & $0001$ & $0001$ & $0001$ & $0001$ & $0001$ & $0001$ & $0001$ & $0001$ & $0001$\\
11 & $2212$ & $\mathsf R_1$ & $2212$ & $2221$ & $\mathsf R_1$ & $2200$ & $2200$ & $\mathsf R$ & $2221$\\
12 & $0021$ & $0021$ & $0021$ & $0021$ & $0021$ & $0021$ & $0021$ & $0021$ & $0021$\\
13 & $0000$ & $0021$ & $0012$ & $0000$ & $0021$ & $0012$ & $0000$ & $0021$ & $0012$\\
14 & $0210$ & $0200$ & $0220$ & $0210$ & $0200$ & $0220$ & $0210$ & $0200$ & $0220$\\
15 & $0122$ & $0121$ & $0120$ & $0122$ & $0121$ & $0120$ & $0122$ & $0121$ & $0120$\\
16 & $2212$ & $2120$ & $\mathsf S_{11}$ & $2202$ & $2101$ & $\mathsf S_{22}$ & $2222$ & $2112$ & $\mathsf S_{02}$\\
17 & $1011$ & $1220$ & $1121$ & $1012$ & $1221$ & $1122$ & $1010$ & $1222$ & $1120$\\
18 & $0101$ & $0121$ & $0111$ & $0101$ & $0121$ & $0111$ & $0101$ & $0121$ & $0111$\\
19 & $0210$ & $0202$ & $0221$ & $0210$ & $0202$ & $0221$ & $0210$ & $0202$ & $0221$\\
\end{longtable}
\endgroup

The four classes at $2$ and the nine classes at $3$ cover every
integer solution, and all undetermined coordinates cancel within
each class. Thus $B_2\in C_2=\{2\}$ and $B_3\in C_3=\{0\}$,
which verifies Assumption~\ref{ass:cubic-local}.

\emph{Assumption~\ref{ass:cubic-exclusion}: incompatibility.}
In $\mathbb F_3$ one has $C_2+C_3=\{2\}$, which excludes $0$.

All eight assumptions are therefore satisfied.
Theorem~\ref{prop:cubic-certificate} proves that
\eqref{eq:detailed} has no integer solutions.
\end{proof}

\subsection{Three further equations}\label{sec:cubic-outlines}

In this subsection, we prove that equations
\begin{align}
	y^3+xy-y&=x^4+x+2,\label{eq:three-cubic-1}\\
	y^3+xy+y&=x^4+4,\label{eq:three-cubic-2}\\
	y^3+xy-y&=x^4+4\label{eq:three-cubic-3}
\end{align}
have no integer solutions. For each equation, let $f$ denote its left-hand side minus its right-hand side. 
For equations \eqref{eq:three-cubic-2} and \eqref{eq:three-cubic-3}, before applying the main method, we will first use quadratic norm identities to restrict the possible congruence classes of an integer solution.

\paragraph{Preliminary quadratic restrictions.}
Write the last two equations as $f_s=0$, where
\[
f_s=y^3+xy+sy-x^4-4,\qquad s\in\{1,-1\}.
\]
Here $s=1$ corresponds to \eqref{eq:three-cubic-2}, and $s=-1$
to \eqref{eq:three-cubic-3}. At an integer solution we have $y>0$. Indeed, $y=0$ is impossible, and if $y=-z\le-1$, then $x+y^2+s<0$ forces $x=-n$ with $n\ge z^2$, so that $n^4+4=z(n-z^2-s)\le zn\le n^2$, which is absurd. Moreover, 
\begin{equation*}
	y(x+y^2+s)=(x^2)^2+2^2.
\end{equation*}
For the additional norm identity, define
\[
H_s=g_s+r_s,\qquad
T_s=b_s-g_s^3+g_s^2r_s-g_sr_s^2+r_s^3,
\]
using the following polynomials and constants:
\[
\begin{array}{c|l|l}
	&s=1&s=-1\\\hline
	g_s&2xy&4x\\
	r_s&y^2-2y+2&2y+5\\
	b_s&8y^4&64y\\
	d_s&-2&2\\
	U_s&y^4-12y^3+12y^2-40y+68&4y^2-12y+9\\
	V_s&-8y^3+8y^2-28y+48&16y+28
\end{array}
\]
Exact polynomial expansion gives
\begin{equation*}
	H_sT_s-U_s^2-d_sV_s^2=
	\begin{cases}
		16y^4f_s,&s=1,\\
		256f_s,&s=-1.
	\end{cases}
\end{equation*}
Thus $H_sT_s=U_s^2+d_sV_s^2$ at a solution. Since $\Res_y(U_1,V_1)=2^{10}\cdot3313$ and $\Res_y(U_{-1},V_{-1})=2^6\cdot13^2$ are nonzero, and $V_{-1}=16y+28\ne0$, the right-hand side $U_s^2+d_sV_s^2$ is nonzero. Moreover $H_{-1}>0$: this is clear for $x>0$, while for $x\le0$ we have $T_{-1}=64y+(r_{-1}-g_{-1})(r_{-1}^2+g_{-1}^2)>0$. A common odd prime divisor of $U_s$ and $V_s$ divides the resultant above. Here $3313\equiv1\pmod8$, and for $s=-1$ the congruences $H_{-1}\equiv U_{-1}\equiv V_{-1}\equiv0\pmod{13}$ force $(x,y)\equiv(11,8)\pmod{13}$, where $T_{-1}\equiv12\pmod{13}$.

Therefore Corollary~\ref{cor:norm-congruence-cases}, applied with $d=1$ to $y(x+y^2+s)=(x^2)^2+2^2$ and with $d=d_s$ to $H_sT_s=U_s^2+d_sV_s^2$, shows that the odd parts of $y$ and $x+y^2+s$ are $1$ modulo $4$, that the odd part of $|H_1|$ is $\pm1$ modulo $8$, and that $H_{-1}\equiv1,3\pmod8$. Reduction modulo $8$ then gives
\begin{equation}\label{eq:cubic-preliminary}
	\begin{aligned}
		\eqref{eq:three-cubic-2}:&\quad
		(x,y)\equiv(3,1),(7,5)\pmod8,\\
		\eqref{eq:three-cubic-3}:&\quad
		(x,y)\equiv(1,5),(5,1)\pmod8.
	\end{aligned}
\end{equation}
The cubic graphs will now exclude the remaining classes.

\paragraph{Reading the graph tables.}
We write $\langle A,B\rangle=A+B\omega$, for integers or
polynomials $A,B$. Each graph is over the field $K$ given in its header and has vertex indices $0,\ldots,m-1$. The header of a graph also gives $n$, the exponent of $W_i$ in its vertex identity: $n=3$ for cubic graphs, and $n=2$ for the quadratic graphs used for \eqref{eq:H34last}. A row
$i:j_1^{e_1}\;j_2^{e_2}\cdots$ records the edges
$\{i,j_k\}$, with $i<j_k$ and weight $e_{ij_k}=e_k$.
Unlisted weights are zero; a superscript $1$ may be omitted.
Reverse orientations have the negative weight modulo $n$.
An isolated vertex with constant $1$ makes no contribution.

\needspace{8\baselineskip}\paragraph{Cubic graph for \eqref{eq:three-cubic-1}.}
$f=y^3+xy-y-x^4-x-2$.
\par\smallskip\noindent
\textit{Graph: $K=\Q(\omega),\ n=3,\ m=21$, $54$ edges.}
\begingroup
\footnotesize
\setlength{\tabcolsep}{3pt}
\setlength{\LTpre}{3pt}
\setlength{\LTpost}{3pt}
\renewcommand{\arraystretch}{1.05}
\begin{longtable}{@{}rll@{\hspace{12pt}}rll@{}}
	\toprule
	$i$ & $H_i$ & $c_i$ & $i$ & $H_i$ & $c_i$\\
	\midrule\endfirsthead
	\multicolumn{6}{l}{\textit{Vertices (continued)}}\\
	\toprule
	$i$ & $H_i$ & $c_i$ & $i$ & $H_i$ & $c_i$\\
	\midrule\endhead
	\bottomrule\endfoot
	0 & $\langle y+1,1\rangle$ & $\langle-1,-1\rangle$ & 11 & $\langle x+1,y+1\rangle$ & $\langle1,0\rangle$\\
	1 & $\langle y-2,0\rangle$ & $\langle-6,0\rangle$ & 12 & $\langle x+y,0\rangle$ & $\langle-4,-2\rangle$\\
	2 & $\langle x+1,1\rangle$ & $\langle1,2\rangle$ & 13 & $\langle3x-y+1,-2y-1\rangle$ & $\langle0,1\rangle$\\
	3 & $\langle x+1,0\rangle$ & $\langle1,0\rangle$ & 14 & $\langle x+y+1,x+1\rangle$ & $\langle2,0\rangle$\\
	4 & $\langle x,1\rangle$ & $\langle-4,-4\rangle$ & 15 & $\langle y+1,x+1\rangle$ & $\langle0,1\rangle$\\
	5 & $\langle x,-1\rangle$ & $\langle1,-1\rangle$ & 16 & $\langle y,x\rangle$ & $\langle1,-1\rangle$\\
	6 & $\langle x-1,0\rangle$ & $\langle3,3\rangle$ & 17 & $\langle y-1,0\rangle$ & $\langle2,0\rangle$\\
	7 & $\langle x-1,-1\rangle$ & $\langle2,4\rangle$ & 18 & $\langle x^2+y-1,0\rangle$ & $\langle-2,-2\rangle$\\
	8 & $\langle x+2,0\rangle$ & $\langle2,4\rangle$ & 19 & $\langle x+y+1,1-x^2\rangle$ & $\langle-3,0\rangle$\\
	9 & $\langle-x+y,0\rangle$ & $\langle4,-4\rangle$ & 20 & $\langle-x^2+y,-x^2+x\rangle$ & $\langle2,4\rangle$\\
	10 & $\langle x+y,y\rangle$ & $\langle-1,-1\rangle$ & & &\\
\end{longtable}\addtocounter{table}{-1}
\par\noindent
\edgerow{0}{2^{1}\;\allowbreak 6^{1}\;\allowbreak 14^{1}\;\allowbreak 18^{2}}
\edgerow{1}{2^{2}\;\allowbreak 4^{1}\;\allowbreak 5^{2}\;\allowbreak 13^{1}\;\allowbreak 18^{2}\;\allowbreak 20^{2}}
\edgerow{2}{12^{2}\;\allowbreak 15^{2}\;\allowbreak 19^{2}\;\allowbreak 20^{2}}
\edgerow{3}{14^{2}\;\allowbreak 18^{1}}
\edgerow{4}{10^{2}\;\allowbreak 14^{2}\;\allowbreak 16^{2}\;\allowbreak 17^{1}\;\allowbreak 18^{2}\;\allowbreak 19^{2}\;\allowbreak 20^{1}}
\edgerow{5}{11^{2}\;\allowbreak 15^{1}}
\edgerow{6}{10^{2}\;\allowbreak 20^{2}}
\edgerow{7}{10^{2}\;\allowbreak 13^{2}\;\allowbreak 15^{1}\;\allowbreak 17^{1}\;\allowbreak 19^{2}}
\edgerow{8}{9^{2}\;\allowbreak 17^{1}}
\edgerow{9}{11^{2}\;\allowbreak 13^{2}\;\allowbreak 17^{1}\;\allowbreak 18^{1}\;\allowbreak 20^{1}}
\edgerow{10}{12^{2}\;\allowbreak 19^{2}\;\allowbreak 20^{2}}
\edgerow{11}{19^{2}}
\edgerow{12}{14^{1}\;\allowbreak 17^{2}\;\allowbreak 18^{1}}
\edgerow{13}{16^{2}\;\allowbreak 19^{2}\;\allowbreak 20^{1}}
\edgerow{14}{16^{1}\;\allowbreak 18^{1}\;\allowbreak 20^{1}}
\edgerow{15}{19^{1}}
\edgerow{16}{18^{1}}
\par\endgroup
\par\addvspace{10pt}

\needspace{8\baselineskip}\paragraph{Cubic graph for \eqref{eq:three-cubic-2}.}
$f=y^3+xy+y-x^4-4$.
\par\smallskip\noindent
\textit{Graph: $K=\Q(\omega),\ n=3,\ m=37$, $123$ edges.}
\begingroup
\footnotesize
\setlength{\tabcolsep}{3pt}
\setlength{\LTpre}{3pt}
\setlength{\LTpost}{3pt}
\renewcommand{\arraystretch}{1.05}
\begin{longtable}{@{}rll@{\hspace{12pt}}rll@{}}
	\toprule
	$i$ & $H_i$ & $c_i$ & $i$ & $H_i$ & $c_i$\\
	\midrule\endfirsthead
	\multicolumn{6}{l}{\textit{Vertices (continued)}}\\
	\toprule
	$i$ & $H_i$ & $c_i$ & $i$ & $H_i$ & $c_i$\\
	\midrule\endhead
	\bottomrule\endfoot
	0 & $\langle y,0\rangle$ & $\langle-2,-2\rangle$ & 19 & $\langle x+y+1,x\rangle$ & $\langle1,2\rangle$\\
	1 & $\langle y+1,1\rangle$ & $\langle1,0\rangle$ & 20 & $\langle x+y,1\rangle$ & $\langle-4,-4\rangle$\\
	2 & $\langle y+1,0\rangle$ & $\langle0,1\rangle$ & 21 & $\langle-x+y-1,-x\rangle$ & $\langle4,0\rangle$\\
	3 & $\langle y,-1\rangle$ & $\langle3,-5\rangle$ & 22 & $\langle x+y-1,-1\rangle$ & $\langle1,0\rangle$\\
	4 & $\langle y-1,0\rangle$ & $\langle0,4\rangle$ & 23 & $\langle x+y,y\rangle$ & $\langle4,-4\rangle$\\
	5 & $\langle y-1,-1\rangle$ & $\langle0,-3\rangle$ & 24 & $\langle x,y\rangle$ & $\langle2,0\rangle$\\
	6 & $\langle y-1,-2\rangle$ & $\langle-20,12\rangle$ & 25 & $\langle x,-y\rangle$ & $\langle6,6\rangle$\\
	7 & $\langle y+2,2\rangle$ & $\langle-12,0\rangle$ & 26 & $\langle x-y,0\rangle$ & $\langle-2,-2\rangle$\\
	8 & $\langle y-2,0\rangle$ & $\langle0,-21\rangle$ & 27 & $\langle x+1,y+1\rangle$ & $\langle1,0\rangle$\\
	9 & $\langle x+1,0\rangle$ & $\langle-12,0\rangle$ & 28 & $\langle x-y+1,1-y\rangle$ & $\langle4,0\rangle$\\
	10 & $\langle x,1\rangle$ & $\langle-3,0\rangle$ & 29 & $\langle x+y+2,y+1\rangle$ & $\langle1,0\rangle$\\
	11 & $\langle x,-1\rangle$ & $\langle-1,-1\rangle$ & 30 & $\langle x-y+2,0\rangle$ & $\langle-12,0\rangle$\\
	12 & $\langle x-1,0\rangle$ & $\langle-5,3\rangle$ & 31 & $\langle y,x^2\rangle$ & $\langle0,-6\rangle$\\
	13 & $\langle x-1,-1\rangle$ & $\langle0,1\rangle$ & 32 & $\langle x^2-x+y+1,0\rangle$ & $\langle-1,-1\rangle$\\
	14 & $\langle x+2,1\rangle$ & $\langle-1,2\rangle$ & 33 & $\langle x+y-1,x^2+x\rangle$ & $\langle-1,-1\rangle$\\
	15 & $\langle x+1,-1\rangle$ & $\langle-8,-3\rangle$ & 34 & $\langle-x^2+y-1,1-x^2\rangle$ & $\langle1,2\rangle$\\
	16 & $\langle x+2,2\rangle$ & $\langle16,20\rangle$ & 35 & $\langle y-2,x^2-1\rangle$ & $\langle1,0\rangle$\\
	17 & $\langle y,x\rangle$ & $\langle-2,-2\rangle$ & 36 & $\langle y^2-2y+1,-x^2-xy-2y+2\rangle$ & $\langle-1,-1\rangle$\\
	18 & $\langle x+y+1,1\rangle$ & $\langle0,1\rangle$ & & &\\
\end{longtable}\addtocounter{table}{-1}
\par\noindent
\edgerow{0}{17^{1}\;\allowbreak 34^{2}\;\allowbreak 35^{2}}
\edgerow{1}{13^{1}\;\allowbreak 14^{1}\;\allowbreak 20^{1}\;\allowbreak 27^{1}\;\allowbreak 28^{2}\;\allowbreak 29^{1}\;\allowbreak 34^{1}}
\edgerow{2}{11^{2}\;\allowbreak 12^{1}\;\allowbreak 14^{1}\;\allowbreak 15^{1}\;\allowbreak 17^{1}\;\allowbreak 19^{2}\;\allowbreak 20^{2}\;\allowbreak 23^{1}\;\allowbreak 24^{2}\;\allowbreak 32^{2}\;\allowbreak 33^{2}\;\allowbreak 34^{1}}
\edgerow{3}{10^{1}\;\allowbreak 15^{1}\;\allowbreak 32^{1}\;\allowbreak 35^{2}}
\edgerow{4}{12^{1}\;\allowbreak 20^{1}\;\allowbreak 22^{1}\;\allowbreak 28^{2}\;\allowbreak 30^{1}\;\allowbreak 36^{2}}
\edgerow{5}{20^{2}\;\allowbreak 24^{1}\;\allowbreak 26^{2}\;\allowbreak 28^{1}\;\allowbreak 33^{2}}
\edgerow{6}{12^{2}\;\allowbreak 13^{1}\;\allowbreak 25^{1}\;\allowbreak 32^{1}}
\edgerow{7}{9^{2}\;\allowbreak 10^{1}\;\allowbreak 13^{2}\;\allowbreak 16^{2}\;\allowbreak 25^{1}\;\allowbreak 31^{2}\;\allowbreak 32^{2}\;\allowbreak 33^{1}}
\edgerow{8}{10^{2}\;\allowbreak 28^{2}\;\allowbreak 32^{2}\;\allowbreak 35^{2}}
\edgerow{9}{23^{1}\;\allowbreak 24^{2}\;\allowbreak 27^{1}}
\edgerow{10}{22^{1}\;\allowbreak 26^{1}\;\allowbreak 27^{2}\;\allowbreak 29^{1}\;\allowbreak 31^{1}}
\edgerow{11}{18^{2}\;\allowbreak 23^{1}\;\allowbreak 30^{2}\;\allowbreak 33^{1}}
\edgerow{12}{17^{1}\;\allowbreak 22^{1}\;\allowbreak 23^{1}\;\allowbreak 32^{1}\;\allowbreak 34^{1}}
\edgerow{13}{20^{2}\;\allowbreak 21^{1}\;\allowbreak 25^{2}\;\allowbreak 27^{1}\;\allowbreak 29^{2}\;\allowbreak 35^{2}}
\edgerow{14}{22^{2}\;\allowbreak 30^{2}}
\edgerow{15}{17^{1}\;\allowbreak 22^{1}\;\allowbreak 23^{1}\;\allowbreak 24^{1}}
\edgerow{16}{19^{1}\;\allowbreak 21^{1}\;\allowbreak 23^{2}\;\allowbreak 24^{1}\;\allowbreak 30^{1}\;\allowbreak 31^{1}\;\allowbreak 34^{2}}
\edgerow{17}{30^{1}}
\edgerow{18}{19^{1}\;\allowbreak 24^{2}\;\allowbreak 25^{1}\;\allowbreak 29^{2}\;\allowbreak 31^{1}}
\edgerow{19}{22^{1}\;\allowbreak 28^{1}\;\allowbreak 30^{2}\;\allowbreak 32^{1}\;\allowbreak 33^{2}}
\edgerow{20}{31^{1}\;\allowbreak 35^{1}}
\edgerow{21}{25^{1}\;\allowbreak 30^{2}\;\allowbreak 33^{1}\;\allowbreak 35^{1}}
\edgerow{22}{25^{2}}
\edgerow{23}{28^{2}\;\allowbreak 32^{2}\;\allowbreak 35^{1}}
\edgerow{24}{25^{1}}
\edgerow{25}{32^{1}\;\allowbreak 34^{1}\;\allowbreak 36^{2}}
\edgerow{26}{31^{1}}
\edgerow{27}{34^{2}\;\allowbreak 36^{1}}
\edgerow{28}{29^{2}\;\allowbreak 34^{2}}
\edgerow{29}{33^{1}}
\edgerow{30}{32^{2}}
\edgerow{31}{34^{1}}
\edgerow{32}{36^{1}}
\par\endgroup
\par\addvspace{10pt}

\needspace{8\baselineskip}\paragraph{Cubic graph for \eqref{eq:three-cubic-3}.}
$f=y^3+xy-y-x^4-4$.
\par\smallskip\noindent
\textit{Graph: $K=\Q(\omega),\ n=3,\ m=47$, $180$ edges.}
\begingroup
\footnotesize
\setlength{\tabcolsep}{3pt}
\setlength{\LTpre}{3pt}
\setlength{\LTpost}{3pt}
\renewcommand{\arraystretch}{1.05}
\begin{longtable}{@{}rll@{\hspace{12pt}}rll@{}}
	\toprule
	$i$ & $H_i$ & $c_i$ & $i$ & $H_i$ & $c_i$\\
	\midrule\endfirsthead
	\multicolumn{6}{l}{\textit{Vertices (continued)}}\\
	\toprule
	$i$ & $H_i$ & $c_i$ & $i$ & $H_i$ & $c_i$\\
	\midrule\endhead
	\bottomrule\endfoot
	0 & $\langle y,0\rangle$ & $\langle0,-3\rangle$ & 24 & $\langle x+y-2,-1\rangle$ & $\langle-2,-1\rangle$\\
	1 & $\langle y+1,1\rangle$ & $\langle0,1\rangle$ & 25 & $\langle x+y,x-2\rangle$ & $\langle0,-3\rangle$\\
	2 & $\langle y+1,0\rangle$ & $\langle-4,-4\rangle$ & 26 & $\langle x+y,y\rangle$ & $\langle0,1\rangle$\\
	3 & $\langle y-1,0\rangle$ & $\langle4,0\rangle$ & 27 & $\langle x+y,0\rangle$ & $\langle-1,-1\rangle$\\
	4 & $\langle y-1,-1\rangle$ & $\langle-1,-1\rangle$ & 28 & $\langle x,y\rangle$ & $\langle6,6\rangle$\\
	5 & $\langle y+2,1\rangle$ & $\langle-1,-1\rangle$ & 29 & $\langle x,-y\rangle$ & $\langle-4,-4\rangle$\\
	6 & $\langle y-2,0\rangle$ & $\langle2,0\rangle$ & 30 & $\langle x-y,-y\rangle$ & $\langle-6,0\rangle$\\
	7 & $\langle x,0\rangle$ & $\langle1,0\rangle$ & 31 & $\langle x-y+1,1\rangle$ & $\langle1,-1\rangle$\\
	8 & $\langle x+1,0\rangle$ & $\langle-2,-2\rangle$ & 32 & $\langle x-y,-1\rangle$ & $\langle0,2\rangle$\\
	9 & $\langle x,1\rangle$ & $\langle-3,0\rangle$ & 33 & $\langle x-1,-y\rangle$ & $\langle0,-3\rangle$\\
	10 & $\langle x,-1\rangle$ & $\langle1,2\rangle$ & 34 & $\langle x-y-1,0\rangle$ & $\langle0,-3\rangle$\\
	11 & $\langle x-1,0\rangle$ & $\langle12,12\rangle$ & 35 & $\langle x+y+2,1\rangle$ & $\langle3,3\rangle$\\
	12 & $\langle x-1,-1\rangle$ & $\langle1,0\rangle$ & 36 & $\langle x-2,y-1\rangle$ & $\langle4,-4\rangle$\\
	13 & $\langle x-1,1\rangle$ & $\langle0,1\rangle$ & 37 & $\langle y+1,x^2-x+1\rangle$ & $\langle0,4\rangle$\\
	14 & $\langle x-2,-1\rangle$ & $\langle1,0\rangle$ & 38 & $\langle x^2+x+y+1,0\rangle$ & $\langle-1,-1\rangle$\\
	15 & $\langle x,-2\rangle$ & $\langle1,-1\rangle$ & 39 & $\langle-x^2+x+y,-x^2+x-1\rangle$ & $\langle1,2\rangle$\\
	16 & $\langle-x+y+1,1\rangle$ & $\langle1,0\rangle$ & 40 & $\langle-x^2-x+y-1,-x^2-x\rangle$ & $\langle2,-2\rangle$\\
	17 & $\langle x+y+1,0\rangle$ & $\langle0,1\rangle$ & 41 & $\langle-x+y-1,-x^2-x-1\rangle$ & $\langle1,0\rangle$\\
	18 & $\langle y+1,-x\rangle$ & $\langle4,8\rangle$ & 42 & $\langle x^2+y+1,x^2-1\rangle$ & $\langle0,1\rangle$\\
	19 & $\langle-x+y,-x-1\rangle$ & $\langle1,0\rangle$ & 43 & $\langle x^2+x+y+2,x^2+x+1\rangle$ & $\langle-2,-2\rangle$\\
	20 & $\langle x+y-1,x\rangle$ & $\langle1,0\rangle$ & 44 & $\langle y+1,-x^2-x-1\rangle$ & $\langle-1,-1\rangle$\\
	21 & $\langle y+1,x+2\rangle$ & $\langle0,4\rangle$ & 45 & $\langle x+y^2-y-1,1-y\rangle$ & $\langle1,0\rangle$\\
	22 & $\langle-x+y+1,-x-1\rangle$ & $\langle3,3\rangle$ & 46 & $\langle xy+x+y^2+y,xy+x-2\rangle$ & $\langle2,0\rangle$\\
	23 & $\langle y-1,x+1\rangle$ & $\langle1,2\rangle$ & & &\\
\end{longtable}\addtocounter{table}{-1}
\par\noindent
\edgerow{0}{22^{2}\;\allowbreak 39^{1}\;\allowbreak 40^{1}}
\edgerow{1}{21^{1}\;\allowbreak 25^{1}\;\allowbreak 26^{2}\;\allowbreak 27^{1}\;\allowbreak 37^{2}\;\allowbreak 38^{1}\;\allowbreak 42^{1}}
\edgerow{2}{28^{1}\;\allowbreak 35^{1}\;\allowbreak 37^{1}\;\allowbreak 38^{1}\;\allowbreak 43^{2}\;\allowbreak 46^{2}}
\edgerow{3}{18^{2}\;\allowbreak 31^{2}\;\allowbreak 32^{1}\;\allowbreak 38^{1}\;\allowbreak 39^{2}\;\allowbreak 40^{2}\;\allowbreak 43^{2}\;\allowbreak 44^{2}\;\allowbreak 45^{2}}
\edgerow{4}{25^{1}\;\allowbreak 30^{2}\;\allowbreak 32^{1}\;\allowbreak 37^{1}\;\allowbreak 44^{1}}
\edgerow{5}{11^{2}\;\allowbreak 34^{2}\;\allowbreak 39^{1}\;\allowbreak 41^{2}\;\allowbreak 44^{1}}
\edgerow{6}{7^{2}\;\allowbreak 8^{1}}
\edgerow{7}{16^{2}\;\allowbreak 19^{1}\;\allowbreak 20^{2}\;\allowbreak 21^{2}\;\allowbreak 23^{2}\;\allowbreak 25^{2}\;\allowbreak 29^{1}\;\allowbreak 34^{1}\;\allowbreak 37^{2}\;\allowbreak 39^{1}\;\allowbreak 46^{1}}
\edgerow{8}{24^{1}\;\allowbreak 30^{1}\;\allowbreak 40^{2}\;\allowbreak 43^{2}\;\allowbreak 44^{1}}
\edgerow{9}{16^{1}\;\allowbreak 24^{2}\;\allowbreak 30^{2}\;\allowbreak 34^{2}\;\allowbreak 37^{2}\;\allowbreak 42^{1}\;\allowbreak 45^{2}}
\edgerow{10}{19^{1}\;\allowbreak 21^{1}\;\allowbreak 23^{1}\;\allowbreak 28^{1}\;\allowbreak 30^{2}\;\allowbreak 37^{1}\;\allowbreak 44^{1}\;\allowbreak 45^{2}}
\edgerow{11}{18^{2}\;\allowbreak 21^{2}\;\allowbreak 22^{2}\;\allowbreak 27^{2}\;\allowbreak 29^{2}\;\allowbreak 39^{2}\;\allowbreak 40^{1}\;\allowbreak 44^{2}\;\allowbreak 45^{1}\;\allowbreak 46^{1}}
\edgerow{12}{19^{2}\;\allowbreak 33^{2}\;\allowbreak 36^{2}\;\allowbreak 42^{1}\;\allowbreak 43^{1}}
\edgerow{13}{19^{1}\;\allowbreak 21^{1}\;\allowbreak 24^{1}\;\allowbreak 29^{1}\;\allowbreak 34^{2}\;\allowbreak 36^{1}\;\allowbreak 39^{2}\;\allowbreak 44^{2}}
\edgerow{14}{20^{1}\;\allowbreak 26^{2}\;\allowbreak 29^{2}\;\allowbreak 34^{2}\;\allowbreak 39^{1}}
\edgerow{15}{19^{1}\;\allowbreak 20^{1}\;\allowbreak 23^{1}\;\allowbreak 25^{2}\;\allowbreak 28^{1}\;\allowbreak 34^{2}\;\allowbreak 45^{1}}
\edgerow{16}{22^{1}\;\allowbreak 24^{1}\;\allowbreak 25^{2}\;\allowbreak 30^{2}\;\allowbreak 36^{2}\;\allowbreak 39^{1}}
\edgerow{17}{21^{1}\;\allowbreak 44^{2}\;\allowbreak 46^{2}}
\edgerow{18}{21^{1}\;\allowbreak 40^{2}\;\allowbreak 44^{2}}
\edgerow{19}{23^{1}\;\allowbreak 34^{2}\;\allowbreak 41^{1}\;\allowbreak 42^{2}\;\allowbreak 43^{2}\;\allowbreak 45^{2}}
\edgerow{20}{21^{2}\;\allowbreak 27^{1}\;\allowbreak 39^{1}\;\allowbreak 40^{1}\;\allowbreak 45^{1}}
\edgerow{21}{25^{2}\;\allowbreak 34^{2}\;\allowbreak 43^{1}\;\allowbreak 44^{1}}
\edgerow{22}{25^{1}\;\allowbreak 30^{2}\;\allowbreak 34^{2}\;\allowbreak 39^{1}\;\allowbreak 40^{2}}
\edgerow{23}{39^{2}\;\allowbreak 45^{1}}
\edgerow{24}{28^{1}\;\allowbreak 37^{1}}
\edgerow{25}{30^{1}\;\allowbreak 34^{1}\;\allowbreak 37^{1}\;\allowbreak 39^{1}\;\allowbreak 44^{1}}
\edgerow{26}{30^{2}\;\allowbreak 37^{1}\;\allowbreak 43^{2}}
\edgerow{27}{33^{2}\;\allowbreak 34^{1}\;\allowbreak 40^{2}}
\edgerow{28}{39^{2}\;\allowbreak 45^{1}}
\edgerow{29}{42^{2}\;\allowbreak 44^{1}}
\edgerow{30}{37^{1}\;\allowbreak 39^{2}\;\allowbreak 44^{1}}
\edgerow{31}{39^{2}\;\allowbreak 40^{2}\;\allowbreak 43^{1}}
\edgerow{32}{35^{1}\;\allowbreak 37^{2}\;\allowbreak 40^{1}\;\allowbreak 43^{2}}
\edgerow{33}{37^{2}\;\allowbreak 43^{1}}
\edgerow{34}{36^{2}\;\allowbreak 37^{2}\;\allowbreak 40^{1}}
\edgerow{35}{37^{1}\;\allowbreak 40^{1}}
\edgerow{36}{39^{1}}
\edgerow{38}{43^{2}\;\allowbreak 44^{1}}
\edgerow{39}{41^{1}\;\allowbreak 43^{1}\;\allowbreak 45^{1}}
\edgerow{41}{45^{1}}
\edgerow{42}{43^{2}}
\edgerow{43}{46^{1}}
\par\endgroup
\par\addvspace{10pt}

\begin{proposition}\label{prop:three-cubic}
	None of the equations \eqref{eq:three-cubic-1},
	\eqref{eq:three-cubic-2} and \eqref{eq:three-cubic-3}
	has an integer solution.
\end{proposition}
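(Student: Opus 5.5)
The plan is to apply Theorem~\ref{prop:cubic-certificate} once for each of the three equations, with the vertex and edge data displayed in the three graph tables above, and to verify the eight assumptions in the same order as in the proof of Proposition~\ref{prop:detailed}. For Assumption~\ref{ass:cubic-nonzero}, a vertex $\langle A,B\rangle$ vanishes only if $A=B=0$. When $B$ is a nonzero constant there is nothing to check. When $B$ is linear or quadratic in $x,y$, the system $A=B=0$ has finitely many integer points (or a one-parameter family), and substituting into $f$ rules these out. For the rational-valued vertices ($B=0$), we substitute $x=t$ or $y=t$ and use the rational root test, exactly as in the table in Section~\ref{sec:detailed}.

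Assumption~\ref{ass:cubic-charts} is handled as follows. Most vertices are linear in one of $x,y$ with unit leading coefficient in $\mathcal E$, so we solve for that coordinate without denominators. The quadratic vertices, for example $H_{36}$ for \eqref{eq:three-cubic-2} and $H_{45},H_{46}$ for \eqref{eq:three-cubic-3}, are either linear in $x$ (so $x$ can be solved for, with denominator a unit or a small Eisenstein integer) or have the conic shape $(y-ax-b)(y-cx-d)-q$, for which we use the conic chart from Section~\ref{sec:cubic-graphs}. Coverage (Assumption~\ref{ass:cubic-coverage}) then follows from the two elementary checks in that section, after adding the primes dividing the norms of the leading coefficients and of $(c-a)q$ to $S$.

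Next we form $F_i$ and $G_{ij}$ and compute the edge resultants $R_{ij}$. We take $S$ to be $\{2,3\}$ together with all primes dividing $\Norm(c_i)$ and $\Norm(R_{ij})$; for instance, $c_i=\langle 0,-21\rangle$ forces $7\in S$ for \eqref{eq:three-cubic-2}. The cube roots $W_i$ are then reconstructed by the procedure of Section~\ref{sec:cubic-local}: reduce $c_i\prod_j G_{ij}^{\widetilde\rho_{ij}}$ modulo $\widehat F_i$, factor $\widehat F_i$ over $K$, extract cube roots in each residue field, and combine them by the Chinese remainder theorem. Verifying the divisions in \eqref{eq:cubic-vertex} checks Assumptions~\ref{ass:cubic-identities} and~\ref{ass:cubic-primes}, with any primes in the coefficient denominators of $W_i$ also added to $S$. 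The first conclusion of Theorem~\ref{prop:cubic-certificate} then gives $B_p=0$ for every $p\notin S$.

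The main obstacle is the local step, Assumptions~\ref{ass:cubic-local} and~\ref{ass:cubic-exclusion}. For each $p\in S$ we enumerate the solutions of $f\equiv0$ modulo $p$ and lift them using \eqref{eq:cubic-lifting}. For \eqref{eq:three-cubic-2} and \eqref{eq:three-cubic-3} we start at $p=2$ from the classes in \eqref{eq:cubic-preliminary} only; this is exactly where the preliminary quadratic restriction is used. We refine until every undetermined valuation or character coordinate cancels in $B_p$, using \eqref{eq:cubic-rational} at $\lambda$ and the conjugation relation $\chi_{\overline\pi}(\overline u)=-\chi_\pi(u)$ when $p\equiv1\pmod 3$. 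The expected outcome is that each $C_p$ is a singleton, with $\sum_{p\in S}C_p\ne\{0\}$ in $\mathbb F_3$. If some class does not stabilize, I would first lift to higher powers of $p$, and only then allow the full set of coordinates for vertices whose values vanish at the current precision. The volume of this case analysis is why, as stated at the start of this section, we expect to rely on the machine-checked certificates \cite{certificates} rather than writing these tables out in full.
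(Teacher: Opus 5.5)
Your overall strategy is the paper's. You apply Theorem~\ref{prop:cubic-certificate} to the three displayed graphs with $d=4$, $h=2$, you start the $2$-adic cover for \eqref{eq:three-cubic-2} and \eqref{eq:three-cubic-3} from the classes in \eqref{eq:cubic-preliminary}, and you let the local covers produce the contradiction.

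\textbf{The gap: vertex $36$ of \eqref{eq:three-cubic-2}.} Your chart and coverage argument fails here. The polynomial
\[
H_{36}=(y-1)^2+(-x^2-xy-2y+2)\omega
\]
is not linear in $x$, since it contains $-\omega x^2$. Nor is it of the split shape $(y-ax-b)(y-cx-d)-q$ over $K$. Its quadratic part $y^2-\omega xy-\omega x^2$ has discriminant $\omega^2+4\omega=3\omega-1$, whose norm $13$ is not a square, so it does not factor over $K$. Hence neither elementary chart of Section~\ref{sec:cubic-graphs} exists for this vertex. Your verification of Assumptions~\ref{ass:cubic-charts} and~\ref{ass:cubic-coverage} therefore does not go through there.

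The paper instead projects from the point $(-1,1)$ of this conic, putting $t=(y-1)/(x+1)$. This gives the chart
\[
\Delta_{36}=t^2-\omega t-\omega,\qquad X_{36}=-t^2+2\omega t,\qquad Y_{36}=(1+\omega)t^2-2\omega t-\omega,
\]
where $\Delta_{36}$ is irreducible over $K$. Coverage then needs two checks that your plan does not contain:
\begin{itemize}
\item A point of the conic with $x\not\equiv-1$ and $\Delta_{36}(t)\equiv0\pmod\pi$ forces $t\equiv1$ and $1-2\omega\equiv0$. This cannot happen for $p\notin S$, because $\Norm(1-2\omega)=7\in S$.
\item The points with $x\equiv-1$, which the chart misses, are $(-1,1)$ and $(-1,1+\omega)$. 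They are excluded only because $f=-4,-6$ there, so they are not common zeros of $f$ and $H_{36}$ modulo $\pi\nmid 6$. This is a check against $f$, not against $H_{36}$ alone.
\end{itemize}

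\textbf{Smaller corrections to the local step.}
\begin{itemize}
\item The exclusion condition is $0\notin\sum_{p\in S}C_p$, not $\sum_{p\in S}C_p\ne\{0\}$.
\item You should not insist on every $C_p$ being a singleton. For \eqref{eq:three-cubic-1} the paper's cover at $3$ ends with $C_3=\{0,1\}$, and the contradiction comes from $C_2=\{1\}$, giving $\sum_{p\in S}C_p=\{1,2\}$. For the other two equations, $(C_2,C_3)=(\{2\},\{2\})$ and $(\{0\},\{1\})$. In all three cases $C_p=\{0\}$ for $p=5,7,11,13$, with $S=\{2,3,5,7,11,13\}$.
\item Besides \eqref{eq:cubic-rational} and the conjugation relation, you must keep the restriction that integer-valued vertices have unit character $0$ at inert primes such as $2$. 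Otherwise, vertices whose valuation stays undetermined under lifting may enlarge $C_2$ and spoil the exclusion.
\end{itemize}

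These sets are the decisive content of the proof. A plan that only states an ``expected outcome'' still has to produce and certify them.
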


\begin{proof}[Proof outline]
	For each equation use the cubic graph listed under its equation
	number. Set $d=4$, $h=2$, and take the common exceptional set
	\begin{equation}\label{eq:three-cubic-S}
		S=\{2,3,5,7,11,13\}.
	\end{equation}
	We verify the eight assumptions of
	Theorem~\ref{prop:cubic-certificate}.
	
	\emph{Assumption~\ref{ass:cubic-nonzero}: nonzero auxiliary values.}
	For a vertex linear in one coordinate, solve for that coordinate
	and substitute into $f$. The greatest common divisor of the two
	rational coefficient polynomials has no integer root giving an
	integer value of the other coordinate. There are two remaining
	conics. For vertex $36$ of \eqref{eq:three-cubic-2}, an integer
	zero forces $y=1$ and $x\in\{0,-1\}$, where $f=-2,-4$.
	For vertex $46$ of \eqref{eq:three-cubic-3}, its rational component
	forces $y=-1$ or $x=-y$; its $\omega$-coefficient is then $-2$
	or $-y^2-y-2$, respectively, and cannot vanish. Hence no vertex
	vanishes at an integer solution.
	
	\emph{Assumption~\ref{ass:cubic-charts}: parametrizations.}
	Except for these two conics, every vertex has the form
	$H_i=a_i z+b_i(t)$ with $a_i\in K\setminus\{0\}$ and
	$\{z,t\}=\{x,y\}$. Use $z=-b_i(t)/a_i$, with the other
	coordinate equal to $t$, and take $\Delta_i=1$.
	For vertex $36$ of \eqref{eq:three-cubic-2}, use
	\[
	\Delta_{36}=t^2-\omega t-\omega,\qquad
	X_{36}=-t^2+2\omega t,\qquad
	Y_{36}=(1+\omega)t^2-2\omega t-\omega.
	\]
	For vertex $46$ of \eqref{eq:three-cubic-3}, use
	\[
	\Delta_{46}=(1+\omega)t,\qquad
	X_{46}=2\omega-t^2+t,\qquad
	Y_{46}=(t-1)(1+\omega)t.
	\]
	Substitution verifies $H_i\circ\sigma_i=0$ in every case;
	all these charts are nonconstant.
	
	\emph{Assumption~\ref{ass:cubic-identities}: cube identities.}
	Form $F_i,G_{ij}$ by \eqref{eq:cubic-FG}; every $F_i$ is nonzero.
	Recover $W_i$ by the procedure in Section~\ref{sec:cubic-local},
	retaining root choices with denominators supported on $S$, and
	compute $n_i,Q_i$ by \eqref{eq:cubic-vertex}. Exact division gives
	zero remainder at all $21$, $37$ and $47$ vertices, respectively.
	
	\emph{Assumption~\ref{ass:cubic-resultants}: edge resultants.}
	The $54$, $123$ and $180$ edge resultants, respectively, are all
	nonzero. Their Sylvester adjugates give the B\'ezout polynomials
	in \eqref{eq:cubic-bezout}.
	
	\emph{Assumption~\ref{ass:cubic-primes}: exceptional primes.}
	All constant and edge-resultant norms, and all coefficient
	denominators required by Assumption~\ref{ass:cubic-primes},
	are supported on \eqref{eq:three-cubic-S}. This includes the
	reconstructed roots, quotients and B\'ezout polynomials.
	
	\emph{Assumption~\ref{ass:cubic-coverage}: coverage.}
	The coefficients $a_i$ used to solve linear vertices have norms
	supported on $S$, so their charts cover every zero outside $S$.
	For vertex $36$ of \eqref{eq:three-cubic-2}, use
	$t=(y-1)/(x+1)$ when $x\ne-1$.
	If $\Delta_{36}(t)\equiv0\pmod\pi$, the conic equation would
	force $t\equiv1$ and $1-2\omega\equiv0\pmod\pi$. This is
	impossible outside $S$, since $\Norm(1-2\omega)=7$.
	The points with $x=-1$ are $(-1,1)$ and $(-1,1+\omega)$,
	where $f=-4,-6$, so neither is a common zero outside $S$.
	For vertex $46$ of \eqref{eq:three-cubic-3}, put $t=y+1$.
	The conic equation gives
	$t\bigl(y+(1+\omega)x\bigr)=2\omega$, so $t\ne0$ outside $S$
	and its chart covers every zero.
	
	\emph{Assumption~\ref{ass:cubic-local}: local bounds.}
	Apply the lifting procedure \eqref{eq:cubic-lifting} and the
	compatible-coordinate calculations of
	Section~\ref{sec:cubic-local}. At $2$, start with all solution
	classes modulo $2$ for \eqref{eq:three-cubic-1}, and with the
	two classes modulo $8$ in \eqref{eq:cubic-preliminary} for each
	of the other equations. At $3$, start with all solutions modulo
	$3$. Refine a class until the graph contribution is constant
	on its compatible symbol coordinates, retaining the restrictions of Section~\ref{sec:cubic-local} for vertices with integer values (character $0$ at $2$, and \eqref{eq:cubic-rational} at $3$). The complete covers give
	Table~\ref{tab:three-cubic-output}.
	
	\begin{table}[!htbp]
		\centering\small\setlength{\tabcolsep}{5pt}
		\caption{Local computations for the three cubic graphs.
			``Bound'' is the largest modulus used; classes may stop earlier.
			All contributions lie in $\mathbb F_3$.}
		\label{tab:three-cubic-output}
		\begin{tabular}{@{}lrrrccc@{}}
			\toprule
			Equation&Vertices&Edges&$p$&Bound&Terminal classes&$B_p$\\
			\midrule
			\eqref{eq:three-cubic-1}&21&54&2&4&5&1\\
			&&&3&81&30&$0,1$\\\addlinespace
			\eqref{eq:three-cubic-2}&37&123&2&8&2&2\\
			&&&3&27&9&2\\\addlinespace
			\eqref{eq:three-cubic-3}&47&180&2&8&2&0\\
			&&&3&81&30&1\\
			\bottomrule
		\end{tabular}
	\end{table}
	
	At $5,7,11,13$, the same computation gives only $B_p=0$;
	sufficient common precision bounds are $5^2,7^4,11,13^3$,
	respectively. Thus the local-bound sets are
	\[
	\begin{array}{c|cc|c}
		\text{Equation}&C_2&C_3&C_p\ (p=5,7,11,13)\\\hline
		\eqref{eq:three-cubic-1}&\{1\}&\{0,1\}&\{0\}\\
		\eqref{eq:three-cubic-2}&\{2\}&\{2\}&\{0\}\\
		\eqref{eq:three-cubic-3}&\{0\}&\{1\}&\{0\}
	\end{array}
	\]
	The preliminary quadratic restrictions hold for every integer
	solution, so their use at $2$ preserves the required local bounds.
	
	\emph{Assumption~\ref{ass:cubic-exclusion}: incompatibility.}
	For \eqref{eq:three-cubic-1}, \eqref{eq:three-cubic-2} and
	\eqref{eq:three-cubic-3}, respectively, the sets
	$\sum_{p\in S}C_p$ are $\{1,2\}$, $\{1\}$ and $\{1\}$.
	None contains zero. Theorem~\ref{prop:cubic-certificate}
	therefore excludes an integer solution of each equation.
\end{proof}

\subsection{The last equation of size 34}

In this subsection we prove that the equation
\begin{equation}\label{eq:H34last}
	y^3+xy-y=x^4-4
\end{equation}
has no integer solutions. 

For this equation, the certificate of insolubility uses three cubic graphs and two quadratic graphs. 
Their contributions must be computed together on each congruence class.
Allowing each graph to come from a different class would admit combinations
that no integer solution can produce, and would lose the contradiction.

For clarity, we state the additional quadratic formulas here. For
nonzero integers $a,b$, write $a=p^eu$, $b=p^fw$, where $p\nmid uw$.
Define $q_p(a,b)\in\mathbb F_2=\Z/2\Z$ by
\begin{equation}\label{eq:cubic-quadratic-odd}
 (-1)^{q_p(a,b)}
   =\leg{-1}{p}^{ef}\leg{u}{p}^{f}\leg{w}{p}^{e}
       \quad(p\text{ odd}).
\end{equation}
At $p=2$, with $u,w$ odd, set
\begin{equation*}
 q_2(a,b)=\frac{u-1}{2}\frac{w-1}{2}
       +e\frac{w^2-1}{8}+f\frac{u^2-1}{8}\pmod2.
\end{equation*}
All displayed quotients are integers. Finally, put $q_\infty(a,b)=1$
when both $a,b$ are negative, and $0$ otherwise. The subscript
$\infty$ records this real sign test. The quantities
$(-1)^{q_p(a,b)}$ are known as the rational quadratic Hilbert symbols, here
specified by explicit formulas. Their classical product law is
\begin{equation*}
 q_\infty(a,b)+\sum_pq_p(a,b)=0\quad\text{in }\mathbb F_2;
\end{equation*}
see \cite[Chapter~III, Theorems~1 and~3]{serre1973course}.

\paragraph{The joint form of the numbered assumptions.}
Here is the precise extension used for several graphs. Suppose
there are $r\ge0$ cubic graphs and $s\ge0$ quadratic graphs,
with $r+s\ge1$, for the same polynomial $f$. Each cubic graph
is subject to Assumptions~\ref{ass:cubic-nonzero}--\ref{ass:cubic-coverage}
with a common finite set $S$. For each quadratic graph use those
same six assumptions with $H_i\in\Z[x,y]$, $c_i\in\Z\setminus\{0\}$,
$K$ replaced by $\Q$, and weights in $\{0,1\}$. Form $\rho$
over $\mathbb F_2$: $\rho_{ij}=\rho_{ji}=e_{ij}$ for $i<j$,
$\rho_{ii}=0$, and $\widetilde\rho_{ij}\in\{0,1\}$ is its
integer representative. Replace the exponent $3$ in
\eqref{eq:cubic-vertex} by $2$. The degrees $d,h$,
chart identities, resultant identities, coverage and nonvanishing
conditions retain their stated meanings; reduction is now modulo
the ordinary prime $p$. Include $2$ in $S$. For rational coefficients
the norm convention is still $\Norm(a)=a^2$, so the prime-support
condition is unchanged.

For each quadratic graph define its contribution to be
\[
 B_{p}=\sum_{i<j}e_{ij}q_p(H_i,H_j)+\sum_iq_p(H_i,c_i)
       \quad\text{in }\mathbb F_2,
\]
and define $B_\infty$ by the same formula with $q_\infty$.
Let $\mathcal A=\mathbb F_3^r\times\mathbb F_2^s$ and let
$\mathbf B_p\in\mathcal A$ collect the contributions of all graphs
at the same integer pair $(x,y)$, in their fixed order. At infinity
put zero in the cubic coordinates and the sign contributions in
the quadratic coordinates.

\emph{Assumption~\ref{ass:cubic-local}, joint form:} specify sets
$C_p\subseteq\mathcal A$ for $p\in S$ and
$C_\infty\subseteq\{0\}^r\times\mathbb F_2^s$ such that
\begin{equation*}
 \forall(x,y)\in\Z^2,\quad f(x,y)=0\ \Longrightarrow\
 \mathbf B_p(x,y)\in C_p\ (p\in S),\quad
 \mathbf B_\infty(x,y)\in C_\infty.
\end{equation*}
\emph{Assumption~\ref{ass:cubic-exclusion}, joint form:} require
\begin{equation}\label{eq:cubic-joint-exclusion}
 \mathbf0\notin C_\infty+\sum_{p\in S}C_p.
\end{equation}
The set on the right consists of all sums $b_\infty+\sum_{p\in S}b_p$
with $b_\infty\in C_\infty$ and $b_p\in C_p$; addition is componentwise.

Under these assumptions $f=0$ has no integer solutions. Indeed,
Theorem~\ref{prop:cubic-certificate} gives zero outside $S$ in
the cubic coordinates. Its proof applies to the quadratic coordinates
as well: adjacent values cannot both be divisible by $p\notin S$,
and the square identity makes the product of the neighbouring
values and its constant a nonzero square modulo $p$. Formula
\eqref{eq:cubic-quadratic-odd} then gives zero. Summing the
reciprocity laws would put $\mathbf0$ in the set excluded by
\eqref{eq:cubic-joint-exclusion}. Thus the first six assumptions
are checked graph by graph, and the last two are checked jointly.

\paragraph{The graphs for \eqref{eq:H34last}.}\label{data:h34-last} $f=y^{3}+xy-y-x^{4}+4$.
\par\smallskip\noindent\textit{The cubic graphs $G_0$, $G_1$ and $G_2$: $K=\Q(\omega)$, $n=3$, $m=98$; the auxiliary polynomials and the constants are given in Table~\ref{tab:O1-vertices}, and the edge lists below. A vertex without edges and with constant $\langle1,0\rangle$ does not contribute to the graph.}
\begingroup\footnotesize\setlength{\tabcolsep}{4pt}\setlength{\LTpre}{3pt}\setlength{\LTpost}{6pt}\renewcommand{\arraystretch}{1.05}
\begin{longtable}{@{}rllll@{}}
\caption{The auxiliary polynomials of the cubic graphs $G_0$, $G_1$ and $G_2$ for \eqref{eq:H34last}, and their constants in the three graphs. Here $\langle A,B\rangle=A+B\omega$.}\label{tab:O1-vertices}\\
\toprule $i$ & $H_i$ & $c_i$ in $G_0$ & $c_i$ in $G_1$ & $c_i$ in $G_2$\\\midrule\endfirsthead
\multicolumn{5}{l}{\textit{Table \thetable\ continued}}\\\toprule $i$ & $H_i$ & $c_i$ in $G_0$ & $c_i$ in $G_1$ & $c_i$ in $G_2$\\\midrule\endhead\bottomrule\endfoot
0 & $\langle x - 1,0\rangle$ & $\langle 1,0\rangle$ & $\langle 1,0\rangle$ & $\langle 1,0\rangle$\\
1 & $\langle x - 2,0\rangle$ & $\langle 392,147\rangle$ & $\langle 21,14\rangle$ & $\langle 14,-7\rangle$\\
2 & $\langle x + 1,0\rangle$ & $\langle 1,0\rangle$ & $\langle 1,0\rangle$ & $\langle 1,0\rangle$\\
3 & $\langle - x + y + 2,0\rangle$ & $\langle 57,-38\rangle$ & $\langle 1,0\rangle$ & $\langle -1,-1\rangle$\\
4 & $\langle x + 2,0\rangle$ & $\langle 0,-1\rangle$ & $\langle -7,-21\rangle$ & $\langle 21,14\rangle$\\
5 & $\langle y,0\rangle$ & $\langle 0,-1\rangle$ & $\langle 1,0\rangle$ & $\langle 1,0\rangle$\\
6 & $\langle x,y + 1\rangle$ & $\langle 2,3\rangle$ & $\langle 2,0\rangle$ & $\langle 6,16\rangle$\\
7 & $\langle - x + y - 1,- 2 x - 1\rangle$ & $\langle 2,-1\rangle$ & $\langle 1,0\rangle$ & $\langle 1,0\rangle$\\
8 & $\langle - 2 x + y,1 - x\rangle$ & $\langle 1,5\rangle$ & $\langle 3,1\rangle$ & $\langle 12,0\rangle$\\
9 & $\langle - x + y,1 - x\rangle$ & $\langle 1,0\rangle$ & $\langle 2,13\rangle$ & $\langle 2,-1\rangle$\\
10 & $\langle 2 x + y,x + 1\rangle$ & $\langle 28,35\rangle$ & $\langle 3,-6\rangle$ & $\langle 4,26\rangle$\\
11 & $\langle x + y - 1,x - 1\rangle$ & $\langle 15,24\rangle$ & $\langle 2,0\rangle$ & $\langle 12,36\rangle$\\
12 & $\langle y,1\rangle$ & $\langle 4,0\rangle$ & $\langle 1,1\rangle$ & $\langle 0,3\rangle$\\
13 & $\langle x - 1,-1\rangle$ & $\langle 0,3\rangle$ & $\langle 3,1\rangle$ & $\langle 3,1\rangle$\\
14 & $\langle - 2 x + y - 1,- x - 2\rangle$ & $\langle 3,3\rangle$ & $\langle 1,1\rangle$ & $\langle 2,1\rangle$\\
15 & $\langle x + y - 1,- x - 2\rangle$ & $\langle 2,0\rangle$ & $\langle 100,-100\rangle$ & $\langle 1,0\rangle$\\
16 & $\langle y - 1,-2\rangle$ & $\langle 4,0\rangle$ & $\langle 2,0\rangle$ & $\langle 3,3\rangle$\\
17 & $\langle y + 1,2\rangle$ & $\langle 16,0\rangle$ & $\langle 4,0\rangle$ & $\langle -9,-9\rangle$\\
18 & $\langle x - 1,-2\rangle$ & $\langle 28,-14\rangle$ & $\langle 20,-80\rangle$ & $\langle 1,3\rangle$\\
19 & $\langle x + 1,2\rangle$ & $\langle 1568,980\rangle$ & $\langle 3600,9600\rangle$ & $\langle -5,3\rangle$\\
20 & $\langle x + 2,3\rangle$ & $\langle 0,3\rangle$ & $\langle 3,3\rangle$ & $\langle 1,-1\rangle$\\
21 & $\langle x - y - 2,-1\rangle$ & $\langle 12,8\rangle$ & $\langle 0,2\rangle$ & $\langle 32,12\rangle$\\
22 & $\langle - x + y - 1,x + 1\rangle$ & $\langle 3,9\rangle$ & $\langle 50,50\rangle$ & $\langle 10,2\rangle$\\
23 & $\langle x + y - 1,1\rangle$ & $\langle 3,1\rangle$ & $\langle 96,36\rangle$ & $\langle 5,8\rangle$\\
24 & $\langle x + y - 1,x + 1\rangle$ & $\langle 16,10\rangle$ & $\langle 210,140\rangle$ & $\langle 7,0\rangle$\\
25 & $\langle x - 2,-1\rangle$ & $\langle 31,30\rangle$ & $\langle 3,-5\rangle$ & $\langle 8,3\rangle$\\
26 & $\langle y - 2,- x - 2\rangle$ & $\langle 6,-12\rangle$ & $\langle 49,-49\rangle$ & $\langle 2,-4\rangle$\\
27 & $\langle x + y,x + 2\rangle$ & $\langle 180,288\rangle$ & $\langle 7203,7203\rangle$ & $\langle 20,32\rangle$\\
28 & $\langle 2 x + y,2 x + 2\rangle$ & $\langle 10,4\rangle$ & $\langle 1,0\rangle$ & $\langle 1,0\rangle$\\
29 & $\langle y - 2,-2\rangle$ & $\langle 1,0\rangle$ & $\langle 4,0\rangle$ & $\langle 4,4\rangle$\\
30 & $\langle - x + 3 y - 6,x - 6\rangle$ & $\langle 15,9\rangle$ & $\langle 940,-120\rangle$ & $\langle 1,0\rangle$\\
31 & $\langle - 2 x + 3 y,6 - x\rangle$ & $\langle -45,-189\rangle$ & $\langle 1109200,240000\rangle$ & $\langle 1,0\rangle$\\
32 & $\langle x - y + 1,1\rangle$ & $\langle 1,0\rangle$ & $\langle 2,-1\rangle$ & $\langle 1,-1\rangle$\\
33 & $\langle x + y + 1,1\rangle$ & $\langle 9,6\rangle$ & $\langle 25,-25\rangle$ & $\langle 1,2\rangle$\\
34 & $\langle x + y,-1\rangle$ & $\langle -27,-72\rangle$ & $\langle 1875,1875\rangle$ & $\langle -3,0\rangle$\\
35 & $\langle - x + y + 1,1 - 2 x\rangle$ & $\langle 4,0\rangle$ & $\langle 1,1\rangle$ & $\langle 1,0\rangle$\\
36 & $\langle x,-1\rangle$ & $\langle 9,-15\rangle$ & $\langle 2,-11\rangle$ & $\langle 7,-28\rangle$\\
37 & $\langle x + 1,1\rangle$ & $\langle 351,495\rangle$ & $\langle 48,165\rangle$ & $\langle 441,1176\rangle$\\
38 & $\langle - x + y + 1,- x - 1\rangle$ & $\langle 4,8\rangle$ & $\langle 5,-3\rangle$ & $\langle 2,3\rangle$\\
39 & $\langle x + y + 2,2 x + 1\rangle$ & $\langle 22,26\rangle$ & $\langle 50,0\rangle$ & $\langle 4,6\rangle$\\
40 & $\langle - x + y + 1,- 2 x - 1\rangle$ & $\langle -660,-468\rangle$ & $\langle 2500,0\rangle$ & $\langle -32,-12\rangle$\\
41 & $\langle y + 1,x - 2\rangle$ & $\langle 8,-4\rangle$ & $\langle 700,-350\rangle$ & $\langle 15,-9\rangle$\\
42 & $\langle - x + y + 3,2 - x\rangle$ & $\langle 128,80\rangle$ & $\langle 980000,612500\rangle$ & $\langle 495,351\rangle$\\
43 & $\langle x + y + 1,y\rangle$ & $\langle 1,0\rangle$ & $\langle 1,0\rangle$ & $\langle 1,0\rangle$\\
44 & $\langle - 2 x + y + 1,- x\rangle$ & $\langle 6,16\rangle$ & $\langle 350,525\rangle$ & $\langle 2,13\rangle$\\
45 & $\langle 2 x + y + 2,2 x - 1\rangle$ & $\langle 96,-30\rangle$ & $\langle 925,275\rangle$ & $\langle 1,0\rangle$\\
46 & $\langle x + 2,-1\rangle$ & $\langle 1,-2\rangle$ & $\langle 1,0\rangle$ & $\langle 1,0\rangle$\\
47 & $\langle x - y,- y\rangle$ & $\langle 16,10\rangle$ & $\langle 2,2\rangle$ & $\langle 2,-4\rangle$\\
48 & $\langle x^{2} + y - 3,x^{2} - 2 x\rangle$ & $\langle 2,-3\rangle$ & $\langle 1,0\rangle$ & $\langle 1,0\rangle$\\
49 & $\langle - x^{2} - 3 x + y - 6,-2\rangle$ & $\langle 21,16\rangle$ & $\langle 1,0\rangle$ & $\langle 1,0\rangle$\\
50 & $\langle - x^{2} - x + y + 1,- x^{2} - 2 x\rangle$ & $\langle 20,-64\rangle$ & $\langle 1,0\rangle$ & $\langle 1,1\rangle$\\
51 & $\langle x + y + 1,x^{2} + 2 x\rangle$ & $\langle 2960,6656\rangle$ & $\langle 1,0\rangle$ & $\langle -1,-1\rangle$\\
52 & $\langle - x + y + 3,x^{2} - 2\rangle$ & $\langle 64,-20\rangle$ & $\langle 1,1\rangle$ & $\langle 1,0\rangle$\\
53 & $\langle - x^{2} - x + y + 5,2 - x^{2}\rangle$ & $\langle 6656,2960\rangle$ & $\langle -1,-1\rangle$ & $\langle 1,0\rangle$\\
54 & $\langle - 2 x^{2} - 2 x + y,- x^{2} - 4 x\rangle$ & $\langle 48,-15\rangle$ & $\langle 1,0\rangle$ & $\langle 1,0\rangle$\\
55 & $\langle 2 x - y + 1,0\rangle$ & $\langle 14,-7\rangle$ & $\langle 392,147\rangle$ & $\langle -245,-392\rangle$\\
56 & $\langle x^{2} + y - 2,0\rangle$ & $\langle -7,-21\rangle$ & $\langle 0,-1\rangle$ & $\langle -7,-21\rangle$\\
57 & $\langle y + 2,x\rangle$ & $\langle 0,3\rangle$ & $\langle 10,-6\rangle$ & $\langle 1,-1\rangle$\\
58 & $\langle - x + y + 1,- x^{2}\rangle$ & $\langle 1,1\rangle$ & $\langle 1,0\rangle$ & $\langle 1,0\rangle$\\
59 & $\langle x^{2} - 2,- y\rangle$ & $\langle 2,3\rangle$ & $\langle 4,4\rangle$ & $\langle 0,1\rangle$\\
60 & $\langle x^{2} - y + 2,- y\rangle$ & $\langle 0,12\rangle$ & $\langle 12,0\rangle$ & $\langle 0,1\rangle$\\
61 & $\langle x^{2} + 2,y\rangle$ & $\langle 0,144\rangle$ & $\langle 144,0\rangle$ & $\langle 0,1\rangle$\\
62 & $\langle x + y,y + 1\rangle$ & $\langle 1,0\rangle$ & $\langle 2,-1\rangle$ & $\langle 11,-2\rangle$\\
63 & $\langle y,1 - x\rangle$ & $\langle -495,-144\rangle$ & $\langle 4,0\rangle$ & $\langle -720,432\rangle$\\
64 & $\langle x + y,- x\rangle$ & $\langle 3,1\rangle$ & $\langle 3,0\rangle$ & $\langle 0,4\rangle$\\
65 & $\langle x - y - 1,- y - 1\rangle$ & $\langle -8,-3\rangle$ & $\langle 4,0\rangle$ & $\langle -156,64\rangle$\\
66 & $\langle x + y,2 x + 1\rangle$ & $\langle 8,5\rangle$ & $\langle 1,0\rangle$ & $\langle 1,0\rangle$\\
67 & $\langle - x + y - 1,x - 1\rangle$ & $\langle -9,15\rangle$ & $\langle 3,-5\rangle$ & $\langle 144,0\rangle$\\
68 & $\langle y - 1,x - 1\rangle$ & $\langle 1,0\rangle$ & $\langle -48,117\rangle$ & $\langle 8,5\rangle$\\
69 & $\langle x + y - 1,- x - 1\rangle$ & $\langle -1176,-735\rangle$ & $\langle 45,72\rangle$ & $\langle -192,468\rangle$\\
70 & $\langle y - 1,-1\rangle$ & $\langle 16,0\rangle$ & $\langle -1,-1\rangle$ & $\langle 0,9\rangle$\\
71 & $\langle x,1\rangle$ & $\langle 0,9\rangle$ & $\langle 3,-5\rangle$ & $\langle 3,-5\rangle$\\
72 & $\langle - x + y + 1,x + 2\rangle$ & $\langle -9,-9\rangle$ & $\langle -1,-1\rangle$ & $\langle 0,-3\rangle$\\
73 & $\langle 2 x + y + 1,x + 2\rangle$ & $\langle 4,0\rangle$ & $\langle 30000,30000\rangle$ & $\langle 1,0\rangle$\\
74 & $\langle x - 1,-3\rangle$ & $\langle 0,9\rangle$ & $\langle -9,-9\rangle$ & $\langle 3,3\rangle$\\
75 & $\langle x - y - 1,1\rangle$ & $\langle -48,-128\rangle$ & $\langle 0,4\rangle$ & $\langle 256,-624\rangle$\\
76 & $\langle - 2 x + y - 2,- x - 1\rangle$ & $\langle -45,27\rangle$ & $\langle -2500,-2500\rangle$ & $\langle 60,-36\rangle$\\
77 & $\langle x + y - 2,-1\rangle$ & $\langle 3,-5\rangle$ & $\langle 2304,-5616\rangle$ & $\langle -55,-16\rangle$\\
78 & $\langle y - 2,- x - 1\rangle$ & $\langle -64,-220\rangle$ & $\langle -14700,-39200\rangle$ & $\langle 49,0\rangle$\\
79 & $\langle x - 1,1\rangle$ & $\langle -899,-960\rangle$ & $\langle 39,55\rangle$ & $\langle 16,-39\rangle$\\
80 & $\langle y - 2,- 2 x - 2\rangle$ & $\langle 20,-64\rangle$ & $\langle 1,0\rangle$ & $\langle 1,0\rangle$\\
81 & $\langle y,2\rangle$ & $\langle 1,0\rangle$ & $\langle 16,0\rangle$ & $\langle -16,-16\rangle$\\
82 & $\langle x - y,-1\rangle$ & $\langle 1,0\rangle$ & $\langle 8,5\rangle$ & $\langle 3,3\rangle$\\
83 & $\langle x + y,2 x - 1\rangle$ & $\langle 16,0\rangle$ & $\langle -1,-1\rangle$ & $\langle 1,0\rangle$\\
84 & $\langle y + 2,x + 1\rangle$ & $\langle -48,0\rangle$ & $\langle 55,39\rangle$ & $\langle -8,-3\rangle$\\
85 & $\langle x + 1,- y\rangle$ & $\langle 1,0\rangle$ & $\langle 1,0\rangle$ & $\langle 1,0\rangle$\\
86 & $\langle - x + y + 1,x\rangle$ & $\langle -156,64\rangle$ & $\langle -245000,-91875\rangle$ & $\langle -48,117\rangle$\\
87 & $\langle y + 3,1 - 2 x\rangle$ & $\langle 14976,6660\rangle$ & $\langle 346875,-433125\rangle$ & $\langle 1,0\rangle$\\
88 & $\langle x + 3,1\rangle$ & $\langle 5,8\rangle$ & $\langle 1,0\rangle$ & $\langle 1,0\rangle$\\
89 & $\langle x,y\rangle$ & $\langle -64,-220\rangle$ & $\langle -4,-4\rangle$ & $\langle 20,32\rangle$\\
90 & $\langle 2 x + y - 3,- x^{2} + 2 x\rangle$ & $\langle 16,21\rangle$ & $\langle 1,0\rangle$ & $\langle 1,0\rangle$\\
91 & $\langle - x^{2} - 3 x + y - 4,2\rangle$ & $\langle -231,-416\rangle$ & $\langle 1,0\rangle$ & $\langle 1,0\rangle$\\
92 & $\langle - x^{2} + 2 x + y,x^{2} + 4 x\rangle$ & $\langle 3744,1665\rangle$ & $\langle 1,0\rangle$ & $\langle 1,0\rangle$\\
93 & $\langle - x + y + 2,- x\rangle$ & $\langle 0,9\rangle$ & $\langle 220,156\rangle$ & $\langle 3,3\rangle$\\
94 & $\langle x^{2} - x + y + 1,x^{2}\rangle$ & $\langle -1,-1\rangle$ & $\langle 1,0\rangle$ & $\langle 1,0\rangle$\\
95 & $\langle x^{2} + y - 2,y\rangle$ & $\langle -8,-3\rangle$ & $\langle -16,-16\rangle$ & $\langle 0,1\rangle$\\
96 & $\langle x - 1,- y - 1\rangle$ & $\langle 1,0\rangle$ & $\langle 8,5\rangle$ & $\langle 165,48\rangle$\\
97 & $\langle 2 x + y,x\rangle$ & $\langle 3,-5\rangle$ & $\langle 9,0\rangle$ & $\langle 0,16\rangle$\\
\end{longtable}
\endgroup
\par\smallskip\noindent\textit{Weights of $G_0$: $766$ edges.}\par\begingroup\footnotesize\noindent
\edgerow{0}{6^{2}\;\allowbreak 9^{2}\;\allowbreak 16^{1}\;\allowbreak 17^{2}\;\allowbreak 32^{1}\;\allowbreak 33^{1}\;\allowbreak 34^{2}\;\allowbreak 44^{1}\;\allowbreak 58^{1}\;\allowbreak 64^{1}\;\allowbreak 65^{1}\;\allowbreak 68^{1}\;\allowbreak 82^{2}\;\allowbreak 86^{2}\;\allowbreak 94^{2}\;\allowbreak 97^{2}}
\edgerow{1}{9^{1}\;\allowbreak 11^{2}\;\allowbreak 14^{2}\;\allowbreak 23^{2}\;\allowbreak 29^{2}\;\allowbreak 30^{2}\;\allowbreak 31^{1}\;\allowbreak 39^{2}\;\allowbreak 40^{1}\;\allowbreak 43^{1}\;\allowbreak 44^{1}\;\allowbreak 48^{2}\;\allowbreak 57^{2}\;\allowbreak 59^{2}\;\allowbreak 63^{1}\;\allowbreak 64^{1}\;\allowbreak 68^{2}\;\allowbreak 72^{1}\;\allowbreak 77^{1}\;\allowbreak 81^{1}\;\allowbreak 85^{2}\;\allowbreak 86^{2}\;\allowbreak 90^{1}\;\allowbreak 93^{1}\;\allowbreak 95^{1}\;\allowbreak 97^{2}}
\edgerow{2}{23^{2}\;\allowbreak 26^{1}\;\allowbreak 27^{2}\;\allowbreak 28^{2}\;\allowbreak 33^{2}\;\allowbreak 34^{1}\;\allowbreak 38^{1}\;\allowbreak 47^{1}\;\allowbreak 54^{1}\;\allowbreak 59^{2}\;\allowbreak 60^{1}\;\allowbreak 61^{2}\;\allowbreak 62^{1}\;\allowbreak 77^{1}\;\allowbreak 80^{1}\;\allowbreak 84^{2}\;\allowbreak 89^{2}\;\allowbreak 92^{2}\;\allowbreak 95^{1}\;\allowbreak 96^{2}}
\edgerow{3}{6^{1}\;\allowbreak 8^{1}\;\allowbreak 18^{1}\;\allowbreak 19^{2}\;\allowbreak 20^{2}\;\allowbreak 22^{1}\;\allowbreak 29^{1}\;\allowbreak 36^{2}\;\allowbreak 37^{1}\;\allowbreak 41^{1}\;\allowbreak 42^{2}\;\allowbreak 43^{1}\;\allowbreak 52^{2}\;\allowbreak 53^{1}\;\allowbreak 54^{2}\;\allowbreak 57^{1}\;\allowbreak 60^{2}\;\allowbreak 61^{1}\;\allowbreak 62^{1}\;\allowbreak 65^{2}\;\allowbreak 67^{2}\;\allowbreak 74^{1}\;\allowbreak 76^{2}\;\allowbreak 81^{2}\;\allowbreak 85^{2}\;\allowbreak 92^{1}\;\allowbreak 93^{2}\;\allowbreak 96^{2}}
\edgerow{4}{14^{2}\;\allowbreak 15^{1}\;\allowbreak 16^{2}\;\allowbreak 17^{1}\;\allowbreak 22^{1}\;\allowbreak 30^{2}\;\allowbreak 31^{1}\;\allowbreak 41^{1}\;\allowbreak 42^{2}\;\allowbreak 49^{1}\;\allowbreak 50^{2}\;\allowbreak 51^{1}\;\allowbreak 64^{1}\;\allowbreak 72^{1}\;\allowbreak 73^{2}\;\allowbreak 76^{2}\;\allowbreak 91^{2}\;\allowbreak 97^{2}}
\edgerow{5}{9^{2}\;\allowbreak 10^{1}\;\allowbreak 11^{1}\;\allowbreak 23^{1}\;\allowbreak 26^{1}\;\allowbreak 27^{2}\;\allowbreak 43^{1}\;\allowbreak 52^{2}\;\allowbreak 53^{1}\;\allowbreak 54^{2}\;\allowbreak 63^{2}\;\allowbreak 68^{1}\;\allowbreak 69^{2}\;\allowbreak 77^{2}\;\allowbreak 85^{2}\;\allowbreak 92^{1}}
\edgerow{6}{7^{2}\;\allowbreak 15^{1}\;\allowbreak 17^{1}\;\allowbreak 19^{1}\;\allowbreak 20^{1}\;\allowbreak 40^{2}\;\allowbreak 44^{1}\;\allowbreak 46^{1}\;\allowbreak 48^{2}\;\allowbreak 51^{2}\;\allowbreak 53^{1}\;\allowbreak 55^{1}\;\allowbreak 57^{1}\;\allowbreak 58^{2}\;\allowbreak 59^{1}\;\allowbreak 62^{1}}
\edgerow{7}{8^{1}\;\allowbreak 16^{2}\;\allowbreak 19^{2}\;\allowbreak 30^{2}\;\allowbreak 41^{2}\;\allowbreak 43^{1}\;\allowbreak 50^{1}\;\allowbreak 54^{2}}
\edgerow{8}{13^{1}\;\allowbreak 16^{1}\;\allowbreak 18^{2}\;\allowbreak 19^{1}\;\allowbreak 21^{2}\;\allowbreak 26^{1}\;\allowbreak 29^{2}\;\allowbreak 32^{1}\;\allowbreak 35^{2}\;\allowbreak 42^{2}\;\allowbreak 49^{2}\;\allowbreak 53^{2}}
\edgerow{9}{13^{1}\;\allowbreak 14^{1}\;\allowbreak 16^{2}\;\allowbreak 18^{1}\;\allowbreak 21^{2}\;\allowbreak 24^{1}\;\allowbreak 27^{2}\;\allowbreak 34^{1}\;\allowbreak 36^{2}\;\allowbreak 37^{2}\;\allowbreak 40^{1}\;\allowbreak 44^{2}\;\allowbreak 47^{2}\;\allowbreak 51^{1}\;\allowbreak 53^{1}\;\allowbreak 55^{1}\;\allowbreak 57^{2}\;\allowbreak 59^{2}\;\allowbreak 61^{2}\;\allowbreak 63^{1}}
\edgerow{10}{13^{2}\;\allowbreak 19^{2}\;\allowbreak 20^{1}\;\allowbreak 24^{1}\;\allowbreak 25^{2}\;\allowbreak 30^{1}\;\allowbreak 31^{1}\;\allowbreak 33^{1}\;\allowbreak 34^{2}\;\allowbreak 36^{1}\;\allowbreak 61^{2}\;\allowbreak 64^{1}}
\edgerow{11}{15^{2}\;\allowbreak 18^{1}\;\allowbreak 32^{2}\;\allowbreak 39^{2}\;\allowbreak 60^{1}\;\allowbreak 68^{1}\;\allowbreak 72^{2}\;\allowbreak 75^{1}\;\allowbreak 77^{2}\;\allowbreak 81^{2}\;\allowbreak 82^{2}\;\allowbreak 84^{2}\;\allowbreak 95^{1}}
\edgerow{12}{13^{1}\;\allowbreak 25^{2}\;\allowbreak 32^{1}\;\allowbreak 36^{2}\;\allowbreak 38^{1}\;\allowbreak 40^{2}\;\allowbreak 42^{1}\;\allowbreak 45^{1}\;\allowbreak 47^{2}\;\allowbreak 50^{1}\;\allowbreak 56^{1}\;\allowbreak 58^{1}\;\allowbreak 60^{2}\;\allowbreak 61^{2}\;\allowbreak 62^{2}}
\edgerow{13}{21^{1}\;\allowbreak 23^{2}\;\allowbreak 31^{1}\;\allowbreak 35^{2}\;\allowbreak 39^{2}\;\allowbreak 40^{1}\;\allowbreak 47^{1}\;\allowbreak 59^{1}\;\allowbreak 60^{1}\;\allowbreak 61^{1}\;\allowbreak 62^{2}\;\allowbreak 64^{2}}
\edgerow{14}{26^{2}\;\allowbreak 36^{2}\;\allowbreak 41^{2}\;\allowbreak 50^{2}\;\allowbreak 59^{1}\;\allowbreak 60^{1}\;\allowbreak 61^{2}\;\allowbreak 63^{2}}
\edgerow{15}{16^{1}\;\allowbreak 17^{2}\;\allowbreak 18^{2}\;\allowbreak 22^{1}\;\allowbreak 27^{1}\;\allowbreak 33^{1}\;\allowbreak 55^{2}\;\allowbreak 59^{2}\;\allowbreak 61^{2}}
\edgerow{16}{18^{2}\;\allowbreak 19^{2}\;\allowbreak 22^{2}\;\allowbreak 24^{2}\;\allowbreak 33^{1}\;\allowbreak 40^{2}\;\allowbreak 41^{2}\;\allowbreak 42^{2}\;\allowbreak 47^{2}\;\allowbreak 51^{2}\;\allowbreak 53^{2}\;\allowbreak 56^{1}\;\allowbreak 58^{2}\;\allowbreak 61^{2}\;\allowbreak 64^{2}\;\allowbreak 65^{1}\;\allowbreak 73^{2}\;\allowbreak 75^{2}\;\allowbreak 77^{1}\;\allowbreak 95^{1}\;\allowbreak 97^{1}}
\edgerow{17}{18^{1}\;\allowbreak 19^{1}\;\allowbreak 21^{1}\;\allowbreak 23^{2}\;\allowbreak 34^{2}\;\allowbreak 39^{1}\;\allowbreak 41^{1}\;\allowbreak 42^{1}\;\allowbreak 50^{1}\;\allowbreak 52^{1}\;\allowbreak 56^{2}\;\allowbreak 59^{2}\;\allowbreak 60^{1}\;\allowbreak 64^{2}\;\allowbreak 66^{2}\;\allowbreak 67^{1}\;\allowbreak 68^{2}\;\allowbreak 73^{1}\;\allowbreak 76^{1}\;\allowbreak 78^{1}\;\allowbreak 89^{1}\;\allowbreak 94^{1}\;\allowbreak 97^{1}}
\edgerow{18}{21^{1}\;\allowbreak 23^{2}\;\allowbreak 24^{1}\;\allowbreak 30^{1}\;\allowbreak 31^{2}\;\allowbreak 33^{1}\;\allowbreak 35^{1}\;\allowbreak 38^{2}\;\allowbreak 43^{2}\;\allowbreak 47^{2}\;\allowbreak 51^{1}\;\allowbreak 58^{2}\;\allowbreak 62^{1}\;\allowbreak 64^{2}\;\allowbreak 65^{1}\;\allowbreak 66^{2}\;\allowbreak 67^{1}\;\allowbreak 69^{2}\;\allowbreak 77^{2}\;\allowbreak 81^{2}\;\allowbreak 83^{2}\;\allowbreak 89^{2}\;\allowbreak 91^{2}\;\allowbreak 96^{1}\;\allowbreak 97^{2}}
\edgerow{19}{23^{1}\;\allowbreak 29^{1}\;\allowbreak 30^{1}\;\allowbreak 31^{2}\;\allowbreak 34^{2}\;\allowbreak 35^{1}\;\allowbreak 47^{1}\;\allowbreak 49^{1}\;\allowbreak 50^{2}\;\allowbreak 62^{2}\;\allowbreak 63^{1}\;\allowbreak 64^{1}\;\allowbreak 67^{2}\;\allowbreak 68^{1}\;\allowbreak 73^{2}\;\allowbreak 75^{2}\;\allowbreak 77^{1}\;\allowbreak 78^{2}\;\allowbreak 83^{2}\;\allowbreak 84^{1}\;\allowbreak 85^{1}\;\allowbreak 89^{1}\;\allowbreak 94^{1}\;\allowbreak 96^{2}\;\allowbreak 97^{1}}
\edgerow{20}{22^{1}\;\allowbreak 30^{1}\;\allowbreak 32^{2}\;\allowbreak 34^{2}\;\allowbreak 43^{2}\;\allowbreak 60^{1}\;\allowbreak 62^{1}}
\edgerow{21}{25^{2}\;\allowbreak 34^{2}\;\allowbreak 36^{2}\;\allowbreak 37^{1}\;\allowbreak 43^{2}\;\allowbreak 44^{1}\;\allowbreak 53^{2}\;\allowbreak 60^{2}\;\allowbreak 63^{1}}
\edgerow{22}{27^{2}\;\allowbreak 31^{1}\;\allowbreak 32^{2}\;\allowbreak 34^{1}\;\allowbreak 40^{1}\;\allowbreak 51^{1}\;\allowbreak 55^{2}\;\allowbreak 57^{1}\;\allowbreak 60^{1}\;\allowbreak 61^{1}\;\allowbreak 62^{2}\;\allowbreak 64^{1}}
\edgerow{23}{25^{1}\;\allowbreak 35^{1}\;\allowbreak 37^{1}\;\allowbreak 40^{2}\;\allowbreak 46^{1}\;\allowbreak 49^{2}\;\allowbreak 51^{2}\;\allowbreak 52^{2}\;\allowbreak 56^{2}\;\allowbreak 59^{2}\;\allowbreak 60^{2}\;\allowbreak 61^{1}\;\allowbreak 63^{2}}
\edgerow{24}{37^{2}\;\allowbreak 38^{2}\;\allowbreak 51^{1}\;\allowbreak 56^{1}\;\allowbreak 60^{1}}
\edgerow{25}{34^{2}\;\allowbreak 39^{1}\;\allowbreak 40^{2}\;\allowbreak 41^{1}\;\allowbreak 43^{2}\;\allowbreak 47^{2}\;\allowbreak 48^{2}\;\allowbreak 55^{1}\;\allowbreak 57^{1}\;\allowbreak 58^{1}\;\allowbreak 60^{2}\;\allowbreak 61^{2}\;\allowbreak 62^{2}}
\edgerow{26}{28^{1}\;\allowbreak 32^{1}\;\allowbreak 38^{2}\;\allowbreak 40^{2}\;\allowbreak 55^{2}\;\allowbreak 57^{1}\;\allowbreak 60^{2}\;\allowbreak 68^{2}\;\allowbreak 73^{1}\;\allowbreak 76^{2}\;\allowbreak 88^{1}\;\allowbreak 91^{2}\;\allowbreak 96^{1}\;\allowbreak 97^{2}}
\edgerow{27}{39^{1}\;\allowbreak 46^{2}\;\allowbreak 49^{1}\;\allowbreak 55^{1}\;\allowbreak 61^{1}\;\allowbreak 62^{2}\;\allowbreak 64^{1}\;\allowbreak 67^{1}\;\allowbreak 72^{2}\;\allowbreak 80^{2}\;\allowbreak 82^{2}\;\allowbreak 84^{1}\;\allowbreak 93^{2}}
\edgerow{28}{47^{2}\;\allowbreak 54^{2}}
\edgerow{29}{34^{1}\;\allowbreak 36^{1}\;\allowbreak 37^{2}\;\allowbreak 48^{1}\;\allowbreak 49^{2}\;\allowbreak 56^{1}\;\allowbreak 59^{2}\;\allowbreak 61^{2}\;\allowbreak 63^{2}}
\edgerow{30}{39^{1}\;\allowbreak 40^{2}\;\allowbreak 41^{1}\;\allowbreak 43^{1}\;\allowbreak 53^{1}\;\allowbreak 55^{2}\;\allowbreak 56^{1}\;\allowbreak 61^{1}\;\allowbreak 69^{1}\;\allowbreak 71^{1}\;\allowbreak 76^{1}\;\allowbreak 85^{2}\;\allowbreak 87^{1}\;\allowbreak 89^{2}}
\edgerow{31}{39^{1}\;\allowbreak 40^{2}\;\allowbreak 42^{2}\;\allowbreak 43^{1}\;\allowbreak 45^{2}\;\allowbreak 47^{1}\;\allowbreak 52^{2}\;\allowbreak 55^{1}\;\allowbreak 56^{2}\;\allowbreak 60^{2}\;\allowbreak 66^{2}\;\allowbreak 69^{1}\;\allowbreak 74^{1}\;\allowbreak 85^{2}}
\edgerow{32}{35^{1}\;\allowbreak 46^{2}\;\allowbreak 51^{2}\;\allowbreak 60^{1}\;\allowbreak 61^{2}\;\allowbreak 62^{1}\;\allowbreak 63^{2}}
\edgerow{33}{38^{1}\;\allowbreak 39^{1}\;\allowbreak 47^{1}\;\allowbreak 50^{2}\;\allowbreak 55^{1}\;\allowbreak 56^{1}\;\allowbreak 58^{2}\;\allowbreak 59^{1}\;\allowbreak 68^{1}\;\allowbreak 69^{2}\;\allowbreak 74^{2}\;\allowbreak 75^{2}\;\allowbreak 76^{1}\;\allowbreak 79^{2}\;\allowbreak 81^{1}}
\edgerow{34}{40^{2}\;\allowbreak 51^{1}\;\allowbreak 55^{2}\;\allowbreak 56^{2}\;\allowbreak 69^{1}\;\allowbreak 73^{1}\;\allowbreak 84^{2}\;\allowbreak 89^{2}\;\allowbreak 94^{1}\;\allowbreak 95^{2}}
\edgerow{35}{37^{2}\;\allowbreak 46^{1}\;\allowbreak 47^{1}\;\allowbreak 48^{1}\;\allowbreak 56^{2}}
\edgerow{36}{48^{2}\;\allowbreak 49^{1}\;\allowbreak 57^{2}\;\allowbreak 60^{1}\;\allowbreak 64^{2}\;\allowbreak 68^{2}\;\allowbreak 75^{1}\;\allowbreak 77^{1}\;\allowbreak 78^{2}\;\allowbreak 81^{2}\;\allowbreak 83^{2}\;\allowbreak 85^{1}\;\allowbreak 95^{2}}
\edgerow{37}{43^{2}\;\allowbreak 59^{1}\;\allowbreak 61^{2}\;\allowbreak 68^{2}\;\allowbreak 69^{1}\;\allowbreak 70^{2}\;\allowbreak 72^{2}\;\allowbreak 75^{2}\;\allowbreak 81^{1}\;\allowbreak 90^{1}\;\allowbreak 91^{2}\;\allowbreak 93^{1}\;\allowbreak 97^{1}}
\edgerow{38}{40^{1}\;\allowbreak 47^{2}\;\allowbreak 48^{2}\;\allowbreak 63^{2}}
\edgerow{39}{40^{2}\;\allowbreak 46^{2}\;\allowbreak 56^{1}\;\allowbreak 57^{1}\;\allowbreak 58^{1}\;\allowbreak 61^{1}\;\allowbreak 65^{2}\;\allowbreak 68^{1}\;\allowbreak 70^{2}\;\allowbreak 71^{1}\;\allowbreak 76^{1}\;\allowbreak 77^{2}\;\allowbreak 79^{2}\;\allowbreak 84^{1}\;\allowbreak 86^{1}\;\allowbreak 96^{2}\;\allowbreak 97^{1}}
\edgerow{40}{44^{2}\;\allowbreak 56^{2}\;\allowbreak 60^{2}\;\allowbreak 62^{1}\;\allowbreak 63^{2}\;\allowbreak 64^{2}\;\allowbreak 71^{2}\;\allowbreak 79^{1}\;\allowbreak 88^{1}\;\allowbreak 93^{2}\;\allowbreak 94^{2}}
\edgerow{41}{43^{2}\;\allowbreak 52^{2}\;\allowbreak 55^{1}\;\allowbreak 64^{1}\;\allowbreak 67^{2}\;\allowbreak 70^{1}\;\allowbreak 90^{2}}
\edgerow{42}{48^{1}\;\allowbreak 53^{1}\;\allowbreak 55^{2}\;\allowbreak 66^{2}\;\allowbreak 72^{2}\;\allowbreak 79^{1}\;\allowbreak 85^{1}\;\allowbreak 97^{2}}
\edgerow{44}{55^{2}\;\allowbreak 62^{1}\;\allowbreak 64^{2}}
\edgerow{46}{49^{2}\;\allowbreak 53^{1}\;\allowbreak 56^{1}}
\edgerow{47}{56^{1}\;\allowbreak 60^{2}\;\allowbreak 61^{2}}
\edgerow{48}{51^{2}\;\allowbreak 56^{2}\;\allowbreak 58^{1}}
\edgerow{49}{54^{1}}
\edgerow{50}{53^{1}\;\allowbreak 59^{1}\;\allowbreak 64^{2}\;\allowbreak 65^{2}\;\allowbreak 68^{1}\;\allowbreak 76^{1}\;\allowbreak 77^{2}\;\allowbreak 78^{1}\;\allowbreak 82^{2}\;\allowbreak 90^{2}\;\allowbreak 93^{2}\;\allowbreak 96^{2}}
\edgerow{51}{52^{2}\;\allowbreak 57^{1}\;\allowbreak 62^{1}\;\allowbreak 66^{1}\;\allowbreak 70^{1}\;\allowbreak 72^{2}\;\allowbreak 95^{2}\;\allowbreak 97^{1}}
\edgerow{52}{53^{2}\;\allowbreak 62^{2}\;\allowbreak 65^{1}\;\allowbreak 67^{2}\;\allowbreak 68^{1}\;\allowbreak 75^{2}\;\allowbreak 88^{1}\;\allowbreak 93^{2}\;\allowbreak 97^{2}}
\edgerow{53}{57^{1}\;\allowbreak 64^{1}\;\allowbreak 77^{2}\;\allowbreak 96^{1}}
\edgerow{55}{57^{2}\;\allowbreak 65^{1}\;\allowbreak 68^{1}\;\allowbreak 73^{2}\;\allowbreak 76^{2}\;\allowbreak 79^{1}\;\allowbreak 86^{2}\;\allowbreak 93^{1}}
\edgerow{56}{70^{1}\;\allowbreak 77^{2}\;\allowbreak 78^{1}\;\allowbreak 81^{1}\;\allowbreak 83^{2}\;\allowbreak 88^{1}\;\allowbreak 89^{1}\;\allowbreak 90^{2}}
\edgerow{57}{60^{1}}
\edgerow{59}{63^{1}}
\edgerow{60}{62^{1}\;\allowbreak 64^{1}\;\allowbreak 68^{2}\;\allowbreak 69^{2}\;\allowbreak 70^{2}\;\allowbreak 71^{1}\;\allowbreak 72^{2}\;\allowbreak 73^{2}\;\allowbreak 76^{1}\;\allowbreak 77^{1}\;\allowbreak 79^{2}\;\allowbreak 81^{2}\;\allowbreak 82^{2}\;\allowbreak 89^{2}\;\allowbreak 97^{1}}
\edgerow{61}{63^{1}\;\allowbreak 64^{2}\;\allowbreak 70^{2}\;\allowbreak 71^{1}\;\allowbreak 72^{1}\;\allowbreak 74^{1}\;\allowbreak 75^{2}\;\allowbreak 76^{1}\;\allowbreak 77^{2}\;\allowbreak 78^{1}\;\allowbreak 79^{2}\;\allowbreak 82^{1}\;\allowbreak 89^{2}\;\allowbreak 93^{1}\;\allowbreak 96^{2}\;\allowbreak 97^{2}}
\edgerow{63}{73^{1}\;\allowbreak 82^{1}}
\edgerow{65}{66^{1}\;\allowbreak 73^{2}\;\allowbreak 74^{2}\;\allowbreak 86^{2}\;\allowbreak 88^{2}\;\allowbreak 90^{1}\;\allowbreak 93^{2}\;\allowbreak 94^{1}\;\allowbreak 95^{2}\;\allowbreak 96^{2}}
\edgerow{66}{67^{2}\;\allowbreak 85^{2}\;\allowbreak 92^{1}}
\edgerow{67}{71^{2}\;\allowbreak 75^{1}\;\allowbreak 81^{1}\;\allowbreak 82^{2}\;\allowbreak 83^{1}\;\allowbreak 91^{1}}
\edgerow{68}{71^{2}\;\allowbreak 72^{2}\;\allowbreak 75^{1}\;\allowbreak 78^{2}\;\allowbreak 86^{1}\;\allowbreak 89^{1}\;\allowbreak 93^{1}\;\allowbreak 95^{1}}
\edgerow{69}{71^{1}\;\allowbreak 74^{2}\;\allowbreak 78^{2}\;\allowbreak 79^{1}\;\allowbreak 97^{2}}
\edgerow{70}{71^{2}\;\allowbreak 79^{1}\;\allowbreak 82^{2}\;\allowbreak 84^{2}\;\allowbreak 87^{2}\;\allowbreak 89^{1}\;\allowbreak 94^{2}\;\allowbreak 96^{1}}
\edgerow{71}{75^{2}\;\allowbreak 77^{1}\;\allowbreak 83^{1}\;\allowbreak 89^{2}\;\allowbreak 95^{2}\;\allowbreak 96^{1}\;\allowbreak 97^{1}}
\edgerow{72}{95^{2}}
\edgerow{73}{76^{2}\;\allowbreak 95^{1}}
\edgerow{74}{76^{2}\;\allowbreak 82^{1}\;\allowbreak 85^{1}\;\allowbreak 96^{2}}
\edgerow{75}{79^{1}\;\allowbreak 85^{1}\;\allowbreak 86^{2}}
\edgerow{76}{82^{1}\;\allowbreak 93^{2}\;\allowbreak 96^{1}\;\allowbreak 97^{2}}
\edgerow{77}{79^{2}\;\allowbreak 83^{2}\;\allowbreak 88^{2}\;\allowbreak 91^{1}\;\allowbreak 95^{1}}
\edgerow{78}{84^{1}}
\edgerow{79}{85^{1}\;\allowbreak 89^{1}\;\allowbreak 90^{1}\;\allowbreak 93^{2}\;\allowbreak 94^{2}\;\allowbreak 96^{1}}
\edgerow{80}{89^{1}\;\allowbreak 92^{1}}
\edgerow{81}{90^{2}\;\allowbreak 91^{1}\;\allowbreak 95^{1}}
\edgerow{82}{83^{2}\;\allowbreak 88^{1}\;\allowbreak 96^{2}}
\edgerow{83}{88^{2}\;\allowbreak 89^{2}\;\allowbreak 90^{2}}
\edgerow{84}{89^{1}\;\allowbreak 90^{1}}
\edgerow{86}{96^{2}\;\allowbreak 97^{1}}
\edgerow{88}{91^{1}}
\edgerow{90}{94^{2}}
\edgerow{91}{92^{2}}
\par
\endgroup\par\smallskip\noindent\textit{Weights of $G_1$: $623$ edges.}\par\begingroup\footnotesize\noindent
\edgerow{0}{6^{1}\;\allowbreak 12^{1}\;\allowbreak 14^{2}\;\allowbreak 15^{1}\;\allowbreak 16^{1}\;\allowbreak 17^{2}\;\allowbreak 24^{1}\;\allowbreak 39^{2}\;\allowbreak 40^{1}\;\allowbreak 41^{1}\;\allowbreak 42^{2}\;\allowbreak 44^{1}\;\allowbreak 60^{2}\;\allowbreak 61^{1}\;\allowbreak 62^{1}\;\allowbreak 65^{2}\;\allowbreak 70^{2}\;\allowbreak 72^{1}\;\allowbreak 73^{2}\;\allowbreak 78^{2}\;\allowbreak 86^{2}\;\allowbreak 96^{2}}
\edgerow{1}{6^{1}\;\allowbreak 8^{1}\;\allowbreak 9^{2}\;\allowbreak 14^{2}\;\allowbreak 23^{2}\;\allowbreak 26^{1}\;\allowbreak 27^{2}\;\allowbreak 30^{2}\;\allowbreak 31^{1}\;\allowbreak 33^{1}\;\allowbreak 34^{2}\;\allowbreak 39^{1}\;\allowbreak 40^{2}\;\allowbreak 44^{2}\;\allowbreak 47^{2}\;\allowbreak 60^{1}\;\allowbreak 61^{2}\;\allowbreak 65^{2}\;\allowbreak 67^{2}\;\allowbreak 68^{1}\;\allowbreak 72^{1}\;\allowbreak 77^{1}\;\allowbreak 86^{1}\;\allowbreak 89^{1}}
\edgerow{2}{6^{2}\;\allowbreak 10^{1}\;\allowbreak 21^{1}\;\allowbreak 23^{2}\;\allowbreak 26^{2}\;\allowbreak 27^{1}\;\allowbreak 38^{1}\;\allowbreak 39^{1}\;\allowbreak 40^{2}\;\allowbreak 47^{2}\;\allowbreak 57^{1}\;\allowbreak 59^{1}\;\allowbreak 60^{2}\;\allowbreak 61^{1}\;\allowbreak 65^{1}\;\allowbreak 69^{2}\;\allowbreak 75^{2}\;\allowbreak 77^{1}\;\allowbreak 84^{2}\;\allowbreak 89^{1}\;\allowbreak 93^{2}\;\allowbreak 95^{2}}
\edgerow{3}{8^{2}\;\allowbreak 18^{1}\;\allowbreak 19^{2}\;\allowbreak 20^{2}\;\allowbreak 23^{2}\;\allowbreak 29^{2}\;\allowbreak 36^{1}\;\allowbreak 37^{2}\;\allowbreak 57^{1}\;\allowbreak 67^{1}\;\allowbreak 74^{1}\;\allowbreak 77^{1}\;\allowbreak 81^{1}\;\allowbreak 93^{2}}
\edgerow{4}{26^{1}\;\allowbreak 27^{2}\;\allowbreak 30^{2}\;\allowbreak 31^{1}\;\allowbreak 39^{1}\;\allowbreak 40^{2}\;\allowbreak 43^{1}\;\allowbreak 45^{2}\;\allowbreak 47^{1}\;\allowbreak 59^{2}\;\allowbreak 85^{2}\;\allowbreak 87^{1}\;\allowbreak 89^{2}\;\allowbreak 95^{1}}
\edgerow{5}{6^{2}\;\allowbreak 9^{1}\;\allowbreak 10^{2}\;\allowbreak 22^{1}\;\allowbreak 23^{1}\;\allowbreak 25^{1}\;\allowbreak 26^{1}\;\allowbreak 27^{2}\;\allowbreak 36^{2}\;\allowbreak 37^{1}\;\allowbreak 41^{1}\;\allowbreak 42^{2}\;\allowbreak 43^{1}\;\allowbreak 45^{2}\;\allowbreak 57^{2}\;\allowbreak 64^{2}\;\allowbreak 65^{1}\;\allowbreak 68^{2}\;\allowbreak 69^{1}\;\allowbreak 76^{2}\;\allowbreak 77^{2}\;\allowbreak 79^{2}\;\allowbreak 85^{2}\;\allowbreak 87^{1}\;\allowbreak 93^{1}\;\allowbreak 97^{1}}
\edgerow{6}{8^{1}\;\allowbreak 18^{2}\;\allowbreak 19^{2}\;\allowbreak 20^{1}\;\allowbreak 32^{2}\;\allowbreak 43^{1}\;\allowbreak 44^{1}\;\allowbreak 47^{2}\;\allowbreak 55^{1}\;\allowbreak 60^{2}\;\allowbreak 63^{2}}
\edgerow{8}{9^{1}\;\allowbreak 13^{1}\;\allowbreak 16^{2}\;\allowbreak 18^{2}\;\allowbreak 25^{2}\;\allowbreak 26^{1}\;\allowbreak 29^{1}\;\allowbreak 36^{2}\;\allowbreak 38^{2}\;\allowbreak 42^{1}\;\allowbreak 60^{2}}
\edgerow{9}{10^{2}\;\allowbreak 13^{2}\;\allowbreak 14^{1}\;\allowbreak 16^{1}\;\allowbreak 17^{2}\;\allowbreak 19^{1}\;\allowbreak 24^{2}\;\allowbreak 27^{1}\;\allowbreak 32^{1}\;\allowbreak 36^{1}\;\allowbreak 37^{1}\;\allowbreak 40^{2}\;\allowbreak 45^{2}\;\allowbreak 55^{2}\;\allowbreak 59^{2}\;\allowbreak 60^{2}\;\allowbreak 61^{1}}
\edgerow{10}{13^{1}\;\allowbreak 16^{1}\;\allowbreak 20^{2}\;\allowbreak 25^{1}\;\allowbreak 30^{1}\;\allowbreak 33^{2}\;\allowbreak 34^{2}\;\allowbreak 37^{1}\;\allowbreak 41^{2}\;\allowbreak 45^{2}\;\allowbreak 59^{2}\;\allowbreak 61^{1}\;\allowbreak 64^{2}}
\edgerow{11}{13^{1}\;\allowbreak 15^{1}\;\allowbreak 18^{2}\;\allowbreak 19^{1}\;\allowbreak 21^{2}\;\allowbreak 22^{2}\;\allowbreak 25^{1}\;\allowbreak 26^{1}\;\allowbreak 32^{2}\;\allowbreak 39^{2}\;\allowbreak 45^{1}\;\allowbreak 59^{1}\;\allowbreak 60^{1}\;\allowbreak 61^{2}\;\allowbreak 62^{2}\;\allowbreak 63^{1}\;\allowbreak 65^{2}\;\allowbreak 72^{1}\;\allowbreak 74^{2}\;\allowbreak 95^{1}\;\allowbreak 97^{2}}
\edgerow{12}{13^{2}\;\allowbreak 20^{1}\;\allowbreak 24^{2}\;\allowbreak 33^{2}\;\allowbreak 36^{2}\;\allowbreak 37^{2}\;\allowbreak 40^{1}\;\allowbreak 42^{1}}
\edgerow{13}{16^{1}\;\allowbreak 22^{1}\;\allowbreak 34^{1}\;\allowbreak 40^{2}\;\allowbreak 47^{2}\;\allowbreak 53^{2}\;\allowbreak 55^{1}\;\allowbreak 56^{2}\;\allowbreak 59^{2}\;\allowbreak 60^{1}\;\allowbreak 64^{1}}
\edgerow{14}{16^{2}\;\allowbreak 19^{1}\;\allowbreak 21^{1}\;\allowbreak 27^{1}\;\allowbreak 32^{1}\;\allowbreak 55^{2}\;\allowbreak 59^{1}\;\allowbreak 60^{1}\;\allowbreak 61^{1}\;\allowbreak 63^{1}}
\edgerow{15}{17^{2}\;\allowbreak 22^{2}\;\allowbreak 27^{1}\;\allowbreak 33^{2}\;\allowbreak 34^{1}\;\allowbreak 37^{2}\;\allowbreak 41^{1}\;\allowbreak 42^{1}\;\allowbreak 45^{2}\;\allowbreak 55^{2}\;\allowbreak 59^{1}\;\allowbreak 61^{1}}
\edgerow{16}{19^{2}\;\allowbreak 24^{1}\;\allowbreak 25^{2}\;\allowbreak 26^{2}\;\allowbreak 30^{1}\;\allowbreak 33^{2}\;\allowbreak 36^{2}\;\allowbreak 37^{2}\;\allowbreak 38^{2}\;\allowbreak 40^{2}\;\allowbreak 41^{1}\;\allowbreak 43^{1}\;\allowbreak 44^{2}\;\allowbreak 57^{1}\;\allowbreak 59^{1}\;\allowbreak 60^{2}\;\allowbreak 68^{2}\;\allowbreak 73^{2}\;\allowbreak 76^{2}\;\allowbreak 78^{2}}
\edgerow{17}{18^{1}\;\allowbreak 22^{1}\;\allowbreak 24^{1}\;\allowbreak 27^{1}\;\allowbreak 31^{2}\;\allowbreak 34^{1}\;\allowbreak 36^{1}\;\allowbreak 37^{1}\;\allowbreak 39^{1}\;\allowbreak 42^{2}\;\allowbreak 61^{1}\;\allowbreak 67^{2}\;\allowbreak 68^{1}\;\allowbreak 69^{1}\;\allowbreak 71^{1}\;\allowbreak 72^{2}\;\allowbreak 78^{2}\;\allowbreak 79^{1}\;\allowbreak 84^{1}\;\allowbreak 85^{2}\;\allowbreak 86^{1}\;\allowbreak 93^{2}\;\allowbreak 95^{2}}
\edgerow{18}{21^{2}\;\allowbreak 23^{1}\;\allowbreak 30^{2}\;\allowbreak 31^{2}\;\allowbreak 38^{1}\;\allowbreak 39^{2}\;\allowbreak 41^{2}\;\allowbreak 43^{1}\;\allowbreak 53^{2}\;\allowbreak 56^{2}\;\allowbreak 60^{2}\;\allowbreak 61^{1}\;\allowbreak 62^{1}\;\allowbreak 63^{1}\;\allowbreak 65^{2}\;\allowbreak 68^{1}\;\allowbreak 72^{1}\;\allowbreak 81^{1}\;\allowbreak 86^{1}\;\allowbreak 87^{1}\;\allowbreak 89^{1}\;\allowbreak 96^{1}}
\edgerow{19}{29^{2}\;\allowbreak 30^{1}\;\allowbreak 31^{1}\;\allowbreak 40^{1}\;\allowbreak 42^{1}\;\allowbreak 44^{2}\;\allowbreak 45^{2}\;\allowbreak 47^{2}\;\allowbreak 52^{1}\;\allowbreak 56^{1}\;\allowbreak 60^{2}\;\allowbreak 61^{1}\;\allowbreak 62^{2}\;\allowbreak 63^{2}\;\allowbreak 65^{2}\;\allowbreak 67^{2}\;\allowbreak 75^{1}\;\allowbreak 77^{2}\;\allowbreak 84^{2}\;\allowbreak 85^{2}\;\allowbreak 96^{2}}
\edgerow{20}{22^{1}\;\allowbreak 30^{1}\;\allowbreak 32^{2}\;\allowbreak 55^{2}\;\allowbreak 60^{2}\;\allowbreak 63^{2}}
\edgerow{21}{36^{2}\;\allowbreak 39^{1}\;\allowbreak 41^{2}\;\allowbreak 42^{1}\;\allowbreak 43^{1}\;\allowbreak 45^{2}\;\allowbreak 59^{1}\;\allowbreak 60^{1}\;\allowbreak 61^{2}}
\edgerow{22}{24^{1}\;\allowbreak 31^{2}\;\allowbreak 32^{2}\;\allowbreak 39^{1}\;\allowbreak 41^{1}\;\allowbreak 57^{2}\;\allowbreak 59^{1}\;\allowbreak 61^{2}\;\allowbreak 62^{2}}
\edgerow{23}{25^{2}\;\allowbreak 27^{2}\;\allowbreak 31^{2}\;\allowbreak 35^{2}\;\allowbreak 40^{2}\;\allowbreak 43^{2}\;\allowbreak 47^{2}\;\allowbreak 60^{1}}
\edgerow{24}{37^{1}\;\allowbreak 44^{1}\;\allowbreak 56^{2}\;\allowbreak 62^{2}}
\edgerow{25}{39^{1}\;\allowbreak 40^{1}\;\allowbreak 41^{2}\;\allowbreak 55^{1}\;\allowbreak 57^{2}\;\allowbreak 60^{2}\;\allowbreak 61^{2}\;\allowbreak 62^{2}}
\edgerow{26}{29^{1}\;\allowbreak 30^{1}\;\allowbreak 32^{1}\;\allowbreak 33^{2}\;\allowbreak 38^{1}\;\allowbreak 40^{1}\;\allowbreak 45^{2}\;\allowbreak 55^{2}\;\allowbreak 64^{2}\;\allowbreak 68^{1}\;\allowbreak 72^{1}\;\allowbreak 73^{1}\;\allowbreak 77^{2}\;\allowbreak 85^{1}\;\allowbreak 87^{2}}
\edgerow{27}{31^{2}\;\allowbreak 34^{1}\;\allowbreak 39^{2}\;\allowbreak 43^{2}\;\allowbreak 45^{1}\;\allowbreak 55^{1}\;\allowbreak 63^{1}\;\allowbreak 67^{1}\;\allowbreak 81^{2}\;\allowbreak 82^{2}\;\allowbreak 84^{2}\;\allowbreak 87^{1}\;\allowbreak 97^{1}}
\edgerow{29}{37^{1}\;\allowbreak 38^{2}\;\allowbreak 56^{2}}
\edgerow{30}{32^{1}\;\allowbreak 39^{1}\;\allowbreak 43^{2}\;\allowbreak 61^{1}\;\allowbreak 76^{2}\;\allowbreak 77^{2}\;\allowbreak 85^{1}\;\allowbreak 87^{1}\;\allowbreak 89^{2}}
\edgerow{31}{40^{2}\;\allowbreak 43^{2}\;\allowbreak 45^{2}\;\allowbreak 47^{1}\;\allowbreak 60^{2}\;\allowbreak 69^{1}\;\allowbreak 74^{1}\;\allowbreak 82^{2}\;\allowbreak 85^{1}}
\edgerow{32}{35^{1}\;\allowbreak 55^{2}\;\allowbreak 60^{1}\;\allowbreak 61^{2}}
\edgerow{33}{38^{2}\;\allowbreak 41^{2}\;\allowbreak 55^{2}\;\allowbreak 59^{2}\;\allowbreak 61^{1}\;\allowbreak 69^{2}\;\allowbreak 71^{1}\;\allowbreak 73^{1}\;\allowbreak 93^{1}}
\edgerow{34}{42^{1}\;\allowbreak 55^{1}\;\allowbreak 57^{2}\;\allowbreak 60^{2}\;\allowbreak 69^{2}\;\allowbreak 70^{2}\;\allowbreak 73^{2}\;\allowbreak 84^{1}\;\allowbreak 95^{1}}
\edgerow{36}{47^{2}\;\allowbreak 68^{1}\;\allowbreak 69^{1}\;\allowbreak 70^{2}\;\allowbreak 73^{2}\;\allowbreak 78^{1}\;\allowbreak 81^{1}\;\allowbreak 85^{1}\;\allowbreak 86^{1}}
\edgerow{37}{43^{2}\;\allowbreak 44^{2}\;\allowbreak 67^{2}\;\allowbreak 68^{1}\;\allowbreak 70^{2}\;\allowbreak 75^{2}\;\allowbreak 89^{1}}
\edgerow{38}{40^{2}\;\allowbreak 61^{1}}
\edgerow{39}{40^{2}\;\allowbreak 41^{1}\;\allowbreak 45^{2}\;\allowbreak 47^{2}\;\allowbreak 57^{2}\;\allowbreak 61^{1}\;\allowbreak 68^{2}\;\allowbreak 70^{1}\;\allowbreak 71^{2}\;\allowbreak 77^{2}\;\allowbreak 79^{1}\;\allowbreak 84^{2}\;\allowbreak 95^{1}}
\edgerow{40}{42^{2}\;\allowbreak 59^{2}\;\allowbreak 60^{2}\;\allowbreak 63^{2}\;\allowbreak 75^{1}\;\allowbreak 76^{1}\;\allowbreak 79^{1}\;\allowbreak 87^{1}\;\allowbreak 89^{1}\;\allowbreak 93^{1}}
\edgerow{41}{42^{1}\;\allowbreak 45^{1}\;\allowbreak 55^{1}\;\allowbreak 64^{2}\;\allowbreak 67^{1}\;\allowbreak 70^{1}\;\allowbreak 73^{1}\;\allowbreak 75^{1}}
\edgerow{42}{55^{2}\;\allowbreak 69^{2}\;\allowbreak 73^{1}\;\allowbreak 75^{2}\;\allowbreak 76^{1}\;\allowbreak 79^{2}\;\allowbreak 87^{2}\;\allowbreak 97^{1}}
\edgerow{43}{45^{2}}
\edgerow{45}{47^{2}\;\allowbreak 52^{2}}
\edgerow{47}{61^{1}\;\allowbreak 62^{1}\;\allowbreak 64^{1}}
\edgerow{52}{71^{2}}
\edgerow{53}{87^{2}}
\edgerow{55}{57^{1}\;\allowbreak 65^{1}\;\allowbreak 68^{2}\;\allowbreak 71^{1}\;\allowbreak 72^{2}\;\allowbreak 73^{2}\;\allowbreak 74^{2}\;\allowbreak 79^{1}\;\allowbreak 82^{2}\;\allowbreak 93^{2}}
\edgerow{56}{62^{1}\;\allowbreak 71^{2}\;\allowbreak 78^{2}\;\allowbreak 81^{2}\;\allowbreak 96^{2}}
\edgerow{57}{59^{1}}
\edgerow{59}{61^{2}\;\allowbreak 62^{1}\;\allowbreak 63^{1}}
\edgerow{60}{62^{1}\;\allowbreak 63^{2}\;\allowbreak 68^{1}\;\allowbreak 69^{1}\;\allowbreak 72^{1}\;\allowbreak 73^{1}\;\allowbreak 75^{2}\;\allowbreak 76^{2}\;\allowbreak 79^{2}\;\allowbreak 82^{2}\;\allowbreak 84^{1}\;\allowbreak 89^{1}\;\allowbreak 95^{2}\;\allowbreak 96^{1}\;\allowbreak 97^{2}}
\edgerow{61}{62^{2}\;\allowbreak 63^{1}\;\allowbreak 64^{1}\;\allowbreak 65^{2}\;\allowbreak 67^{2}\;\allowbreak 68^{2}\;\allowbreak 71^{1}\;\allowbreak 72^{1}\;\allowbreak 74^{2}\;\allowbreak 75^{1}\;\allowbreak 77^{1}\;\allowbreak 79^{2}\;\allowbreak 82^{1}\;\allowbreak 96^{2}}
\edgerow{63}{64^{1}\;\allowbreak 71^{2}\;\allowbreak 73^{2}\;\allowbreak 75^{1}\;\allowbreak 76^{1}\;\allowbreak 79^{2}\;\allowbreak 82^{1}\;\allowbreak 87^{2}\;\allowbreak 95^{2}\;\allowbreak 96^{1}}
\edgerow{65}{67^{2}\;\allowbreak 74^{2}\;\allowbreak 82^{1}\;\allowbreak 85^{2}\;\allowbreak 86^{2}\;\allowbreak 89^{1}}
\edgerow{67}{68^{2}\;\allowbreak 71^{2}\;\allowbreak 79^{1}\;\allowbreak 81^{2}\;\allowbreak 84^{1}}
\edgerow{68}{69^{1}\;\allowbreak 71^{1}\;\allowbreak 72^{2}\;\allowbreak 78^{1}\;\allowbreak 82^{2}\;\allowbreak 87^{1}\;\allowbreak 95^{1}}
\edgerow{69}{71^{2}\;\allowbreak 74^{1}\;\allowbreak 79^{2}\;\allowbreak 87^{1}\;\allowbreak 95^{1}\;\allowbreak 97^{1}}
\edgerow{70}{71^{1}\;\allowbreak 74^{2}\;\allowbreak 78^{1}}
\edgerow{71}{76^{2}\;\allowbreak 89^{1}\;\allowbreak 95^{1}\;\allowbreak 97^{2}}
\edgerow{72}{75^{2}\;\allowbreak 82^{2}\;\allowbreak 95^{2}}
\edgerow{73}{76^{1}\;\allowbreak 87^{1}\;\allowbreak 95^{2}}
\edgerow{74}{76^{2}\;\allowbreak 82^{1}}
\edgerow{75}{85^{2}\;\allowbreak 87^{1}\;\allowbreak 95^{2}}
\edgerow{76}{78^{2}\;\allowbreak 82^{1}\;\allowbreak 93^{1}\;\allowbreak 95^{2}\;\allowbreak 96^{1}}
\edgerow{77}{79^{1}\;\allowbreak 83^{1}\;\allowbreak 85^{1}\;\allowbreak 89^{1}}
\edgerow{78}{86^{2}\;\allowbreak 96^{1}}
\edgerow{79}{93^{1}\;\allowbreak 96^{1}}
\edgerow{81}{84^{1}}
\edgerow{82}{83^{2}}
\edgerow{85}{87^{1}}
\edgerow{87}{89^{1}}
\edgerow{89}{96^{2}\;\allowbreak 97^{2}}
\edgerow{93}{95^{2}}
\edgerow{95}{96^{2}}
\par
\endgroup\par\smallskip\noindent\textit{Weights of $G_2$: $412$ edges.}\par\begingroup\footnotesize\noindent
\edgerow{0}{6^{1}\;\allowbreak 11^{2}\;\allowbreak 16^{1}\;\allowbreak 17^{2}\;\allowbreak 23^{1}\;\allowbreak 32^{1}\;\allowbreak 59^{1}\;\allowbreak 60^{2}\;\allowbreak 61^{1}\;\allowbreak 63^{1}\;\allowbreak 65^{2}\;\allowbreak 77^{2}\;\allowbreak 82^{2}\;\allowbreak 95^{2}}
\edgerow{1}{8^{2}\;\allowbreak 11^{1}\;\allowbreak 22^{1}\;\allowbreak 39^{1}\;\allowbreak 40^{2}\;\allowbreak 47^{2}\;\allowbreak 57^{1}\;\allowbreak 59^{1}\;\allowbreak 63^{2}\;\allowbreak 64^{2}\;\allowbreak 67^{1}\;\allowbreak 76^{2}\;\allowbreak 89^{1}\;\allowbreak 93^{2}\;\allowbreak 95^{2}\;\allowbreak 97^{1}}
\edgerow{2}{12^{1}\;\allowbreak 57^{2}\;\allowbreak 59^{2}\;\allowbreak 60^{2}\;\allowbreak 61^{1}\;\allowbreak 70^{2}\;\allowbreak 93^{1}\;\allowbreak 95^{1}}
\edgerow{3}{8^{2}\;\allowbreak 18^{1}\;\allowbreak 19^{2}\;\allowbreak 20^{2}\;\allowbreak 23^{2}\;\allowbreak 29^{2}\;\allowbreak 36^{1}\;\allowbreak 37^{2}\;\allowbreak 62^{1}\;\allowbreak 67^{1}\;\allowbreak 74^{1}\;\allowbreak 77^{1}\;\allowbreak 81^{1}\;\allowbreak 96^{2}}
\edgerow{4}{16^{1}\;\allowbreak 17^{2}\;\allowbreak 26^{1}\;\allowbreak 27^{2}\;\allowbreak 32^{2}\;\allowbreak 39^{1}\;\allowbreak 40^{2}\;\allowbreak 47^{2}\;\allowbreak 50^{1}\;\allowbreak 51^{2}\;\allowbreak 59^{1}\;\allowbreak 64^{2}\;\allowbreak 82^{1}\;\allowbreak 89^{1}\;\allowbreak 95^{2}\;\allowbreak 97^{1}}
\edgerow{5}{8^{1}\;\allowbreak 10^{1}\;\allowbreak 47^{1}\;\allowbreak 64^{2}\;\allowbreak 67^{2}\;\allowbreak 69^{2}\;\allowbreak 89^{2}\;\allowbreak 97^{1}}
\edgerow{6}{8^{2}\;\allowbreak 13^{2}\;\allowbreak 16^{2}\;\allowbreak 17^{2}\;\allowbreak 21^{2}\;\allowbreak 26^{2}\;\allowbreak 32^{1}\;\allowbreak 33^{2}\;\allowbreak 36^{2}\;\allowbreak 40^{2}\;\allowbreak 55^{1}\;\allowbreak 57^{2}\;\allowbreak 60^{2}\;\allowbreak 62^{1}\;\allowbreak 63^{2}}
\edgerow{8}{10^{1}\;\allowbreak 16^{1}\;\allowbreak 21^{2}\;\allowbreak 26^{2}\;\allowbreak 29^{1}\;\allowbreak 32^{1}\;\allowbreak 36^{1}\;\allowbreak 38^{1}\;\allowbreak 60^{1}}
\edgerow{9}{16^{2}\;\allowbreak 18^{1}\;\allowbreak 19^{1}\;\allowbreak 24^{1}\;\allowbreak 27^{1}\;\allowbreak 34^{2}\;\allowbreak 36^{2}\;\allowbreak 37^{2}\;\allowbreak 40^{1}\;\allowbreak 47^{1}\;\allowbreak 55^{2}\;\allowbreak 60^{2}\;\allowbreak 61^{2}}
\edgerow{10}{13^{1}\;\allowbreak 23^{1}\;\allowbreak 61^{1}\;\allowbreak 64^{2}}
\edgerow{11}{19^{1}\;\allowbreak 22^{1}\;\allowbreak 32^{1}\;\allowbreak 37^{1}\;\allowbreak 55^{1}\;\allowbreak 60^{2}\;\allowbreak 65^{2}\;\allowbreak 72^{1}\;\allowbreak 77^{2}}
\edgerow{12}{13^{2}\;\allowbreak 19^{2}\;\allowbreak 20^{1}\;\allowbreak 23^{1}\;\allowbreak 38^{1}\;\allowbreak 40^{1}\;\allowbreak 44^{1}\;\allowbreak 56^{1}\;\allowbreak 60^{2}}
\edgerow{13}{17^{2}\;\allowbreak 34^{1}\;\allowbreak 40^{2}\;\allowbreak 55^{2}\;\allowbreak 60^{1}\;\allowbreak 62^{2}}
\edgerow{14}{19^{1}\;\allowbreak 26^{1}\;\allowbreak 27^{2}\;\allowbreak 32^{1}\;\allowbreak 36^{2}\;\allowbreak 39^{1}\;\allowbreak 40^{1}\;\allowbreak 63^{1}\;\allowbreak 64^{2}}
\edgerow{16}{20^{2}\;\allowbreak 22^{1}\;\allowbreak 24^{2}\;\allowbreak 25^{2}\;\allowbreak 26^{1}\;\allowbreak 36^{1}\;\allowbreak 38^{1}\;\allowbreak 51^{1}\;\allowbreak 57^{2}\;\allowbreak 62^{1}\;\allowbreak 65^{2}\;\allowbreak 71^{2}\;\allowbreak 75^{1}\;\allowbreak 78^{1}\;\allowbreak 95^{2}\;\allowbreak 96^{1}}
\edgerow{17}{21^{2}\;\allowbreak 24^{2}\;\allowbreak 27^{2}\;\allowbreak 37^{2}\;\allowbreak 50^{2}\;\allowbreak 59^{1}\;\allowbreak 62^{2}\;\allowbreak 65^{2}\;\allowbreak 67^{1}\;\allowbreak 68^{2}\;\allowbreak 74^{1}\;\allowbreak 76^{2}\;\allowbreak 78^{1}\;\allowbreak 79^{1}\;\allowbreak 84^{2}\;\allowbreak 93^{1}\;\allowbreak 96^{2}}
\edgerow{18}{21^{2}\;\allowbreak 23^{1}\;\allowbreak 24^{1}\;\allowbreak 32^{1}\;\allowbreak 34^{2}\;\allowbreak 38^{2}\;\allowbreak 39^{2}\;\allowbreak 56^{2}\;\allowbreak 60^{2}\;\allowbreak 61^{2}\;\allowbreak 62^{1}\;\allowbreak 63^{1}\;\allowbreak 64^{1}\;\allowbreak 68^{1}\;\allowbreak 70^{2}\;\allowbreak 72^{1}\;\allowbreak 78^{1}\;\allowbreak 81^{1}\;\allowbreak 89^{2}}
\edgerow{19}{24^{2}\;\allowbreak 29^{2}\;\allowbreak 33^{1}\;\allowbreak 40^{1}\;\allowbreak 47^{1}\;\allowbreak 56^{1}\;\allowbreak 60^{1}\;\allowbreak 61^{1}\;\allowbreak 68^{1}\;\allowbreak 75^{1}\;\allowbreak 77^{2}\;\allowbreak 78^{2}\;\allowbreak 82^{2}\;\allowbreak 84^{1}\;\allowbreak 96^{2}\;\allowbreak 97^{2}}
\edgerow{20}{22^{1}\;\allowbreak 23^{1}\;\allowbreak 33^{1}\;\allowbreak 39^{1}\;\allowbreak 61^{1}\;\allowbreak 62^{2}}
\edgerow{21}{25^{2}\;\allowbreak 60^{1}}
\edgerow{22}{51^{2}\;\allowbreak 55^{2}\;\allowbreak 62^{1}\;\allowbreak 64^{1}}
\edgerow{23}{25^{2}\;\allowbreak 46^{1}\;\allowbreak 56^{2}\;\allowbreak 63^{2}}
\edgerow{24}{32^{1}\;\allowbreak 37^{2}\;\allowbreak 44^{2}\;\allowbreak 56^{2}\;\allowbreak 60^{1}}
\edgerow{25}{60^{2}}
\edgerow{26}{38^{2}\;\allowbreak 40^{1}\;\allowbreak 47^{1}\;\allowbreak 57^{2}\;\allowbreak 59^{2}\;\allowbreak 60^{2}\;\allowbreak 68^{1}\;\allowbreak 72^{2}\;\allowbreak 97^{2}}
\edgerow{27}{39^{2}\;\allowbreak 61^{1}\;\allowbreak 64^{1}\;\allowbreak 65^{2}\;\allowbreak 67^{2}\;\allowbreak 72^{1}\;\allowbreak 84^{1}\;\allowbreak 89^{2}\;\allowbreak 93^{1}\;\allowbreak 95^{1}}
\edgerow{29}{37^{1}\;\allowbreak 38^{2}\;\allowbreak 56^{2}\;\allowbreak 61^{1}}
\edgerow{32}{34^{1}\;\allowbreak 51^{2}\;\allowbreak 56^{2}\;\allowbreak 59^{1}\;\allowbreak 60^{1}\;\allowbreak 61^{2}}
\edgerow{33}{37^{2}\;\allowbreak 38^{1}\;\allowbreak 42^{2}\;\allowbreak 56^{2}\;\allowbreak 64^{2}\;\allowbreak 68^{2}\;\allowbreak 71^{1}\;\allowbreak 82^{1}}
\edgerow{34}{36^{1}\;\allowbreak 41^{1}\;\allowbreak 56^{1}\;\allowbreak 65^{2}\;\allowbreak 74^{1}\;\allowbreak 84^{2}\;\allowbreak 97^{1}}
\edgerow{36}{40^{2}\;\allowbreak 60^{2}\;\allowbreak 61^{2}\;\allowbreak 63^{1}\;\allowbreak 68^{2}\;\allowbreak 78^{2}\;\allowbreak 81^{1}\;\allowbreak 96^{2}}
\edgerow{37}{39^{1}\;\allowbreak 60^{1}\;\allowbreak 61^{1}\;\allowbreak 62^{1}\;\allowbreak 65^{2}\;\allowbreak 67^{1}\;\allowbreak 68^{2}\;\allowbreak 72^{2}}
\edgerow{38}{40^{2}\;\allowbreak 61^{2}}
\edgerow{39}{40^{1}\;\allowbreak 47^{1}\;\allowbreak 60^{2}\;\allowbreak 65^{2}\;\allowbreak 68^{1}\;\allowbreak 70^{1}\;\allowbreak 71^{2}\;\allowbreak 72^{1}\;\allowbreak 84^{2}}
\edgerow{40}{61^{1}\;\allowbreak 72^{1}\;\allowbreak 74^{1}\;\allowbreak 89^{2}}
\edgerow{41}{57^{2}\;\allowbreak 64^{1}}
\edgerow{42}{93^{1}\;\allowbreak 97^{2}}
\edgerow{46}{50^{1}}
\edgerow{47}{60^{1}\;\allowbreak 61^{1}\;\allowbreak 64^{2}}
\edgerow{50}{51^{2}\;\allowbreak 56^{1}\;\allowbreak 59^{2}\;\allowbreak 64^{1}\;\allowbreak 76^{2}\;\allowbreak 82^{2}\;\allowbreak 96^{1}}
\edgerow{51}{56^{2}\;\allowbreak 62^{2}\;\allowbreak 88^{1}\;\allowbreak 95^{1}\;\allowbreak 97^{2}}
\edgerow{55}{57^{2}\;\allowbreak 63^{1}\;\allowbreak 65^{1}\;\allowbreak 68^{2}\;\allowbreak 71^{2}\;\allowbreak 76^{2}\;\allowbreak 93^{1}}
\edgerow{56}{62^{2}\;\allowbreak 70^{1}\;\allowbreak 77^{2}\;\allowbreak 78^{2}\;\allowbreak 81^{2}\;\allowbreak 82^{2}\;\allowbreak 96^{1}}
\edgerow{59}{61^{1}}
\edgerow{60}{68^{2}\;\allowbreak 69^{1}\;\allowbreak 74^{1}\;\allowbreak 81^{1}\;\allowbreak 82^{2}\;\allowbreak 84^{2}\;\allowbreak 89^{1}\;\allowbreak 95^{1}\;\allowbreak 97^{2}}
\edgerow{61}{63^{2}\;\allowbreak 64^{1}\;\allowbreak 65^{2}\;\allowbreak 67^{1}\;\allowbreak 68^{2}\;\allowbreak 70^{2}\;\allowbreak 71^{1}\;\allowbreak 75^{1}\;\allowbreak 78^{1}\;\allowbreak 79^{2}\;\allowbreak 82^{1}\;\allowbreak 89^{1}}
\edgerow{62}{64^{1}}
\edgerow{63}{76^{2}\;\allowbreak 82^{2}}
\edgerow{65}{67^{1}\;\allowbreak 71^{1}\;\allowbreak 75^{1}\;\allowbreak 82^{2}\;\allowbreak 93^{1}\;\allowbreak 96^{2}}
\edgerow{67}{69^{2}\;\allowbreak 75^{1}\;\allowbreak 81^{2}\;\allowbreak 82^{2}\;\allowbreak 84^{2}}
\edgerow{68}{78^{2}\;\allowbreak 89^{2}}
\edgerow{69}{71^{2}\;\allowbreak 77^{2}\;\allowbreak 97^{1}}
\edgerow{70}{71^{1}\;\allowbreak 74^{2}\;\allowbreak 77^{2}\;\allowbreak 84^{2}\;\allowbreak 86^{2}}
\edgerow{71}{96^{1}}
\edgerow{72}{82^{2}\;\allowbreak 97^{1}}
\edgerow{74}{76^{2}\;\allowbreak 77^{2}\;\allowbreak 96^{1}}
\edgerow{75}{79^{1}}
\edgerow{76}{96^{2}\;\allowbreak 97^{2}}
\edgerow{77}{79^{1}\;\allowbreak 88^{2}}
\edgerow{78}{82^{2}\;\allowbreak 86^{1}}
\edgerow{81}{84^{1}}
\edgerow{82}{95^{2}}
\edgerow{89}{97^{1}}
\edgerow{96}{97^{2}}
\par
\endgroup\par\smallskip\noindent\textit{The graph $G_3$: $K=\Q,\ n=2,\ m=14$, $23$ edges.}
\begingroup\footnotesize\setlength{\tabcolsep}{3pt}\setlength{\LTpre}{3pt}\setlength{\LTpost}{3pt}\renewcommand{\arraystretch}{1.05}
\begin{longtable}{@{}rll@{\hspace{12pt}}rll@{}}
\toprule $i$ & $H_i$ & $c_i$ & $i$ & $H_i$ & $c_i$\\\midrule\endfirsthead
\multicolumn{6}{l}{\textit{Vertices (continued)}}\\\toprule $i$ & $H_i$ & $c_i$ & $i$ & $H_i$ & $c_i$\\\midrule\endhead\bottomrule\endfoot
0 & $y - 1$ & $1$ & 7 & $2 x + y + 7$ & $38$\\
1 & $x - 1$ & $-3$ & 8 & $2 x - y - 2$ & $-1$\\
2 & $2 x - 2 y - 2$ & $1$ & 9 & $2 x^{2} - 2 x - y - 1$ & $-1$\\
3 & $x - 2$ & $1$ & 10 & $x^{2} + x - y + 1$ & $1$\\
4 & $2 x + 1$ & $-3$ & 11 & $x^{2} - x - y - 1$ & $2$\\
5 & $x + y + 2$ & $1$ & 12 & $x^{2} - 3 x - 2 y - 1$ & $-1$\\
6 & $4 x + y + 3$ & $19$ & 13 & $x + y^{2} - 1$ & $1$\\
\end{longtable}\addtocounter{table}{-1}
\par\noindent\edgerow{0}{2\;\allowbreak 3\;\allowbreak 7}
\edgerow{1}{5\;\allowbreak 8\;\allowbreak 9\;\allowbreak 12\;\allowbreak 13}
\edgerow{2}{8\;\allowbreak 11\;\allowbreak 13}
\edgerow{3}{5\;\allowbreak 9}
\edgerow{4}{7\;\allowbreak 8\;\allowbreak 11}
\edgerow{5}{10\;\allowbreak 11}
\edgerow{6}{9\;\allowbreak 11}
\edgerow{8}{11\;\allowbreak 12}
\edgerow{10}{13}
\par\endgroup
\par\smallskip\noindent\textit{The graph $G_4$: $K=\Q,\ n=2$, one vertex} $2 x^{2} + 2 y - 3$ \textit{with constant} $2$\textit{, no edges.}

\begin{proposition}\label{prop:h34-last}
Equation \eqref{eq:H34last} has no integer solutions.
\end{proposition}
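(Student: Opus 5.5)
The plan is to apply the joint form of Theorem~\ref{prop:cubic-certificate} to $f=y^3+xy-y-x^4+4$. We use the $r=3$ cubic graphs $G_0,G_1,G_2$ and the $s=2$ quadratic graphs $G_3,G_4$ listed above, with $d=4$ and $h=2$. The exceptional set $S$ will contain $2$ and $3$, together with every prime dividing a constant norm $\Norm(c_i)$ or an edge-resultant norm. The constants contain the factors $5$, $7$, $13$, $19$, $37$ and others, so $S$ will be somewhat larger than \eqref{eq:three-cubic-S}. The contradiction is to be obtained in $\mathcal A=\mathbb F_3^3\times\mathbb F_2^2$: we show that $\mathbf 0\notin C_\infty+\sum_{p\in S}C_p$.

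\textbf{Step 1: the six algebraic assumptions, graph by graph.} These are checked exactly as in Proposition~\ref{prop:three-cubic}.
\begin{itemize}
\item \emph{Nonvanishing (Assumption~\ref{ass:cubic-nonzero}).} Almost every vertex is linear in one coordinate, so we solve for that coordinate, substitute into $f$, and apply the rational root test. For the few vertices quadratic in $x$, such as $H_{48}$--$H_{54}$, $H_{90}$--$H_{92}$, and $H_9$--$H_{13}$ of $G_3$, we instead solve for $y$, substitute, and exclude integer roots of the resulting degree-$6$ polynomials.
\item \emph{Charts and coverage (Assumptions~\ref{ass:cubic-charts} and~\ref{ass:cubic-coverage}).} Since every vertex is linear in $y$ or in $x$, each chart is polynomial with $\Delta_i=1$. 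Coverage then reduces to invertibility of the leading coefficient modulo $\pi$, which holds outside $S$.
\item \emph{Cube and square identities (Assumption~\ref{ass:cubic-identities}).} We recover $W_i$ by the cube-root procedure of Section~\ref{sec:cubic-local}, and by its square-root analogue for $G_3$ and $G_4$. We then check that each division \eqref{eq:cubic-vertex} is exact, with exponent $2$ in place of $3$ for the quadratic graphs.
\item \emph{Resultants and denominators (Assumptions~\ref{ass:cubic-resultants} and~\ref{ass:cubic-primes}).} We compute all edge resultants and confirm that they are nonzero. We then confirm that these resultants and all coefficient denominators are supported on $S$.
\end{itemize}
This step is entirely mechanical and is delegated to the Lean-verified certificate.

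\textbf{Step 2: joint local bounds.} For each $p\in S$ we enumerate the solution classes of $f\equiv0$ modulo $p$. Each class is refined by the lifting rule \eqref{eq:cubic-lifting} until every graph's contribution is constant on the compatible coordinates. We retain the integrality restrictions: character $0$ at $2$ for integer values, relation \eqref{eq:cubic-rational} at $3$, and opposite characters at conjugate primes when $p\equiv1\pmod3$.

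The crucial point is that the vector $\mathbf B_p\in\mathcal A$ must be computed \emph{simultaneously} on each class. This is because the cubic and quadratic coordinates are only jointly incompatible. The archimedean set $C_\infty$ is obtained from the sign pattern of the $G_3$ and $G_4$ values. For this we first pin down the possible real region of solutions. At a solution $x^4-4=y(y^2+x-1)$, so $y>0$ for large $|x|$, and the signs of quantities such as $x-1$, $2x+1$ and $x^2-3x-2y-1$ are controlled by $y\approx x^{4/3}$. The finitely many small-$|x|$ exceptions are handled by direct check.

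\textbf{Step 3: exclusion.} Finally we verify that $\mathbf0\notin C_\infty+\sum_{p\in S}C_p$ by a finite sumset computation in $\mathcal A$. We expect the main obstacle to be Step~2, especially at $p=2$ and $p=3$. There the precision needed to fix all $98$ valuations and unit characters may be high, because several $H_i$ vanish to large order at the same class. The classes must also be tracked jointly across five graphs.

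If a class does not become terminal at a reasonable precision, the first fallback is to leave the undetermined coordinates free and check that they cancel in each graph's sum, as happened for the parameter $t$ in Proposition~\ref{prop:detailed}. If they do not cancel, we would deepen the refinement only on that class.
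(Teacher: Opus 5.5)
Your plan has the same architecture as the paper's proof: the five listed graphs, $d=4$, $h=2$, the joint form of Theorem~\ref{prop:cubic-certificate}, joint local covers, a sign computation at infinity, and a sumset check. However, it stops at describing the computation, and for this equation the outcome of that computation \emph{is} the proof. The paper supplies three things you do not. First, the explicit set \eqref{eq:h34-S}; its primes $11,13,31,37,\dots,379$ come from edge resultants and coefficient denominators, since the norms of the constants $c_i$ are supported on $\{2,3,5,7,19\}$ only, so your remark about $13$ and $37$ confuses coefficients with norms. Second, the covers of Table~\ref{tab:h34-local}. Third, the fact that among the $105$ resulting sums the only ones with zero cubic part are $(000;01)$, $(000;10)$, $(000;11)$.

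Hence $C_\infty=\{\mathbf 0\}$ is indispensable, and your archimedean step does not establish it. The signs of the $G_3$ vertices on the real branches are not settled by $y\approx x^{4/3}$ until $|x|$ is fairly large. On the branch $x\ge2$, $x^2-x-y-1$ changes sign between $x=2$ and $x=3$, and $x^2-3x-2y-1$ between $x=6$ and $x=7$. On the branch $x\le-2$, $4x+y+3$ changes sign only between $x=-62$ and $x=-61$.

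The paper instead proves that $B^{(3)}_\infty$ is constant on each branch. At a real zero of $H_i$ no neighbour vanishes, by the nonzero resultants, and the square identity makes $c_i\prod_jH_j^{\widetilde\rho_{ij}}>0$, so an even number of terms $q_\infty$ flip. It then evaluates at $x=\pm10$, where the negative vertex sets are $\{2,8\}$ and $\{1,2,3,4,6,8\}$, and gets $0$; also $B^{(4)}_\infty=0$ because $c=2>0$. Your asymptotic route can be completed, but only with explicit sign thresholds and an exhaustive search over roughly $-62\le x\le7$.

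There is a second gap in the local covers: you retain only the restrictions for rational-valued vertices. The vertex list of $G_0,G_1,G_2$ is closed under conjugation: $H_{37}=\overline{H_{36}}$ with $H_{36}=x-\omega$, $H_{70}=\overline{H_{12}}$, $H_{17}=\overline{H_{16}}$, $H_{19}=\overline{H_{18}}$, and so on. The paper keeps the induced relations between their coordinates: exchange of $\pi,\overline\pi$ with negated characters at split primes, and $(v,k,m,\ell)\mapsto(v,2v-k,m,m-\ell)$ at $3$. It also uses each unknown coordinate of a vertex consistently in all three cubic graphs, which share their $98$ vertices.

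This matters because every prime of $S$ except $2,3,5,11$ splits, and such vertices have $p$-adic zeros on the curve. For instance, $(4,0)$ solves $f\equiv0\pmod7$ with $f_y\equiv3$. It therefore lifts to a $7$-adic point whose $x$-coordinate is the root of $x^2+x+1$ congruent to $4$, where $H_{36}$ and $H_{37}$ vanish at the two conjugate primes above $7$. Along that branch of the lifting tree the valuations are unbounded, so your fallback of deepening the refinement never terminates, and the undetermined coordinates must be carried. The paper's stopping rule is that all possible joint contributions lie in the prescribed $C_p$, not that they are constant. Treating conjugate unknowns independently can only enlarge $C_p$ beyond the paper's sets, for example beyond $C_7=\{(121;00)\}$, and could destroy the exclusion. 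As written, the proposal neither produces $C_p$ and $C_\infty$ nor ensures that the sets it would produce are small enough.
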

\begin{proof}[Proof outline]
Put $f=y^3+xy-y-x^4+4$. Use the three cubic graphs
$G_0,G_1,G_2$ and the two quadratic graphs $G_3,G_4$ just specified.
Their numbers of vertices and edges are
\[
\begin{array}{c|rrrrr}
 &G_0&G_1&G_2&G_3&G_4\\\hline
\text{vertices}&98&98&98&14&1\\
\text{edges}&766&623&412&23&0
\end{array}
\]
 Set $d=4$, $h=2$ and
\begin{equation}\label{eq:h34-S}
 S=\{2,3,5,7,11,13,19,31,37,43,61,67,73,97,109,379\}.
\end{equation}
We check the first six assumptions for each graph, and the last two jointly.

\emph{Assumption~\ref{ass:cubic-nonzero}: nonzero auxiliary values.}
Each listed vertex can be written $H_i=a_i z+b_i(t)$, where
$\{z,t\}=\{x,y\}$ and $a_i$ is a nonzero constant. Use $z=y,t=x$
when this is possible, and otherwise $z=x,t=y$.
Substitute $z=-b_i(t)/a_i$ into $f$ and write the result as
$u_i(t)+\omega v_i(t)$ with $u_i,v_i\in\Q[t]$; in a quadratic
graph $v_i=0$. Any integer common zero of $f,H_i$ would give an
integer root of the greatest common divisor of $u_i,v_i$.
Testing these integer roots, with the second coordinate required
to be integral, leaves none, for all $98$ cubic vertices and all
$15$ quadratic vertices. An identically zero component is omitted
from this greatest common divisor. There are no excluded parameter
values, since every $a_i$ is constant and nonzero.
For $G_4$ nonvanishing also follows immediately because
$2x^2+2y-3$ is odd at every integer pair.

\emph{Assumption~\ref{ass:cubic-charts}: parametrizations.}
Take the other coordinate to be $t$, solve $z=-b_i(t)/a_i$,
and use $\Delta_i=1$. These are polynomial charts with coefficients
in $K$ (in $\Q$ for $G_3,G_4$). Substitution into $H_i$ gives zero,
and the coordinate $t$ is nonconstant.

\emph{Assumption~\ref{ass:cubic-identities}: power identities.}
Compute $F_i=f\circ\sigma_i$ and $G_{ij}=H_j\circ\sigma_i$;
all $F_i$ are nonzero. For each cubic graph recover $W_i$ by
\eqref{eq:cubic-recover} and the factorization procedure in
Section~\ref{sec:cubic-local}. For a quadratic graph use
$Z^2-A_i$ and the same procedure with square roots. Retain a root
choice whose coefficient denominators are supported on $S$.
Since $\Delta_i=1$, take $n_i=0$ and compute
\[
 Q_i=\frac{W_i^{n}-c_i\prod_jG_{ij}^{\widetilde\rho_{ij}}}{F_i},
 \qquad n=3\text{ or }2.
\]
The remainder is zero for every vertex of every graph. This checks
$294$ cube identities and $15$ square identities, including the
identities at isolated vertices.

\emph{Assumption~\ref{ass:cubic-resultants}: edge resultants.}
For every printed edge compute $R_{ij}=\Res_t(F_i,G_{ij})$.
All are nonzero. Computing both orientations gives the same
nonvanishing check; only $i<j$ is required by the assumption.
Obtain the B\'ezout polynomials from the Sylvester adjugate.

\emph{Assumption~\ref{ass:cubic-primes}: exceptional primes.}
The norms of all constants $c_i$, all elimination coefficients $a_i$
and all edge resultants are supported on \eqref{eq:h34-S}.
So are the coefficient denominators in the charts, the reconstructed
$W_i$, and their quotients $Q_i$. The adjugate construction introduces
no new denominators beyond those of $F_i,G_{ij}$.
Thus all parts of Assumption~\ref{ass:cubic-primes} hold; in particular $2\in S$, as the joint form requires.

\emph{Assumption~\ref{ass:cubic-coverage}: coverage.}
Outside $S$ every $a_i$ is invertible modulo the prime under
consideration. The equation $a_i z+b_i(t)=0$ therefore gives
exactly the displayed chart, for every zero of $H_i$. In particular
it represents every common zero with $f$. Its denominator is $1$,
so no omitted points need checking. The first six assumptions now
give $\mathbf B_p=\mathbf0$ for every ordinary prime $p\notin S$.

\emph{Assumption~\ref{ass:cubic-local}: finite local bounds.}
Put
\[
 \mathbf B_p=(B^{(0)}_p,B^{(1)}_p,B^{(2)}_p;B^{(3)}_p,B^{(4)}_p)
 \in\mathcal A=\mathbb F_3^3\times\mathbb F_2^2,
\]
where the superscript specifies the graph. All coordinates are
computed at the same pair $(x,y)$.
For each $p\in S$, start with every solution of $f=0$ modulo $p$
and refine by \eqref{eq:cubic-lifting}. Compute compatible cubic
coordinates by Section~\ref{sec:cubic-local}, using each unknown
coordinate consistently throughout all three graphs.
Conjugation of a polynomial means conjugating its coefficients and
leaving $x,y$ fixed. Whenever two listed vertices are conjugates,
retain this relation between their coordinates. At an inert prime
it sends $(v,\chi)$ to $(v,-\chi)$; at a split prime it exchanges
the two prime factors and negates the character exponents. At $3$
it sends
\[
 (v,k,m,\ell)\longmapsto(v,2v-k,m,m-\ell).
\]
For a vertex with rational coefficients also retain
\eqref{eq:cubic-rational}. These relations are read directly from
the displayed polynomials, without a separately chosen pairing
of vertex indices.
For a quadratic vertex at an odd prime use its valuation modulo
$2$ and the Legendre character of its unit part. At $2$ use its
valuation modulo $2$ and odd unit modulo $8$. If the value is
zero at the current precision, allow every compatible valuation
and unit class. Refine a class unless all its possible joint
contributions belong to the specified set $C_p$. This is a finite
cover calculation, retaining every lift that satisfies $f=0$ at
the new precision.

The resulting sets $C_p$ are given in Table~\ref{tab:h34-local}.
In this table $(abc;de)$ means $(a,b,c;d,e)$, and
$\mathbf0=(000;00)$.
\begin{center}\small
\setlength{\tabcolsep}{4pt}
\begin{longtable}{@{}crrl@{}}
\caption{Complete joint local covers for \eqref{eq:H34last}.
The bound is the largest modulus used; terminal classes may stop earlier.}
\label{tab:h34-local}\\
\toprule $p$ & Bound & Classes & $C_p\subseteq\mathcal A$\\\midrule\endfirsthead
\toprule $p$ & Bound & Classes & $C_p\subseteq\mathcal A$\\\midrule\endhead
\bottomrule\endfoot
$2$ & $2^{12}$ & 60 & $\{(200;00),(200;01),(200;11),(212;01),(212;10)\}$\\[2pt]
$3$ & $3^{4}$ & 54 & $\{(011;10),(100;10),(200;10),(211;10),(222;10)\}$\\[2pt]
$5$ & $5^{3}$ & 41 & $\{(000;00),(010;00)\}$\\[2pt]
$7$ & $7^{7}$ & 519 & $\{(121;00)\}$\\[2pt]
$11$ & $11^{3}$ & 145 & $\{\mathbf0\}$\\[2pt]
$13$ & $13^{4}$ & 175 & $\{\mathbf0\}$\\[2pt]
$19$ & $19^{4}$ & 444 & $\{(010;00),(010;10),(110;00),(210;00),(210;10)\}$\\[2pt]
$31$ & $31^{3}$ & 452 & $\{\mathbf0\}$\\[2pt]
$37$ & $37^{2}$ & 221 & $\{\mathbf0\}$\\[2pt]
$43$ & $43^{2}$ & 155 & $\{\mathbf0\}$\\[2pt]
$61$ & $61^{2}$ & 244 & $\{\mathbf0\}$\\[2pt]
$67$ & $67^{2}$ & 153 & $\{\mathbf0\}$\\[2pt]
$73$ & $73^{2}$ & 160 & $\{\mathbf0\}$\\[2pt]
$97$ & $97^{2}$ & 394 & $\{\mathbf0\}$\\[2pt]
$109$ & $109^{2}$ & 225 & $\{\mathbf0\}$\\[2pt]
$379$ & $379^{2}$ & 769 & $\{\mathbf0\}$\\[2pt]
\end{longtable}
\end{center}

\emph{Assumption~\ref{ass:cubic-local}: the real place.}
For $x=-1,0,1$, the polynomials in $y$ obtained from $f$ have no
integer root. For $|x|\ge2$, any integer solution has $y>0$.
Indeed, if $x=-n\le-2$ and $y=-z\le-1$, a positive left side
in \eqref{eq:H34last} would require $z^2<n+1$, but then
$z(n+1-z^2)\le n(n+1)<n^4-4$. The cases $y=0$ or $x\ge2,y<0$
are immediate.
For each real $x\le-2$ or $x\ge2$ there is exactly one positive
root $y$ of $f=0$. To see this, write
$y(y^2+x-1)=x^4-4>0$: on the interval where the left side is
positive its derivative is $3y^2+x-1>0$. These roots form two
connected branches. On either branch the graph sum for $G_3$
does not change when an auxiliary polynomial changes sign. At
such a zero no neighbour is zero, by the nonzero edge resultants,
and the square identity makes the product of its neighbours and
its constant positive. Thus an even number of the corresponding
negative-sign terms change. Several nonadjacent zeros cause the
same cancellation separately.
At $x=10$ and $x=-10$ the positive root satisfies $21<y<22$.
The negative vertices of $G_3$ are respectively
$\{2,8\}$ and $\{1,2,3,4,6,8\}$. There are respectively one and
three edges between negative vertices, and respectively one and
three negative vertex constants at those vertices. Both sums
are even, so $B^{(3)}_\infty=0$ on both branches. For $G_4$ its
constant is $2>0$, hence $B^{(4)}_\infty=0$ identically.
The cubic coordinates are zero at infinity. Consequently take
$C_\infty=\{\mathbf0\}$, completing the joint local-bound check.

\emph{Assumption~\ref{ass:cubic-exclusion}: incompatibility.}
Add the sets in Table~\ref{tab:h34-local} componentwise, modulo
$3$ in the first three coordinates and modulo $2$ in the last two.
There are $5\cdot5\cdot2\cdot1\cdot5=250$ choices of summands,
giving $105$ distinct sums. Those whose first three coordinates
are zero are exactly
\[
 (000;01),\qquad (000;10),\qquad (000;11).
\]
In particular, none is $\mathbf0$. Thus
\[
 \mathbf0\notin C_\infty+C_2+C_3+C_5+C_7+C_{19}
             =C_\infty+\sum_{p\in S}C_p.
\]
All eight assumptions, in their joint form, have been checked.
The joint criterion excludes an integer solution of
\eqref{eq:H34last}.
\end{proof}

\section{Extensions, discussion, AI use, and open questions}\label{sec:open}

Theorem \ref{prop:table-one} finishes the analysis of Problem \ref{prob:main} for all two-variable equations of size $h\leq 34$. In fact, we have solved Problem \ref{prob:main} for all two-variable equations of size $h\leq 35$. If we identify equations that differ by permuting or renaming the variables, changing the signs of variables, or multiplying the equation by a nonzero constant, then there are exactly $243{,}145$ two-variable equations of size $h\leq 35$. For $179{,}860$ of them we found an integer solution, and for the remaining $63{,}285$ equations, we proved that no integer solutions exist. Most of the equations are solvable by known methods described in \cite[Chapter 3]{grechuk2024polynomial}. However, $71$ equations resisted these methods: $30$ equations of size $h\leq 34$ listed in Table~\ref{tab:2varH34} and $41$ equations of size $h=35$. We resolved all these equations by the methods described in this paper: $63$ by the elementary method of Section \ref{sec:method}, and $8$ equations ($5$ listed in Table~\ref{tab:remaining} and $3$ equations of size $h=35$) by reciprocity graphs: $7$ by the cubic reciprocity method described in Section~\ref{sec:graphs}, and one equation of size $h=35$ by its analogue over $\Q(i)$, based on quartic reciprocity. The repository \cite{certificates} contains detailed proofs together with complete Lean formalizations for all $71$ equations. It also contains a separate document, thousands of pages long, with recorded integer solutions and non-existence proofs for all $243{,}145$ equations. 

Our analysis shows that every two-variable equation of size $h\leq 35$ that has an integer solution in fact has one with $\min\{|x|,|y|\}\leq 767$. Taking the contrapositive, this implies the following result.

\begin{theorem}\label{th:h35}
	Let $f(x,y)$ be a polynomial with integer coefficients of size $h\leq 35$. If the equation $f(x,y)=0$ has no integer solution with $\min\{|x|,|y|\}\leq 767$, then it has no integer solutions.
\end{theorem}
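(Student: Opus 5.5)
This is a finite classification argument followed by a contrapositive. The plan is to take the complete list of two-variable equations of size $h\le 35$. Up to the symmetries that preserve both solubility and the quantity $\min\{|x|,|y|\}$ (permuting variables, changing signs of variables, multiplying by a nonzero constant), this list has $243{,}145$ equations, and each is classified as soluble or insoluble.

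First I check that the reduction to orbit representatives is legitimate. Swapping $x,y$ and negating a variable leave $\min\{|x|,|y|\}$ unchanged. Scaling the polynomial does not change its zero set. Size $h$ is invariant under all three operations, or can only decrease when a common constant factor is removed, and the smaller equation is then also in the list. So it suffices to prove the statement for one representative of each class. Polynomials of size $h\le 35$ that do not genuinely involve two variables are covered trivially: a one-variable equation with an integer root $x_0$ has the solution $(x_0,0)$, so $\min=0$.

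Next I split the list into the two outcomes recorded in the repository \cite{certificates}.

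\begin{itemize}
\item For each of the $63{,}285$ insoluble equations, the statement holds vacuously. These are handled by the standard methods of \cite[Chapter 3]{grechuk2024polynomial}, by Theorem~\ref{prop:table-one} and the analogous arguments for the $41$ equations of size $35$ via Section~\ref{sec:method}, and by the reciprocity-graph criteria of Section~\ref{sec:graphs}.
\item For each of the $179{,}860$ soluble equations, I exhibit an explicit recorded integer solution and verify by substitution that it satisfies $\min\{|x|,|y|\}\le 767$.
\end{itemize}

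The bound $767$ is simply the maximum of $\min\{|x|,|y|\}$ over the smallest recorded solutions. The conclusion is then immediate: if $f=0$ has no solution with $\min\le 767$, it is not among the soluble equations, hence it is insoluble.

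The main obstacle is completeness and correctness of the enumeration. One must be sure every polynomial with $h(f)\le35$ is generated. The generation is finite because each monomial contributes at least $2^{d_i}|a_i|$, which bounds degrees by $5$ and coefficients by $35$. One must also be sure every insolubility proof is valid; for the $71$ hard cases this is delegated to the Lean formalizations. A subtle point in the soluble case is that the recorded solution must be small in the minimum. Equations such as Booker-type examples could in principle have only huge solutions, and the computation must certify that none do, since any such equation would otherwise have been wrongly classified as insoluble. The proof therefore rests on the exhaustive verification rather than on a structural argument.
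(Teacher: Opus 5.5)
Your proposal is correct and follows the paper's own argument: an exhaustive classification of the $243{,}145$ equivalence classes into $179{,}860$ equations with a recorded solution satisfying $\min\{|x|,|y|\}\le 767$ and $63{,}285$ equations with insolubility proofs, followed by the contrapositive. The only inaccuracy is one of attribution: of the $41$ hard equations of size $35$, one is settled neither by Section~\ref{sec:method} nor by the cubic graphs of Section~\ref{sec:graphs}, but by an analogue of the graph method over $\Q(i)$ based on quartic reciprocity.
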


The constant $767$ in Theorem \ref{th:h35} is the best possible, as certified by the equation $2x^3+xy+2y^2+7=0$, whose only integer solution is $(x,y)=(-767, -21051)$.

Except for a small number of easy-to-solve quintic equations, all equations of size $h\leq 35$ have degree at most $4$. To test our methods for higher-degree equations, we also applied them to equations ordered by length \eqref{eq:ldef}, up to length $l\leq 12$, and resolved all such equations with two exceptions:
\begin{equation}\label{eq:l12open}
4y^3+xy=2x^4-1 \quad \text{and} \quad 4y^3+y=x^5+x+1.
\end{equation}
In this case, the database consists of $751{,}836$ two-variable equations; $527{,}512$ of them have integer solutions, for $224{,}322$ equations we provided insolubility proofs, and $2$ equations remain open. As expected, most of these equations are either solvable by known methods of \cite[Chapter 3]{grechuk2024polynomial}, or have size $h\leq 35$ and are covered by Theorem \ref{th:h35}. However, there are $140$ equations of length $l\leq 12$ outside of these categories. The repository \cite{certificates} contains the insolubility proofs for $138$ of them, accompanied by Lean verifications. It also contains a document, thousands of pages long, recording the results for all $751{,}836$ equations, including partial progress for the two open cases. The analogue of Theorem \ref{th:h35} is the following result.

\begin{theorem}\label{th:l12}
	Let $f(x,y)$ be a polynomial with integer coefficients of length $l\leq 12$, and suppose that $f(x,y)=0$ is not equivalent, under the transformations specified above, to either equation in \eqref{eq:l12open}. If the equation $f(x,y)=0$ either (i) has no integer solution with $\min\{|x|,|y|\}\leq 82{,}596{,}761$, or (ii) has degree at least three and has no integer solution with $\min\{|x|,|y|\}\leq 25{,}121$, then it has no integer solutions. 
\end{theorem}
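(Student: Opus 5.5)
The plan is to prove Theorem~\ref{th:l12} the same way Theorem~\ref{th:h35} follows from the classification for $h\le 35$: by the contrapositive, read off from the complete database of the $751{,}836$ equations of length $l\le 12$. Suppose $f(x,y)=0$ has length at most $12$, is not equivalent to either equation in \eqref{eq:l12open}, and has an integer solution. We must show it has a solution with $\min\{|x|,|y|\}\le 82{,}596{,}761$, and one with $\min\{|x|,|y|\}\le 25{,}121$ when $\deg f\ge3$.

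\textbf{Step 1 (reduction to representatives).} The transformations used to identify equations preserve length and the quantity $\min\{|x|,|y|\}$ of a solution. These are permutation or renaming of variables, sign changes of variables, and multiplication of the equation by a nonzero constant. Scaling cannot decrease length, since it multiplies every coefficient by the same nonzero factor. It is therefore enough to verify the bounds for one representative of each class in the database.

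\textbf{Step 2 (the insoluble classes carry no condition).} The $224{,}322$ classes proved insoluble have no integer solution at all, so the conclusion holds for them vacuously. Their proofs come from three sources: the known methods of \cite[Chapter~3]{grechuk2024polynomial}; Theorem~\ref{th:h35} for those of size $h\le35$, together with the $71$ certificates behind it; and the $138$ certificates from Sections~\ref{sec:method} and~\ref{sec:graphs} recorded in \cite{certificates}. The only excluded classes are the two equations \eqref{eq:l12open}, exactly as the hypothesis allows.

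\textbf{Step 3 (the soluble classes).} For each of the $527{,}512$ soluble classes, the database records an explicit integer solution. We take the maximum of $\min\{|x|,|y|\}$ over the recorded solutions, and separately over those classes of degree at least three. These maxima must be at most $82{,}596{,}761$ and $25{,}121$ respectively, so that an equation with no solution below the relevant bound is not soluble.

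\textbf{Main obstacle.} The difficulty lies in Step 3, not in the logic. The recorded solution must be close to minimal, and this is not automatic for equations whose smallest solutions are huge. Examples are Pell-type conics and genus-zero quadratic-in-one-variable equations, which account for the large bound $82{,}596{,}761$, since they are likely of degree $2$. For these I would use the effective algorithms cited in the introduction, such as \cite[Proposition~3.65]{grechuk2024polynomial} for equations quadratic in one variable, Runge's method, and super-elliptic methods. These algorithms determine the full solution set, or at least its smallest element. We then record a solution attaining the minimum of $\min\{|x|,|y|\}$. For degree at least three, the smaller bound $25{,}121$ should follow from the same computations restricted to those classes.
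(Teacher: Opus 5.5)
Your proposal is correct and matches the paper's argument: Theorem~\ref{th:l12} is the contrapositive of the complete classification of the $751{,}836$ equations of length $l\le 12$. In that classification, every soluble class has a recorded solution within the stated bounds (the bounds are attained by $x^2-181y^2=-1$ and $9y^2=6x^3-x-1$), and every other class apart from \eqref{eq:l12open} is certified insoluble. One small correction to your last paragraph: the recorded solutions do not need to be minimal, only to satisfy the bounds, and that is exactly what the paper's computation asserts.
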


Both constants in Theorem \ref{th:l12} are the best possible, as certified by (i) the negative Pell equation $x^2-181y^2=-1$, whose smallest integer solutions are $(x,y)=(\pm 1111225770, \pm 82596761)$, and (ii) the equation $9y^2 = 6x^3-x-1$, whose only integer solutions are $(x,y)= (25121, \pm 3250946)$. Theorem \ref{th:l12} provides a finite test for whether any given equation of length $l\leq 12$, apart from those equivalent to \eqref{eq:l12open}, has an integer solution; the test for equations of degree at least three is especially efficient.

In total, we applied our methods to $211$ equations that either have size $h\leq 35$ or length $l\leq 12$, and resisted the known methods described in \cite[Chapter 3]{grechuk2024polynomial}. We were able to solve $209$ of them ($>99\%$). In our opinion, it is even more impressive that for $174$ equations ($\approx 82.5\%$) only quadratic reciprocity was sufficient. This is consistent with statistics in Table~\ref{tab:2varH34}, for which the elementary method of Section \ref{sec:method} resolved $25$ out of $30$ equations ($\approx 83\%$). This gives some preliminary evidence that the success rate of this method does not significantly decrease if we increase the degrees and coefficients of the equations.  

Our methods are based on reciprocity arguments and are closely related to integral Brauer--Manin obstructions \cite{Manin1971}. In fact, Stoll's conjecture \cite[Conjecture 9.1]{stoll2007finite} implies that if the equation $f(x,y)=0$ describes a smooth, geometrically integral affine curve of genus at least $2$ and has no integer solutions, then this can \emph{always} be certified by an integral Brauer--Manin obstruction\footnote{Applied to the smooth projective model, the conjecture identifies its Brauer--Manin set in the modified adelic space with its finite set of rational points; imposing the integral local conditions gives the stated integral obstruction.}. Together with factorization, normalization, a separate check of singular points, and the effective methods for genera $0$ and $1$, this would give a method for solving Problem \ref{prob:main} for all two-variable equations. However, this method is impractical in full generality, because finding a sufficient finite collection of Brauer--Manin conditions can be difficult. Our methods can be interpreted as a practical version of this general approach, which works well in many examples.   

This paper is a nice example of cooperation between humans and AI. We tried to solve all equations from Table~\ref{tab:2varH34} with all available AI models, including ChatGPT 6 Astra Pro, initially with no success. After repeated attempts, the model finally solved \emph{one} equation from the table. The original solution was long and hard to read, and wasted a lot of time on trivial arguments, while the crucial identity \eqref{eq:identity} was hidden inside the proof, came out of nowhere, and was verified by direct computation. The authors then cleaned up the proof, emphasized the central importance of the identity \eqref{eq:identity}, and wrote up the method for finding such identities for other equations, in the clean form of Section \ref{sec:method}. After this method was uploaded to ChatGPT 6 Astra Pro, it was able to solve the remaining equations from Table~\ref{tab:2varH34}, some of them instantly, while others required more rounds and further hints from the authors. Sections \ref{sec:intro}, \ref{sec:method}, and \ref{sec:open} were written entirely by the authors; we only asked AI to make minor language corrections to the final text. The proofs in Sections \ref{sec:smallest}, \ref{sec:extensions}, and \ref{sec:graphs} were produced by AI, rewritten by the authors, and then AI did the language polishing. The crucial identities from Table \ref{tab:compact-norms} were found by ChatGPT 6 Astra Pro, as were the long certificates in Section \ref{sec:graphs}. It is infeasible for a human to verify the full correctness of these long certificates, so we present only proof sketches and rely on Lean formalizations \cite{certificates} to ensure correctness. The Lean certificates were produced by ChatGPT 6 Astra Pro, but some required extensive guidance over multiple rounds. We also verified the correctness of the Lean statements. Claude Opus 5.5 was used to check the identities and certificates in this paper independently, to write the self-contained proofs of the individual equations in the repository \cite{certificates}, and to proofread the final text.  

\begin{table}
	\begin{center}
		\begin{tabular}{ |c|c|c|c| } 
			\hline
			$h$ & Equation & $h$ & Equation \\ 
			\hline\hline
			$34$ & $6+x^3-y^2+y^2 z^2=0$ & $34$ &  $2+x^3+x y+y^2+y^2 z^2=0$ \\ 
			\hline
			$34$ & $6+x^3+x y^2 z+z^2=0$ &  &  \\ 
			\hline	
		\end{tabular}
		\caption{\label{tab:H34unsolved} Equations of size $h\leq 34$ for which Problem \ref{prob:main} is open.}
	\end{center} 
\end{table} 

We end this section with some open questions. After the equations from Table~\ref{tab:2varH34} were solved, the new smallest equations for which Problem \ref{prob:main} remains open are the three-variable equations of size $h=34$ listed in Table \ref{tab:H34unsolved}. If the equations are instead ordered by length \eqref{eq:ldef}, then the smallest open ones are the equations
$$
	y^2+x^3y+z^4+1=0 \quad \text{and} \quad x^3y^2=z^4+2
$$
of length $l=10$. These equations have three variables. If we restrict ourselves to two-variable equations only, then the shortest open ones are the equations \eqref{eq:l12open}, whose length is $l=12$. See \cite{grechuk2024systematic} for the continuously updated list of the smallest and shortest open Diophantine equations in various categories.

\bibliography{paper2var_main_refs}

@article{MR4610191,
 author={Majumdar, D. and Sury, B.},
 title={Fruit {Diophantine} equation},
 journal={The Mathematical Gazette}, volume={107}, number={569},
 pages={302--306}, year={2023}
}

@article{vaishya2022class,
 author={Vaishya, L. and Sharma, R.}, title={A class of fruit {Diophantine} equations},
 journal={Monatshefte f{\"u}r Mathematik}, volume={199}, number={4},
 pages={899--907}, year={2022}
}

@article{prakash2024generalized,
 author={Prakash, O. and Chakraborty, K.},
 title={Generalized fruit {Diophantine} equation and hyperelliptic curves},
 journal={Monatshefte f{\"u}r Mathematik}, volume={203}, number={3},
 pages={667--676}, year={2024}
}

@article{sahoo2024generalized,
 author={Sahoo, S. and Laishram, S.},
 title={Generalized fruit {Diophantine} equation over number fields},
 journal={Integers}, volume={26}, pages={A21}, year={2026},
 note={\url{https://doi.org/10.5281/zenodo.18305078}}
}

@book{grechuk2024polynomial,
 author={Grechuk, Bogdan},
 title={Polynomial {Diophantine} Equations: A Systematic Approach},
 publisher={Springer}, address={Cham}, year={2024},
 note={\url{https://doi.org/10.1007/978-3-031-62949-5}}
}

@article{siegel1929uber,
 author={Siegel, Carl Ludwig},
 title={{{\"U}ber einige Anwendungen diophantischer Approximationen}},
 journal={Abhandlungen der Preu{\ss}ischen Akademie der Wissenschaften,
          Physikalisch-Mathematische Klasse},
 volume={1929}, number={1}, pages={1--70}, year={1929}
}

@article{baker1970integer,
 author={Baker, Alan and Coates, John}, title={Integer points on curves of genus 1},
 journal={Proceedings of the Cambridge Philosophical Society},
 volume={67}, pages={595--602}, year={1970},
 note={\url{https://doi.org/10.1017/S0305004100045904}}
}

@article{faltings1983endlichkeitssatze,
 author={Faltings, Gerd},
 title={{Endlichkeitss{\"a}tze f{\"u}r abelsche Variet{\"a}ten {\"u}ber Zahlk{\"o}rpern}},
 journal={Inventiones Mathematicae}, volume={73}, pages={349--366}, year={1983},
 note={\url{https://doi.org/10.1007/BF01388432}}
}

@misc{stoll2026determining,
 author={Stoll, Michael},
 title={Determining the rational points on a curve of genus 2 and {Mordell--Weil} rank 1},
 year={2025}, note={arXiv:2509.24604, \url{https://arxiv.org/abs/2509.24604}}
}

@article{grechuk2026cubic,
 author={Grechuk, Bogdan},
 title={Cubic {Diophantine} equations: integer solutions beyond direct search},
 journal={The American Mathematical Monthly}, year={2026}, note={To appear}
}

@article{grechuk2026shortest,
 author={Grechuk, Bogdan and Ratcliffe, Ashleigh},
 title={On the shortest open cubic equations},
 journal={International Journal of Number Theory},
 volume={22}, number={9}, pages={2117--2127}, year={2026},
 note={\url{https://doi.org/10.1142/S1793042126501058}}
}

@misc{grechuk2026polynomial,
 author={Grechuk, Bogdan and Agbanwa, Jamal},
 title={On the polynomial values represented by {binary} quadratic forms},
 year={2026}, note={arXiv:2607.06627, \url{https://arxiv.org/abs/2607.06627}}
}

@book{ireland1990classical,
 author={Ireland, Kenneth and Rosen, Michael},
 title={A Classical Introduction to Modern Number Theory},
 edition={Second}, series={Graduate Texts in Mathematics}, volume={84},
 publisher={Springer}, address={New York}, year={1990}
}

@book{serre1973course,
 author={Serre, Jean-Pierre}, title={A Course in Arithmetic},
 series={Graduate Texts in Mathematics}, volume={7},
 publisher={Springer}, address={New York}, year={1973}
}

@inproceedings{moura2021lean,
 author={de Moura, Leonardo and Ullrich, Sebastian},
 title={The {Lean} 4 Theorem Prover and Programming Language},
 booktitle={Automated Deduction -- CADE 28},
 series={Lecture Notes in Computer Science}, volume={12699},
 pages={625--635}, year={2021}, publisher={Springer},
 note={\url{https://doi.org/10.1007/978-3-030-79876-5_37}}
}

@inproceedings{mathlib2020,
 author={{The mathlib Community}}, title={The {Lean} Mathematical Library},
 booktitle={Proceedings of the 9th ACM SIGPLAN International Conference on
 Certified Programs and Proofs}, pages={367--381}, year={2020},
 note={\url{https://doi.org/10.1145/3372885.3373824}}
}

@techreport{bosma2005cubic,
 author={Bosma, Wieb},
 title={Cubic reciprocity and explicit primality tests for {$h\cdot3^k\pm1$}},
 institution={Department of Mathematics, University of Nijmegen},
 number={05.15}, year={2005},
 note={\url{https://www.math.ru.nl/~bosma/pubs/cubicreport.pdf}}
}

@book{cox2015ideals,
 author={Cox, David A. and Little, John and O'Shea, Donal},
 title={Ideals, Varieties, and Algorithms}, edition={Fourth},
 publisher={Springer}, year={2015}
}

@misc{certificates,
 author = {Bogdan Grechuk and Eugene Go},
 title = {{Diophantine} certificates},
 year = {2026},
 howpublished = {GitHub repository, \url{https://github.com/II-eugene-II/diophantine-certificates}},
 url = {https://github.com/II-eugene-II/diophantine-certificates},
 note = {Complete proofs and {Lean}~4 formalizations. Accessed: 12 October 2026}
}

@inproceedings{Manin1971,
 author    = {Manin, Yuri I.},
 title     = {Le groupe de {Brauer--Grothendieck} en
  g{\'e}om{\'e}trie diophantienne},
 booktitle = {Actes du Congr{\`e}s International
  des Math{\'e}maticiens (Nice, 1970)},
 volume    = {1},
 pages     = {401--411},
 year      = {1971},
 publisher = {Gauthier-Villars},
 address   = {Paris}
}

@article{stoll2007finite,
 title={Finite descent obstructions and rational points on curves},
 author={Stoll, Michael},
 journal={Algebra \& Number Theory},
 volume={1},
 number={4},
 pages={349--391},
 year={2007},
 publisher={Mathematical Sciences Publishers}
}

@misc{grechuk2024systematic,
 title={A systematic approach to {Diophantine} equations: open problems},
 author={Grechuk, Bogdan},
 year={2024},
 note={arXiv:2404.08518, \url{https://arxiv.org/abs/2404.08518}}
}
\bibliographystyle{plain}

\end{document}